\documentclass[10pt,a4paper,twoside]{amsart}

\usepackage{amsfonts, amssymb, amsmath, amsthm, bm}
\usepackage{mathrsfs} 
\usepackage{latexsym}
\usepackage{enumerate}
\usepackage{multicol}
\usepackage{verbatim}
\usepackage{dsfont}
\usepackage{subfig}

\usepackage{url}
\usepackage{csquotes}
\usepackage{graphicx}
\usepackage{epstopdf}

\usepackage[colorlinks=true]{hyperref}
\hypersetup{citecolor=blue, linkcolor=blue}
\usepackage{mathabx} 

\newtheorem{thm}{Theorem}[section]
\newtheorem{lem}[thm]{Lemma}
\newtheorem{cor}[thm]{Corollary}

\newcommand{\Z}[1]{\mathbb{Z}/#1\mathbb{Z}}

\def\order{\asymp}
\DeclareMathOperator{\Log}{log}

\DeclareMathOperator{\sgn}{sgn}

\def\Kcal{\mathcal{K}}

\def\Oc{\mathcal{O}}
\def\Ocal{\mathcal{O}}

\def\1{\mathds{1}}

\def\K{\mathcal{K}}

\def\mode{\mathbin{\,\textrm{mod}^*}}
\def\mod{\mathbin{\,\textrm{mod}\,}}

\DeclareMathOperator{\Si}{Si}
\DeclareMathOperator{\sinc}{sinc}
\newcommand{\myc}{\mathfrak{c}}

\title{The weighted large sieve through Parseval%
}
\author{Olivier Ramar\'e}
\address[O. Ramar\'e]{CNRS/ Institut de Math\'ematiques de Marseille, Aix 
 Marseille Universit\'e, U.M.R. 7373, Site Sud, Campus de Luminy, Case 907, 
 13288 
 Marseille Cedex 9, France.}
\email{olivier.ramare@univ-amu.fr}

\begin{document}

\subjclass[2010]{Primary: 11N13, 11N35, 11N36, Secondary: }

 \keywords{Large sieve inequality, Weighted large sieve}

\maketitle
\begin{abstract}
   We modify the approach to the arithmetical form of the large sieve
   by relying on the Parseval identity rather than on an approximate
   Bessel inequality and as a consequence, improve on the weighted
   large sieve inequality. We compute in passing the $L^2$-norm of the
   Beurling-Selberg majorant of the characteristic function of an interval. 
 \end{abstract}


\section{Introduction and results}

Since their discovery by V.~Brun in~\cite{Brun*19, Brun*19b}, 
sieve methods have proved very efficient to \emph{bound above} quantities
defined by the exclusion of some or several residue classes. Sieve
approaches can be classified in three main categories: the combinatorial
sieves, the Selberg (or $\Lambda^2$) sieve and the large
sieves. Gallagher's larger sieve introduced in~\cite{Gallagher*71}
remains an isolated tool, though a powerful one where it applies. We
are concerned in this paper with the large sieve that
arose from the work of Yu.\,V.~Linnik
in~\cite{Linnik*42}; we refer the readers to the
monographs~\cite{Montgomery*71} by H.\,L.~Montgomery
and~\cite{Ramare*06} for more historical details.
As explained by H.\,L.~Montgomery in~\cite{Montgomery*78}, the
denomination \emph{large sieve} or \emph{large sieve inequality} usually refers to an inequality on a
finite set of points. We proposed in~\cite{Ramare*26-1} a different
approach by relying on the Parseval identity on
$\mathbb{R}/\mathbb{Z}$ and recovered in this way a slightly weaker form of what
is known as \emph{the arithmetical form of the large sieve} or
\emph{Montgomery's sieve}; we also proved that the
trigonometric polynomial of a sifted sequence, if large, has to have a sharp peak
around the origin. See Lemma~\ref{ManypsL20} below.

In the present paper, we use the Parseval identity in a different
manner and obtain another arithmetical form of the weighted large
sieve inequality, which, in some cases, is stronger than the earlier one
contained in~\cite{Montgomery-Vaughan*73} by H.~Montgomery \&
R.\,C.~Vaughan. To do so, and as a sidedish, we compute in
Theorem~\ref{myNorm} the $L^2$-norm of the Beurling-Selberg majorant
of the characteristic function of an interval. In many emblematic
problems, this already enables us to sieve up to $Q=\sqrt{N}$. We next incorporate the
sharp-peak property in our method, obtaining a 
quantitative improvement of our result.

\subsection*{A challenge}
Let us start this paper by an upper bound for the number of twin primes in an
interval, i.e. a very special result
on which one may compare different methods.
A striking consequence of the Montgomery \& Vaughan weighted large
sieve is a sharp bound in this case, see \cite{Siebert*76} by H.~Siebert and
Lemma~5 in~\cite{Riesel-Vaughan*83} by H.~Riesel \&
R.\,C.~Vaughan. Here is what we may obtain now.
\begin{thm}
  \label{p+2}
  When $N$ is large enough, we have
  \begin{equation*}
    \sum_{\substack{M<p\le M+N\\ \text{$p+2$ is prime}}}\mkern-10mu1
    \le \frac{8C N}{(11.57+\log N)\log N},
    \qquad
    C = 2\prod_{p\ge3}\frac{p(p-2)}{(p-1)^2}=1.32032\cdots
  \end{equation*}
\end{thm}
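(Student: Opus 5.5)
Theorem~\ref{p+2} will be deduced from the weighted large sieve inequality proved later in the paper, the one that incorporates the sharp-peak property of Lemma~\ref{ManypsL20}, applied with $Q=\sqrt{N}$ up to a lower-order correction. The integers $n\in(M,M+N]$ with $n$ and $n+2$ both prime (and $n>\sqrt{N}+2$) avoid the residue classes $0$ and $-2$ modulo every prime $p\le\sqrt{N}$, $p\ge3$, and the class $0$ modulo $2$. So $\omega(2)=1$ and $\omega(p)=2$ for $p\ge3$. The primes below $\sqrt{N}$ contribute only $O(\sqrt N)$, which is negligible.

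In its weighted form, the large sieve bounds the sifted count by $(N+\text{error})/G(Q)$, where
\[
G(Q)=\sum_{q\le Q}\mu^2(q)\prod_{p\mid q}\frac{\omega(p)}{p-\omega(p)}
\]
and the weight depends on $q$. The key point is that the Parseval approach gives a majorant whose $L^2$-norm is computed exactly by Theorem~\ref{myNorm}, so the effective length in the denominator is $N$ plus a constant multiple of the relevant $Q$-dependent term, rather than $N+Q^2$. First, I would write out the resulting bound in the form
\[
\#\{\text{sifted}\}\le\frac{N}{\sum_{q\le Q}\mu^2(q)h(q)\,w(q/Q)},
\]
where $h(q)=\prod_{p\mid q}\omega(p)/(p-\omega(p))$ and $w$ is the explicit weight produced by the method.

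Second, I would evaluate this denominator asymptotically for twin primes. For $h(q)$ supported on odd-$2$-adic structure, the generating Dirichlet series is $\sum\mu^2(q)h(q)q^{-s}=\zeta(s)^2H(s)$, with $H$ holomorphic and $H(1)$ expressible through $C$. A standard contour integral, or Selberg's elementary evaluation, gives
\[
\sum_{q\le x}\mu^2(q)h(q)=\frac{1}{2C}\,x\bigl(\log x+c_0\bigr)+O(x^{1/2+\epsilon}),
\]
for an explicit constant $c_0$ involving Euler's constant and $\sum_p\log p/\ldots$ terms. Here the normalisation of $C$ is what makes the leading coefficient $1/(2C)\cdot\ldots$ produce $8C$ in the end. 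Integrating this against the weight $w(q/Q)$ then gives
\[
\frac{Q}{2C}\bigl(\alpha\log Q+\beta\bigr)(1+o(1)),
\]
with explicit constants $\alpha,\beta$ determined by moments of $w$.

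Third, I would choose $Q=\sqrt{N}$ times a constant $\kappa$ that is optimised, which is legitimate since the inequality holds for all $Q$. Substituting, and using $\log Q=\frac12\log N+\log\kappa$, the bound becomes $\frac{8CN}{\log N\,(\log N+A)}$. The constant $A$ collects $c_0$, the moment constants of $w$, and $2\log\kappa$. The final step is a numerical computation showing that the optimised $A$ exceeds $11.57$. This requires computing the prime-sum constants in $c_0$ to enough precision, which is routine with rapidly convergent Euler products.

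The main obstacle is the second and third steps combined: the leading term must come out with exactly the constant $8C$. This needs the interaction of the weight from Theorem~\ref{myNorm} and the sharp-peak gain to give exactly the same main term as Montgomery--Vaughan, namely a denominator of $\tfrac{1}{8C}(\log N)^2$ at leading order. The gain must then appear only in the secondary constant. I would first verify the leading coefficient with a crude choice of $w$ and $\kappa=1$, and only then optimise $\kappa$ and track the secondary terms carefully to reach $11.57$.
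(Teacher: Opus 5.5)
Your overall route is the paper's: the statement comes from Theorem~\ref{bettere} with $\kappa=2$ and $L(Q)=\sum_{q\le Q}\mu^2(q)\prod_{p\mid q,\,p>2}\frac{2}{p-2}=\frac{1}{2C}\bigl((\log Q)^2+A_3\log Q+A_4\bigr)+O(Q^{-1/3})$ (Riesel--Vaughan). This gives $c_0=A_3=6.02347\ldots$, and one takes $Q=x\sqrt N$ with $x=1.25$ ($x$ is your scaling constant).

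Your second step, however, is wrong as written. Since $h(p)=2/(p-2)\sim 2/p$, the Dirichlet series is $\zeta(s+1)^2H(s)$, not $\zeta(s)^2H(s)$. Hence $\sum_{q\le x}\mu^2(q)h(q)$ grows like $(\log x)^2$, not like $x\log x$. With your asymptotic the denominator would have size $Q\log Q\asymp\sqrt N\log N$, which contradicts the shape $8CN/(\log N(\log N+A))$ you then write down. What grows like $x\log x$ is $\sum_{q\le x}q\,\mu^2(q)h(q)\sim\frac1C\,x\log x$ (with $\frac1C$, not $\frac1{2C}$: the factor $\kappa=2$ matters). That is the sum Lemma~\ref{density+} evaluates, with $f(q)=qg(q)$, to handle the weights.

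Concretely, $Z\le N(1+o(1))/H$ with $H=\sum_{q\le Q}g(q)\theta_q^2$, where $\theta_q=1-\frac{2q(q+Q)}{\pi^2N}M\bigl(\frac{N}{q(q+Q)}\bigr)$ for $q\ge Q_0$ and $M$ is as in \eqref{defM}. This weight, produced by Lemma~\ref{Out2} from the regularity Lemma~\ref{ManypsL20}, is what your unspecified $w$ has to be, and it forces $x\le\pi/2$. The unweighted sum supplies the whole $(\log Q)^2$ term, so the leading coefficient is automatic rather than the main obstacle. The weights only move the coefficient of $\log N$:
\[
8CH=(\log N)^2+2\bigl(A_3+2\sigma(x)\bigr)\log N+o(\log N),\qquad
\sigma(x)=\log x-\tfrac{4x^2}{\pi^2}K_1(x)+\tfrac{4x^4}{\pi^4}K_2(x).
\]

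With this in hand, your final step cannot succeed: the optimised constant does not exceed $11.57$. At $x=1.25$ the paper finds $\sigma=-0.2373\ldots$, and a direct evaluation of $K_1,K_2$ confirms this to about $10^{-3}$. Other admissible $x$ come nowhere near the value $\sigma\approx-0.119$ that $11.57$ would require. Hence $A=2A_3+4\sigma\approx12.047-0.949\approx11.10$.

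The paper's closing line writes $2(A_3-2\times0.238)=11.570$, but that expression equals $11.095$. The number $11.57$ is $2A_3-2\times0.238$, i.e.\ the correction counted once, as if $\kappa=1$. The factor $2\kappa=4$ in front of $\sigma$ is confirmed by the paper's other numbers:
\begin{itemize}
\item with the Montgomery--Vaughan denominator $N+\frac32 qQ$, the same bookkeeping gives $2A_3+4\bigl(\frac12\log\frac23-\log 2\bigr)\approx 8.46$, which is Riesel--Vaughan's $8.45$;
\item with Theorem~\ref{WeightedSieve} it gives the $8.866$ of Section~\ref{CaseStudies}.
\end{itemize}
So, carried out correctly, this route proves the bound with about $11.1$ in place of $11.57$. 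The numerical verification you plan, showing $A>11.57$, will not go through.
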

\noindent
H.~Riesel \& R.\,C.~Vaughan obtained the same bound though with $8.45$
rather than~$11.57$.  Let us recall that, in this situation, the
Selberg sieve leads only to the weaker denominator
$(\log N)^2-\Ocal(\log N\log\log N)$, as is established
in Theorem~3.11 of the book~\cite{Halberstam-Richert*74} by
H.~Halberstam \&{} H.-E.~Richert.  The case $M=0$ may be treated
very differently. It has been the subject
of very refined studies and the coefficient~8 has for instance been
reduced to~$3.3996$ by J.~Wu in~\cite{Wu*04}. Further
improvements have maybe taken place in between, while $1+o(1)$ is
expected to be admissible. These methods do not apply when $M$ is
large with respect to~$N$.

\subsection*{A first general result}
In the classical sieve setting, we start from a sequence of (excluded)
subsets $\mathcal{L}_p\subset\Z{p}$ for each prime $p\le Q$, for
some~$Q$, and consider the integers~$n$ from a host sequence, say an
interval~$[M+1,M+N]$, that are such that, for every $p\le Q$, we have
$n\notin\mathcal{L}_p$. Here is a first corollary of our method.
\begin{cor}
  \label{easycor}
  Let $Z$ be the number of integers $n\in[M,M+N]$ that do not belong to any of
  the sets $\mathcal{L}_p$, for $p\le Q$. We have
  \begin{equation*}
    Z\le 1/L^*(Q)
  \end{equation*}
  where
  \begin{equation}
    \label{defLstar}
    L^*(Q) = \sum_{q\le Q}
    \frac{\mu^2(q)}{N+\myc q(q+Q)}
    \prod_{p|q}\frac{|\mathcal{L}_p|}{p-|\mathcal{L}_p|}
  \end{equation}
  where $\myc=0.898\,678\cdots$ is
  defined in \eqref{defc}.
\end{cor}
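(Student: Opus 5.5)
We derive the corollary from the weighted large sieve inequality that the paper builds on the Parseval identity. The plan has three steps: a weighted large sieve inequality, a Montgomery-type local lower bound on each sifted set, and a Cauchy--Schwarz combination.

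\textbf{Step 1 (weighted large sieve).} Let $(u_n)$ be supported on $[M,M+N]$ with $S(\alpha)=\sum_n u_n e(n\alpha)$. We aim for
\begin{equation*}
  \sum_{q\le Q}\frac{1}{N+\myc q(q+Q)}\sum_{a\mod q}{}^{\!*}\,|S(a/q)|^2\le \sum_n|u_n|^2 .
\end{equation*}
To get it, let $F$ be the Beurling--Selberg majorant of the characteristic function of an interval of length about $N$ containing the support. Its Fourier transform is supported on $[-\delta,\delta]$, and $\|F\|_2^2$ is computed in Theorem~\ref{myNorm}; this is where the constant $\myc$ from \eqref{defc} enters. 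Write $\sum_n|u_n|^2=\int_{\mathbb{R}/\mathbb{Z}}|S|^2$ by Parseval. For each $a/q$ we need a local lower bound of the form $|S(a/q)|^2\le \bigl(N+\myc q(q+Q)\bigr)\int_{I_{a/q}}|S(\alpha)|^2\,d\alpha$, where the $I_{a/q}$ are disjoint arcs around the Farey points. The disjointness is what produces the dependence on $q$. The natural arc is $|\alpha-a/q|\le 1/(q(q+Q))$: consecutive Farey fractions of order $Q$ satisfy $|a/q-a'/q'|\ge 1/(qq')$, and this choice makes the neighbouring arcs disjoint.

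The local bound itself comes from reproducing $S(a/q)$ from values of $S$ near $a/q$. We have $S(a/q)=\int \hat K(\beta)\,S(a/q+\beta)\,d\beta$ for any kernel $K$ with $K(n)=1$ on the support, and by Cauchy--Schwarz this gives $|S(a/q)|^2\le \|\hat K\|_2^2\int|S(a/q+\beta)|^2\,d\beta$ over the support of $\hat K$. Taking $K=F$ with $\delta=1/(q(q+Q))$ gives $\|\hat K\|_2^2=\|F\|_2^2=N+\myc\,\delta^{-1}$, which is exactly $N+\myc q(q+Q)$. Summing over the disjoint arcs then yields the inequality.

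\textbf{Step 2 (local sieve bound).} For the sifted sequence we take $u_n=1$ on the sifted set, so that $Z=\sum_n u_n$. Montgomery's classical local argument gives, for squarefree $q$,
\begin{equation*}
  \sum_{a\mod q}{}^{\!*}\,|S(a/q)|^2\ge Z^2\prod_{p|q}\frac{|\mathcal{L}_p|}{p-|\mathcal{L}_p|}.
\end{equation*}
This holds for prime $q$ by Cauchy--Schwarz over the residue classes that are not excluded, and it extends multiplicatively by the Chinese remainder theorem.

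\textbf{Step 3 (conclusion).} Insert Step 2 into Step 1. The left-hand side is at least $Z^2L^*(Q)$, while the right-hand side equals $Z$, so $Z\le 1/L^*(Q)$.

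The main obstacle is the local step in Step 1. The shift $\delta$ must be chosen depending on $q$, and the resulting arcs must be checked to be genuinely disjoint. We must also make sure the majorant identity $\|F\|_2^2=N+\myc/\delta$ from Theorem~\ref{myNorm} applies exactly, with the interval length normalised to $N$ and not $N+1$. The closed interval $[M,M+N]$ contains $N+1$ integers, so the majorant should be taken for an interval of length $N$ with its endpoints at these integers; this is why the bound reads $N$ rather than $N+1$.
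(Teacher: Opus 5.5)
\textbf{There is a genuine gap in Step~1.} Your reproducing formula $S(a/q)=\int\hat K(\beta)S(a/q+\beta)\,d\beta$ needs $K(n)=1$ at every integer $n$ of the support (up to the reflection $n\mapsto -n$). In other words, it needs an \emph{interpolating} kernel. The function whose $L^2$-norm Theorem~\ref{myNorm} bounds is the Beurling--Selberg \emph{majorant} $F=C_{[M,M+N],\delta}$. It only satisfies $F(n)\ge 1$ on the support, with strict inequality in general. With $K=F$ the integral equals $\sum_n u_nF(n)e(na/q)$, not $S(a/q)$, and for arbitrary complex $u_n$ nothing relates the two.

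So Step~1, stated for arbitrary coefficients, is not established by your argument. Moreover, the local bound you derive it from is false in general. For fixed $N$ and $\delta\to0$, let $A$ be the best constant in $|S(0)|^2\le A\int_{-\delta}^{\delta}|S|^2$ over $S(\beta)=\sum_{0\le n\le N}u_ne(n\beta)$. After rescaling, this space tends to the polynomials of degree at most $N$ on $[-1,1]$, so $\delta A\to\sum_{k\le N}(k+\frac12)P_k(0)^2$, where the $P_k$ are the Legendre polynomials. This limit is already $\frac98>\myc$ for $N=2$ and grows like $N/\pi$. The corollary puts no restriction on $Q$, so the regime where $\delta_qN$ is small does occur. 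The paper never proves a weighted large sieve for general coefficients, and its proof of Theorem~\ref{WeightedSieve} uses $u_n\ge0$ precisely at this point (Lemma~\ref{scalSJQ}).

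\textbf{How to repair it.} Keep the majorant weights and let positivity absorb them. Take $u_n=1$ on the sifted set, $F_q=C_{[M,M+N],\delta_q}$ with $\delta_q^{-1}=q(q+Q)$, and $S_q(\alpha)=\sum_n u_nF_q(n)e(n\alpha)$.

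Then $S_q(a/q)=\int\hat F_q(t)S(a/q-t)\,dt$. By Cauchy--Schwarz, Plancherel and Theorem~\ref{myNorm} (which gives only an upper bound for $\|F_q\|_2^2$, and that is all you need), we get $|S_q(a/q)|^2\le (N+\myc q(q+Q))\int_{|t|\le\delta_q}|S(a/q+t)|^2\,dt$.

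The sequence $(u_nF_q(n))$ is non-negative and still supported on the sifted set. Montgomery's inequality behind your Step~2 applies to any sequence with that support. It gives $\sum_{a\mode q}|S_q(a/q)|^2\ge h(q)S_q(0)^2\ge h(q)Z^2$, where $h(q)=\mu^2(q)\prod_{p|q}|\mathcal{L}_p|/(p-|\mathcal{L}_p|)$ and $S_q(0)=\sum_n u_nF_q(n)\ge Z$.

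Divide by $N+\myc q(q+Q)$ and sum over $q\le Q$. Your disjointness check of the arcs is correct, so the arcs contribute at most $\int_0^1|S|^2=Z$. This gives $Z^2L^*(Q)\le Z$, which is the corollary.

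This repaired argument is the paper's proof in substance, with three substitutions. Your Step~2 inequality is replaced there by the exact identity of Theorem~\ref{L2S0}. Your arcwise Cauchy--Schwarz is replaced by a single one against the global approximant $J_Q$ of \eqref{defJQ} (Lemmas~\ref{normJQ} and~\ref{scalSJQ}), where the majorant property and $u_n\ge0$ are used exactly as above. The weights $1/(N+\myc q(q+Q))$ come out of the optimal choice of $\theta_q$.
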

\noindent
This may be compared with Corollary~1
of~\cite{Montgomery-Vaughan*73} by H.~Montgomery \& R.\,C.~Vaughan where
the coefficient~$\myc q(q+Q)$ is replaced
by~$\frac32 qQ$.
Our expression is better when~$q$ is small but worse when~$q$ is
large. Two examples are examined in Section~\ref{CaseStudies}: when tried on the Brun-Titchmarsh Theorem, we obtain a result
of lesser quality than the one in~\cite{Montgomery-Vaughan*73}, while
when tried on the case of twin primes, we get a better result, see
Section~\ref{CaseStudies}.
E.~Preissmann in~\cite{Preissmann*84} improved this $3/2$ to
$\sqrt{1+\frac23\sqrt{6/5}}=1.315\dots$ and this appears to be the
last improvement on this matter, though the constant~$1$ is
conjectured to be admissible.
Montgomery \& Vaughan
in~\cite{Montgomery-Vaughan*73} use a weighted version of
Hilbert's inequality. We refer the readers to \cite{Yangjit*23} by
W.\,Yangjit \index{Yangjit@Yangjit,~Wijit} and to
\cite{Carneiro-Littman*24} by E.\,Carneiro \& F.\,Littmann for recent
work on this inequality.
If the weighted large sieve inequality were true with the
$3/2$ reduced to~1, we could modify the proof of Lemma~5
of~\cite{Riesel-Vaughan*83}, select $z=\sqrt{x}$, and conclude that
the 11.45 of Theorem~\ref{p+2} could only be replaced by $A_6(1)=2A_4-4\log
2=9.27\cdots$, in this paper's notation,

Let us finally recall that improving on Corollary~\ref{easycor} may have
consequences on the location of the possible Siegel zero, see for
instance~\cite{Motohashi*79} by Y.\,Motohashi or
\cite{Ramachandra-Sankaranarayanan-Srinivas*96} by
K.~Ramachandra, A.~Sankaranarayanan \&{} K.~Srinivas.

\subsection*{On the method}
As explained in detail in~\cite{Ramare*26-1}, the arithmetical usage
of the large sieve relies on a local (i.e. modulo~$q$) lower bound
coupled to a global upper bound. 
We equally rely on an equality for the lower bound, but one that is stronger
than the usual $L^2$ lower bound, though still very close to it.
As for the upper bound, and as in~\cite{Ramare*26-1}, we rely on the
Farey dissection of the unit interval, as in the circle method.
Coupled with a result (Theorem~\ref{myNorm}) on a smoothing function,
Theorem~\ref{WeightedSieve} (and consequently Corollary~\ref{easycor})
will follow.

This first proof is rather self-contained, and exhibits the
basic mechanism. In a second step, and on using stronger regularity
assumptions on our sieving conditions, we improve on this first result
by using the $L^2$-bound proved in the first paper~\cite{Ramare*26-1}
of this series.

\subsection*{A more general result}
We presented sieving in the first paragraph through an exclusion
hypothesis. This approach, though natural and historically older, 
leads to difficulties. It is better to say that
$n\in\Kcal_p=\Z{p}\setminus \mathcal{L}_p$.
This is for instance the
viewpoint adopted by M.\,N.~Huxley in his book~\cite{Huxley*72-2} and
by the present author in~\cite{Ramare*06, Ramare*10, Ramana-Ramare*25}.

Let us present the sieving situation from scratch.  A subset
$\K_q\subset\Z{q}$ is said to be \emph{multiplicative}\footnote{In
  earlier works, I used \emph{multiplivatively split} instead of the
  simpler \emph{multiplicative}.} if, when the decomposition of $q$ in
prime factors reads
\begin{equation*}
  q=p_1^{e_1}p_2^{e_2}\cdots p_r^{e_r},\quad (\forall i\neq j, \quad p_i\neq p_j),
\end{equation*}
and the Chinese Remainder Map is defined by
\begin{equation*}
  \sigma:
  \begin{array}[t]{rcl}
    \Z{q}&\rightarrow&{\displaystyle \prod_{1\le i\le r}\Z{p_i^{e_i}}}\\
    x&\mapsto&\bigl(x\mod p_i^{e_i}\bigr),
  \end{array}
\end{equation*}
we have the property
$
  \sigma^{-1}\bigl(\sigma\bigl(\K_q\bigr)\bigr)=\K_q$.
This is often written in the shorter form
 $ \K_q=\prod_{1\le i\le r}\K_{p_i^{e_i}}$.
We further say that the sequence $(\Kcal_q)_{q\le Q}$ is
\emph{consistent} when $\Kcal_q/d\mathbb{Z}=\Kcal_d$ whenever $d|q$.
It may be expedient to restrict the modulus~$q$ to square-free values,
it would solely be more difficult to write, as we should write
\begin{equation*}
  (\Kcal_q)_{q\le Q, \text{$q$ square-free}}.
\end{equation*}
We finally say that the \emph{Johnsen-Gallagher condition} 
  holds
  whenever
  \begin{equation}
    \label{jonhsen}
    \forall d|q,
    \forall y\in\K_d,\quad\#\{x\in\K_q:x\equiv y[d\}=|\K_q|/|\K_d|.
  \end{equation}
  This is equivalent to saying that the number of preimages in $\K_q$
  of any point~$y$ of~$\K_d$ does not depend on~$y$.
When $q$ is square-free and $\K_q$ is multiplicative, this condition
always holds.

Given a consistent multiplicative sequence $(\Kcal_q)_{q\le Q}$, we define
\begin{equation}
  \label{defg}
  g(q)=\prod_{p^\alpha\|q}
  \biggl(\frac{p^\alpha}{|K_{p^\alpha}|}
  -\frac{p^{\alpha-1}}{|K_{p^{\alpha-1}}|}\biggr).
\end{equation}

A sequence $(u_n)_{M<n\le M+N}$ of complex numbers is said \emph{have
support on $(\Kcal_q)_{q\le Q}$} whenever, when $u_n\neq0$, then $n$
belongs to every $\Kcal_q$ for $q\le Q$.
We may now state our main theorem.
\begin{thm}
  \label{WeightedSieve}
  Let $(u_n)_{M<n\le M+N}$ be a sequence of non-negative real numbers
  having support on a consistent multiplicative sequence
  $(\Kcal_q)_{q\le Q}$ that satisfies the Johnsen-Gallagher
  condition. We have
  \begin{equation*}
    \biggl| \sum_{n\le N}u_n\biggr|^2\le
    \sum_{n\le N}u_n^2/L(Q)
  \end{equation*}
  where
  \begin{equation}
    \label{defL}
    L(Q) = \sum_{q\le Q}\frac{g(q)}{N+\myc q(q+Q)}.
  \end{equation}
  The function $g$ is defined at~\eqref{defg} and the constant~$\myc$ is
  defined at~\eqref{defc}. 
\end{thm}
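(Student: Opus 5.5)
We will follow the scheme described in the introduction: a local lower bound obtained through an \emph{equality} modulo~$q$, coupled with a global upper bound coming from the Parseval identity on $\mathbb{R}/\mathbb{Z}$ applied to the trigonometric polynomial $S(\alpha)=\sum_n u_n e(n\alpha)$ multiplied by a smoothing function. This smoothing function will be a Beurling--Selberg majorant $F$ of the characteristic function of an interval, whose $L^2$-norm is computed in Theorem~\ref{myNorm}; that computation is where the constant~$\myc$ of \eqref{defc} enters.

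\textbf{Local step.} For each $q\le Q$, consider the local masses $\sum_{n\equiv b[q]}u_n$ for $b\in\K_q$. Since $(\K_q)$ is consistent, multiplicative and satisfies the Johnsen--Gallagher condition~\eqref{jonhsen}, the $u_n$ are supported on $\K_q$ modulo~$q$, and the fibres above each class of $\K_d$ have equal size. Decomposing the function $b\mapsto\sum_{n\equiv b[q]}u_n$ orthogonally along the filtration $d|q$ (the Möbius-type inversion behind \eqref{defg}) should yield an identity of the form
\begin{equation*}
  g(q)\,\Bigl|\sum_n u_n\Bigr|^2 \;=\; \sum_{a\mode q}\bigl|S(a/q)\bigr|^2\quad\text{(or a $\ge$ version)},
\end{equation*}
with $g$ as in \eqref{defg}. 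This is the standard Huxley/Johnsen--Gallagher computation, and we will use it in the stronger form announced in the introduction. In that form it is stated as an equality for $\sum_{a\mode q}|\widehat{S\ast\varphi}(a/q)|^2$, with some nonnegative modification. Concretely, we will apply it to the smoothed sequence $u_n\hat F$-twisted so that the positivity of $u_n$ is used.

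\textbf{Global step.} Take $F$ to be the Beurling--Selberg majorant of the indicator of an interval of length about $1/(q(q+Q))$ around the origin, with Fourier support in $[-N,N]$ (suitably shifted to the support of $u$). Place translates of it at each Farey point $a/q$, $q\le Q$; consecutive Farey fractions are separated by $\ge 1/(q(q+Q))$ locally, so the translated arcs are disjoint. Since $u_n\ge0$ and $F\ge \1_{\text{arc}}$, we get $|S(a/q)|^2$ essentially bounded by an average of $|S|^2$ over the arc. More precisely, we work with $S\cdot\check F$ and use that $\hat F(0)=\int F = $ arc length $+1/N$-type term.

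Summing over $a,q$ with weights $1/(N+\myc q(q+Q))$ and applying Parseval then gives $\int|S|^2=\sum u_n^2$. Combining with the local identity yields $|\sum u_n|^2 L(Q)\le\sum u_n^2$.

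\textbf{Main obstacle.} The delicate point is the precise normalisation of the denominator $N+\myc q(q+Q)$: the quantity $\|F\|_2^2$ relative to $\hat F(0)^2$ must be shown to equal exactly this shape. This is where Theorem~\ref{myNorm} is used, and optimising the interval length against $N$ must produce the additive form rather than merely an upper bound of the same order. We would first try the Cauchy--Schwarz step $|\hat{(uF)}(a/q)|^2\le\|F\|_2^2\int_{\text{arc}}|S|^2/\hat F(0)^2$ and then check that the arcs' lengths $1/(q(q+Q))$ produce exactly $\myc q(q+Q)$.
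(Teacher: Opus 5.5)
\textbf{Genuine gap.} The Beurling--Selberg majorant is on the wrong side, and that is exactly where $\myc$ must enter. You majorize the arc $|\alpha|\le\delta_q$ by an $F$ with $\hat F$ supported in $[-N,N]$. Such an $F$ is entire, so it is not supported on the arc. More importantly, $F\ge\1_{\text{arc}}$ only yields $\int|S|^2F(\cdot-a/q)\ge\int_{\text{arc}}|S|^2$. That is an \emph{upper} bound for the arc integral, never a bound of $|S(a/q)|^2$ by it. Positivity of $u_n$ cannot be used either: since $F\ge0$ one has $|\hat F(t)|\le\hat F(0)$, so $\sum_nu_n\hat F(n)\le\hat F(0)S(0)$, again the wrong direction. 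Your sentence ``since $u_n\ge0$ and $F\ge\1_{\text{arc}}$, $|S(a/q)|^2$ is bounded by an average of $|S|^2$ over the arc'' is therefore a non sequitur.

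What is needed, and what Theorem~\ref{myNorm} measures, is the majorant $C_q=C_{[0,N],\delta_q}\ge\1_{[0,N]}$ in the variable $n$, with $\hat C_q$ supported in $[-\delta_q,\delta_q]$ and $\delta_q^{-1}=q(q+Q)$. Then $\int|\hat C_q|^2=\int|C_q|^2\le N+\myc q(q+Q)$ comes out directly, with no optimisation of an interval length and no division by $\hat F(0)^2$. The supports of $\hat C_q(\cdot-a/q)$ are disjoint Farey arcs, and positivity is used exactly once, through $\sum_nu_nC_q(n)\ge\sum_nu_n$.

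\textbf{The local step is misstated.} The equality $g(q)|S(0)|^2=\sum_{a\mode q}|S(a/q)|^2$ is false in general: for $\Kcal_p=\Z{p}$ one has $g(p)=0$, while the right side need not vanish. Only the inequality holds. The paper's key ingredient is the \emph{linear} identity of Theorem~\ref{L2S0}, $\sum_{a\mode q}\hat\psi^*_q(a)\overline{S(a/q)}=g(q)\overline{S(0)}$. It holds for any sequence supported on the $\Kcal_d$, in particular $u_ne(n\beta)$, and it transports every arc back to the origin, where positivity can act.

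The paper then proceeds as follows.
\begin{itemize}
\item It sets $J_Q(\alpha)=\frac{S(0)}{N}\sum_{q\le Q}\theta_q\sum_{a}\hat\psi^*_q(a)\hat C_q(\alpha-a/q)$.
\item It gets $\int S\overline{J_Q}\ge\frac{S(0)^2}{N}\sum_q\theta_qg(q)$ from the linear identity.
\item It bounds $\|J_Q\|_2^2$ using disjointness of the arcs, $\sum_a|\hat\psi^*_q(a)|^2=g(q)$ and Theorem~\ref{myNorm}.
\item It concludes by Cauchy--Schwarz, Parseval, and the choice $\theta_q=1/(N+\myc q(q+Q))$.
\end{itemize}

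Your per-arc skeleton can be repaired along these lines. Put $V_q(\alpha)=\sum_nu_nC_q(n)e(n\alpha)=\int_{-\delta_q}^{\delta_q}\hat C_q(t)S(\alpha-t)\,dt$. Theorem~\ref{L2S0} applied to $(u_nC_q(n))$, together with Cauchy--Schwarz, gives
\begin{equation*}
  g(q)S(0)^2\le g(q)V_q(0)^2\le\sum_{a\mode q}|V_q(a/q)|^2
  \le\bigl(N+\myc q(q+Q)\bigr)\sum_{a\mode q}\int_{-\delta_q}^{\delta_q}|S(a/q+\beta)|^2d\beta .
\end{equation*}
Dividing by $N+\myc q(q+Q)$, summing over $q\le Q$, and using disjointness of the arcs and Parseval gives the theorem. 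This works only with the majorant taken in $n$, not in $\alpha$.
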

Corollary~\ref{easycor} is an immediate consequence of this result.
We presented the sieving situation by using consistent sequences and
the Johnsen-Gallagher condition. An approach closer to the one
employed by~A.~Selberg in~\cite{Selberg*76} is proposed
in~\cite{Jha-Ramana-Ramare*25}. Notice that the non-negativity is
superfluous as we may apply the result to~$(|u_n|)$ rather than to
$(u_n)$. We can even allow complex values.

\subsection*{A stronger result with more restrictive conditions}
We now come to the sharpest result of this paper. We state and prove
it on integer subsets and not on sequences as before to hopefully see
more clearly its scope. The previous setting is better when
$Q/\sqrt{N}$ remains small enough, and we want to select $Q$ as large
as possible. The question addressed is maybe best explained
on an example. Theorem~\ref{WeightedSieve} on the Brun-Titchmarsh
Theorem when $q=1$ gives an upper bound of the shape $2N/(\log N+C)$;
we are interested in increasing $C$. The property we use is that,
when this first upper bound is of the good order of magnitude, then
the set is regular, a property embedded in Corollary~2.6 of
\cite{Ramare*26-1} which is recalled below in Lemma~\ref{ManypsL20}.
\begin{thm}
  \label{better}
  Let $\mathcal{Z}\in[M,M+N]$ be a sequence of integers
  having support on a consistent multiplicative sequence
  $(\Kcal_q)_{q\le Q}$ that satisfies the Johnsen-Gallagher
  condition for $Q\le \pi \sqrt{N}/2$.
  Assume that $L(Q)$ (defined in~\eqref{defLstarstar}) satisfies
  $L(Q)=C (\log Q)^\kappa(1+\Ocal(1/\log Q))$, for some
  $C>0$ and $\kappa\ge0$ and that
  \begin{equation*}
    \max_{D\le Q}\sum_{\substack{p\ge 2,\nu\ge1\\ p^\nu\le
        D}}dg(d)/D\ll 1.
  \end{equation*}
  Then we have
  \begin{equation*}
    |\mathcal{Z}|\le
    \min\biggl(
    \frac{N}{L^{**}(Q)}
    \biggl(1+\Ocal\biggl(\frac{\sqrt{\log\log N}}{(\log
      N)^{3/2}}\biggr)\biggr)
    ,
    \frac{N}{L(\sqrt{N}(\log N)^{\frac52+\kappa})}
    \biggr)
  \end{equation*}
  where
  \begin{equation}
    \label{defLstarstar}
    L^{**}(Q) = \sum_{q\le Q}g(q)\biggl(1-\frac{2q(q+Q)}{\pi^2N}\biggr)
  \quad\text{and}\quad
    L(Q) = \sum_{q\le Q}g(q).
  \end{equation}
  The function $g$ is defined at~\eqref{defg}.
\end{thm}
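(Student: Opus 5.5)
The plan is to run the Parseval mechanism behind Theorem~\ref{WeightedSieve} again, but with the sharp-peak property of Lemma~\ref{ManypsL20} built in, so that the Farey-arc contributions are estimated by the local behaviour of $S(\alpha)=\sum_{n\in\mathcal{Z}}e(n\alpha)$ near the origin rather than by a Beurling--Selberg majorant. Write $Z=|\mathcal{Z}|$. The second bound in the minimum is the easy half. Under the hypothesis on $\sum_{p^\nu\le D}dg(d)/D$, the sum $\sum_{q\le Q}g(q)\,q(q+Q)$ is $\ll Q^2L(Q)$. Hence $L(Q)$ defined in~\eqref{defL} is at least $L(Q')/(N+\Ocal(Q'^2))$ with $L(Q')$ now the plain sum $\sum_{q\le Q'}g(q)$. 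Choose $Q'$ of size $\sqrt{N}$ times a power of $\log N$ as in the statement. Then, using $L(Q)=C(\log Q)^\kappa(1+\Ocal(1/\log Q))$ to convert the loss in $Q$ into a factor $1+\Ocal(1/\log N)$, Theorem~\ref{WeightedSieve} applied to $u_n=\1_{\mathcal{Z}}(n)$ gives $Z\le N/L(\sqrt{N}(\log N)^{5/2+\kappa})$ up to the stated form. I am not certain the exponent $\frac52+\kappa$ comes out of exactly this bookkeeping; it may instead come from choosing a cut-off where the error terms balance.

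For the main bound I first use the local lower bound. By the Johnsen--Gallagher condition and multiplicativity, for each $q\le Q$ we have
\begin{equation*}
  \sum_{a\bmod^* q}|S(a/q)|^2\ge g(q)Z^2,
\end{equation*}
as in the derivation of Theorem~\ref{WeightedSieve}; it is in fact an equality-based lower bound.

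For the global upper bound, Parseval on $\mathbb{R}/\mathbb{Z}$ gives $\int_0^1|S(\alpha)|^2d\alpha=Z$. Around each $a/q$ I take the Farey arc of half-width of order $1/(qQ)$. On that arc I compare $|S(a/q+\beta)|$ with $|S(a/q)|$. Here Lemma~\ref{ManypsL20} enters: when $Z$ is close to the conjectured size $N/L(Q)$, the polynomial $S$ is close to a Fej\'er-type peak of width $1/N$. More precisely, $|S(a/q+\beta)|^2$ is bounded below by $|S(a/q)|^2\,|\sinc(\pi N\beta)|^2$, with an $L^2$ error of relative size $\Ocal(\sqrt{\log\log N}/(\log N)^{3/2})$.

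Integrating $\sinc^2$ over $|\beta|\le 1/(qQ)$ in place of $\mathbb{R}$ loses a tail. The tail over $|\beta|>\delta$ is $\int_{|\beta|>\delta}(\pi N\beta)^{-2}\,d\beta=2/(\pi^2N^2\delta)$. The tail cutoff is the arc endpoint, and Farey arcs are asymmetric, with endpoint at distance $1/(q(q+q'))$ where $q'\le Q$. Taking the worst case $\delta=1/(q(q+Q))$ gives a loss of $2q(q+Q)/(\pi^2N)$ relative to the full mass $1/N$.

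Summing over $a$ and $q$ then yields
\begin{equation*}
  Z\ge \frac{Z^2}{N}\sum_{q\le Q}g(q)\Bigl(1-\frac{2q(q+Q)}{\pi^2N}\Bigr)\bigl(1+\Ocal(\text{err})\bigr),
\end{equation*}
that is, $Z\le N/L^{**}(Q)$. The condition $Q\le\pi\sqrt N/2$ keeps every factor $1-2q(q+Q)/(\pi^2N)\ge0$, since for $q\le Q$ we have $2q(q+Q)\le 4Q^2\le\pi^2N$.

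The main obstacle is making the peak comparison rigorous on every arc with a uniform error. Lemma~\ref{ManypsL20} only controls $S$ near $0$ in an $L^2$ sense, so I would transfer it to $a/q$ by twisting $\mathcal{Z}$ by $e(na/q)$. This twist does not preserve the sifted structure, so the regularity has to be argued via the hypothesis on $L(Q)$ together with the dichotomy: either $Z$ is already below $N/L^{**}(Q)$, or $Z$ is near-extremal and the lemma applies. I would handle this by a case split on whether $Z\ge N/L(Q)\,(1-\eta)$, and choose $\eta$ to produce the stated error term.
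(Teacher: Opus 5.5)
\textbf{There is a genuine gap at the decisive step.} You need, arc by arc, a lower bound of the shape $|S(a/q+\beta)|^2\gtrsim|S(a/q)|^2\sinc^2(\pi N\beta)$. But Lemma~\ref{ManypsL20} only describes $S$ near $0$. Twisting by $e(na/q)$ keeps the support but replaces $S(0)=Z$ by $S(a/q)$, and the lemma, which concerns the indicator of $\mathcal{Z}$ itself, then says nothing. Your case split only secures the hypothesis of Lemma~\ref{ManypsL20}; it does not move its conclusion to $a/q$.

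What you are missing is that no information at the individual points $a/q$ is needed. Twist by $e(n\beta)$ instead. The sequence $(u_ne(n\beta))$ has the same support, so Theorem~\ref{L2S0} applies to it, and for every real $\beta$
\begin{equation*}
  \sum_{a\mode q}\hat{\psi}^*_q(a)\overline{S(a/q+\beta)}=g(q)\overline{S(\beta)},
  \qquad\text{hence}\qquad
  \sum_{a\mode q}|S(a/q+\beta)|^2\ge g(q)|S(\beta)|^2
\end{equation*}
by Cauchy--Schwarz and $\sum_{a\mode q}|\hat{\psi}^*_q(a)|^2=g(q)$. All arcs of denominator $q$ thus collapse onto $[-\delta_q,\delta_q]$ with $\delta_q=1/(q(q+Q))$; this collapse is exactly how the paper uses Theorem~\ref{L2S0} in Lemma~\ref{scalSJQb}. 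Since these symmetric arcs are pairwise disjoint, your Parseval step becomes $Z\ge\sum_{q\le Q}g(q)\int_{-\delta_q}^{\delta_q}|S(\beta)|^2d\beta$.

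Even then the error term does not come for free. Set $T(\beta)=\frac{Z}{N}\sum_{n\le N}e(n\beta)$. Under (Hyp), Lemma~\ref{ManypsL20} gives $\int_{|\beta|\le(\log N)^A/N}|S-T|^2\ll N\log\log N/(\log N)^{2\kappa+1}$. Against $\int_{-\delta_q}^{\delta_q}|T|^2\asymp Z^2/N$, this is a relative $L^2$ error of $\sqrt{\log\log N/\log N}$, not $\sqrt{\log\log N}/(\log N)^{3/2}$. A triangle inequality on $[-\delta_q,\delta_q]$ would therefore swamp the relative difference, of order $1/\log N$, between $L(Q)$ and $L^{**}(Q)$.

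The discrepancy $S-T$ may only be paired with something supported on $|\beta|\ge\delta_q$, whose $L^2$-norm is $\ll\frac{Z}{N}\delta_q^{-1/2}$ rather than $\asymp Z/\sqrt{N}$. The paper achieves this because $\int_{\mathbb{R}}S\,\overline{\hat{C}_{[0,N],\xi_q^{-1}\delta_q}}=\sum_nu_nC_{[0,N],\xi_q^{-1}\delta_q}(n)\ge Z$ holds exactly, so $S-T$ only enters on $\delta_q\le|\beta|\le\xi_q^{-1}\delta_q$. In your quadratic set-up, the identity $\int_0^1T\,\overline{(S-T)}=0$ lets you push the cross term onto $|\beta|\ge\delta_q$. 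Summing the resulting errors, weighted by $g(q)\sqrt{q(q+Q)}$, is where the hypothesis $\max_{D\le Q}\sum dg(d)/D\ll1$ is really used, via Lemma~\ref{orderp} with $f(d)=dg(d)$.

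Once repaired, your Bessel-type route is a genuine alternative to the paper's. The paper correlates $S$ with the approximant $J_Q$ of~\eqref{defJQb} and concludes by Cauchy--Schwarz against Lemma~\ref{normJQb}. That argument actually closes with weights $\theta_q^2$, where $\theta_q=1-2q(q+Q)/(\pi^2N)$. Your route would naturally give the first power, as written in $L^{**}(Q)$.

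\textbf{Your derivation of the second quantity in the minimum also fails.} For $Q'=\sqrt N(\log N)^{5/2+\kappa}$ the denominators $N+\myc q(q+Q')$ in~\eqref{defL} are of size $N(\log N)^{5+2\kappa}$ when $q\asymp Q'$. So Theorem~\ref{WeightedSieve} loses a power of $\log N$, not a factor $1+\Ocal(1/\log N)$. In fact no choice of $Q'$ there beats $N/L(\sqrt N)$ by more than a factor $1+\Ocal(1/\log N)$. By contrast, when $\kappa>0$, $N/L(\sqrt N(\log N)^{5/2+\kappa})$ is smaller than $N/L(\sqrt N)$ by a factor $1-(\kappa(5+2\kappa)+o(1))\log\log N/\log N$.

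In the paper this quantity is never proved to be an upper bound. It is the threshold in hypothesis (Hyp) of Lemma~\ref{Out2}, with $A=\frac52+\kappa$; the same $A$ fixes the range $|\beta|\le(\log N)^A/N$ on which Lemma~\ref{ManypsL20} is used, and the parameters $\xi_q$. Either $Z<N/L(\sqrt N(\log N)^A)$, or (Hyp) holds. In the latter case $N/L(\sqrt N)-Z\ll N\log\log N/(\log N)^{\kappa+1}$, which is exactly what makes the right-hand side of Lemma~\ref{ManypsL20} small.

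Your closing case split is the right device, but it yields this dichotomy: $Z$ is at most the larger of the two quantities (the first one), not their minimum. Do not try to prove the second one unconditionally.
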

Under mild hypotheses, the second term in the minimum can be skipped.

\begin{thm}
  \label{bettere}
  Let $\mathcal{Z}\in[M,M+N]$ be a sequence of integers
  having support on a consistent multiplicative sequence
  $(\Kcal_q)_{q\le Q}$ that satisfies the Johnsen-Gallagher
  condition for $Q\le \pi \sqrt{N}/2$.
  Assume that $L(Q)$ (defined in~\eqref{defLstarstar}) satisfies
  $L(Q)=C ((\log Q)^\kappa+c_0+o(1))(\log Q)^{\kappa-1})$, for some
  $C>0$ and $\kappa\ge1$ and that
\begin{equation*}
  \left\{
    \begin{array}{l}
      \displaystyle
      \sum_{\substack{ p\ge2, \nu\ge1\\ p^{\nu}\le D}}
      p^{\nu}g\bigl(p^{\nu}\bigr)\Log\bigl(p^{\nu}\bigr)=
      \kappa D+\Oc(D/\Log(2D)),\qquad(D\ge1),
      \\\displaystyle
      \sum_{p\ge2}
      \sum_{\substack{\nu,k\ge 1\\ p^{\nu+k}\le D}}
      p^{k+\nu}g\bigl(p^k\bigr)g\bigl(p^{\nu}\bigr)\Log\bigl(p^{\nu}\bigr)
      \ll \sqrt{D},
    \end{array}
  \right.
\end{equation*}
  Then, when $N$ is large enough, we have
  \begin{equation*}
    |\mathcal{Z}|\le
    \frac{2^\kappa N}{(\log N)^\kappa +2(c_0-0.238\kappa)(\log N)^{\kappa-1}}.
  \end{equation*}
\end{thm}

\subsection*{Acknowledgement}
The present paper was started when the author was enjoying the
hospitality of the Anhui university in China, completed when he was enjoying the hospitality of the International Centre for
Theoretical Sciences (ICTS) in Bengaluru, India during the program - The
Classical Circle Method and the Large Sieve 2026 (code:
ICTS/CCMLSS2026/05), and finalized when he benefited from the
hospitality of the Vilnius university. Let these institutions and
people involved be warmly thanked.
Most of the computations have been run by Pari/GP~\cite{PARI-GP} and,
for the figures, either the data in \texttt{svg}-format have been
piped to Inskscape~\cite{Inkscape} for 
functions, or to Sage~\cite{sagemath} to obtain the plots.

\section{A local lower bound}
This section is devoted to proving the next lower bound. This theorem
is new only in its generality. See for instance the
proof of Theorem~6 in the book~\cite{Bombieri*74} by E.~Bombieri or
Eq.\,(8.8) in the book~\cite{Huxley*72-2} by M.\,N.~Huxley. 

Let us define, as per Eq.\,(11.13) from~\cite{Ramare*06}, the function:
\begin{equation}
  \label{defpsistarq}
  \psi^*_q(n)=\sum_{\substack{\delta|q\\ n\in\Kcal_\delta}}\mu(q/\delta)
  \frac{\delta}{|\Kcal_\delta|}.
\end{equation}
Let us collect some of its properties in a lemma.
\begin{lem}
  \label{proppsistarq}
  We have
  $\displaystyle
    \sum_{\delta|q}\psi^*_\delta(n)=\frac{q}{|\Kcal_q|}\1_{\Kcal_q}(n)=\psi_q(n),
    $
  say. Furthermore we have
  \begin{equation}
  \label{eq:28}
  \psi^*_q(n)
  =
  \sum_{a\mode q}
  \hat{\psi}^*_q(a)e(-na/q)
  \quad\text{where}\quad
  \hat{\psi}^*_q(a)
  =\frac{1}{|\Kcal_q|}
  \sum_{c\in\Kcal_q}e(ac/q).
\end{equation}
\end{lem}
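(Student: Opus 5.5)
The first identity is Möbius inversion, and the second comes from expanding the indicator of $\Kcal_\delta$ in additive characters modulo $\delta$ and then applying Möbius inversion on the Fourier side.

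\textbf{First identity.} By definition, $\psi^*_q(n)=\sum_{\delta|q}\mu(q/\delta)\psi_\delta(n)$, where $\psi_\delta(n)=\frac{\delta}{|\Kcal_\delta|}\1_{\Kcal_\delta}(n)$. The condition $n\in\Kcal_\delta$ in \eqref{defpsistarq} is exactly what the factor $\1_{\Kcal_\delta}(n)$ encodes. Möbius inversion on the divisor lattice of $q$ then gives $\sum_{\delta|q}\psi^*_\delta(n)=\psi_q(n)$ directly. Here $\psi_\delta(n)$ only depends on $n$ modulo $\delta$, so no consistency is needed at this stage.

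\textbf{Fourier expansion of $\psi_\delta$.} Since $\1_{\Kcal_\delta}(n)=\frac1\delta\sum_{c\in\Kcal_\delta}\sum_{b\bmod\delta}e(b(c-n)/\delta)$, we get
\begin{equation*}
\psi_\delta(n)=\sum_{b\bmod \delta}\frac{1}{|\Kcal_\delta|}\sum_{c\in\Kcal_\delta}e(bc/\delta)\,e(-nb/\delta).
\end{equation*}
Group the frequencies $b/\delta$ by their reduced form $a/d$ with $d|\delta$ and $(a,d)=1$. This gives
\begin{equation*}
\psi_\delta(n)=\sum_{d|\delta}\sum_{a\mode d}\Bigl(\frac{1}{|\Kcal_\delta|}\sum_{c\in\Kcal_\delta}e(ac/d)\Bigr)e(-na/d).
\end{equation*}

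\textbf{Key step: reducing the inner coefficient to modulus $d$.} We need
\begin{equation*}
\frac{1}{|\Kcal_\delta|}\sum_{c\in\Kcal_\delta}e(ac/d)=\hat\psi^*_d(a).
\end{equation*}
The sum depends only on $c$ modulo $d$. By consistency, $\Kcal_\delta$ reduced modulo $d$ is $\Kcal_d$. By the Johnsen--Gallagher condition \eqref{jonhsen}, each $y\in\Kcal_d$ has exactly $|\Kcal_\delta|/|\Kcal_d|$ preimages in $\Kcal_\delta$. Hence the sum equals $\frac{1}{|\Kcal_d|}\sum_{y\in\Kcal_d}e(ay/d)$, which is $\hat\psi^*_d(a)$. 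This is the only place the hypotheses on $(\Kcal_q)$ enter, and it is the main point to check.

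\textbf{Conclusion.} We now have $\psi_\delta(n)=\sum_{d|\delta}F_d(n)$, where
\begin{equation*}
F_d(n)=\sum_{a\mode d}\hat\psi^*_d(a)e(-na/d).
\end{equation*}
Together with the first identity, $\sum_{d|\delta}\psi^*_d=\sum_{d|\delta}F_d$ holds for every $\delta|q$. Möbius inversion then yields $\psi^*_q=F_q$, which is \eqref{eq:28}.
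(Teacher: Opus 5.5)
Your proposal is correct and follows essentially the same route as the paper. The first identity is obtained by M\"obius inversion. For the second, the indicator of $\Kcal_q$ is expanded in additive characters, the frequencies are grouped by reduced denominator, the coefficient is reduced from modulus $q$ to modulus $\delta$, and M\"obius inversion identifies each summand with $\psi^*_\delta$. The paper passes over the reduction step silently, and you make it explicit: it is where consistency and the Johnsen--Gallagher fibre count are used.
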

The function $\psi_q$ is the \emph{local model modulo~$q$} for our
sequence and $\psi^*_q$ is its ``new'' part.
It is readily shown that $\sum_{a\mode q}|\hat{\psi}^*_q(a)|^2=g(q)$,
and therefore our equality implies the inequality
\begin{equation*}
  \label{eq:31}
  \sum_{a\mode q}|S(a/q)|^2
  \ge
  g(q)
  |S(0)|^2
\end{equation*}
On reading carefully Chapter~8, and precisely Eq\,(8.4) therein, of
the book~\cite{Huxley*72-2} by M.\,N.~Huxley, the reader will discover
the very same (fundamental) identity.
\begin{proof}
  We proceed directly:
  \begin{equation*}
    \sum_{\delta|q}\psi^*_\delta(n)
    =\sum_{\delta|q}\sum_{\substack{t|\delta\\
    n\in\Kcal_t}}\mu(\delta/t)
    \frac{t}{|\Kcal_t|}
    =\sum_{\substack{t|q\\ n\in\Kcal_t}}
    \frac{t}{|\Kcal_t|}
    \sum_{\substack{\delta:t|\delta|q}}\mu(\delta/t)
    =\frac{q}{|\Kcal_q|}\1_{\Kcal_q}(n)
  \end{equation*}
  since the inner sum over $\delta$ vanishes when $t\neq q$, proving
  our first formula.
  Let us now express $\psi_q$ by its Fourier decomposition modulo~$q$. We
find that
\begin{align*}
  \psi_q(n)
  &=
    \frac{1}{q}\sum_{\delta|q}\frac{q}{|\Kcal_q|}\sum_{a\mode \delta}
    \sum_{b\in\Kcal_q}e(ab/\delta)e(-na/\delta)
  \\&=
    \sum_{\delta|q}\frac{1}{|\Kcal_\delta|}\sum_{a\mode \delta}
    \sum_{c\in\Kcal_\delta}e(ac/\delta)e(-na/\delta).
\end{align*}
By identification, or by using the Moebius inversion formula, we infer
that the summand in~$\delta$ is in fact $\psi^*_\delta(n)$. We thus get~\eqref{eq:28}.
This quantity computation may also be found in the paper~\cite{Kobayashi*73} by
I.\,Kobayashi (this is~$b_{q,a}$ therein).
The proof of our lemma is complete.
\end{proof}

\begin{thm}
  \label{L2S0}
  When $(u_n)$ has support on the consistent and multiplicative
  sequence~$(\Kcal_d)_{d|q}$ which satisfies the Johnsen-Gallagher
  condition, and with $S(\alpha)=\sum_n
  u_ne(n\alpha)$, we have
  \begin{equation}
    \sum_{a\mode q}\hat{\psi}^*_q(a)\overline{S(a/q)} = g(q)\overline{S(0)}.
  \end{equation}
\end{thm}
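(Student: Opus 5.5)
Expand $\overline{S(a/q)}=\sum_n u_n e(-na/q)$ and exchange the summations. The left-hand side becomes
\begin{equation*}
  \sum_n u_n\sum_{a\mode q}\hat{\psi}^*_q(a)e(-na/q)=\sum_n u_n\psi^*_q(n)
\end{equation*}
by \eqref{eq:28} of Lemma~\ref{proppsistarq}. The claim therefore reduces to showing that $\psi^*_q(n)=g(q)$ for every $n$ in the support of $(u_n)$, i.e.\ whenever $n\in\Kcal_d$ for all $d|q$. (The statement has $\overline{S(0)}$ on the right, so I take the $u_n$ real, as in Theorem~\ref{WeightedSieve}. For complex $u_n$ one should conjugate both sides, and the identity then reads $\sum_n\overline{u_n}\psi^*_q(n)=g(q)\overline{S(0)}$, which follows from the same pointwise fact.)

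For such $n$, every condition $n\in\Kcal_\delta$ in \eqref{defpsistarq} is satisfied, so
\begin{equation*}
  \psi^*_q(n)=\sum_{\delta|q}\mu(q/\delta)\frac{\delta}{|\Kcal_\delta|}.
\end{equation*}
This is the Möbius inverse of $h(\delta)=\delta/|\Kcal_\delta|$. Multiplicativity of the sequence gives $|\Kcal_{\delta}|=\prod_{p^\beta\|\delta}|\Kcal_{p^\beta}|$, so $h$ is multiplicative. Hence $\mu*h$ is multiplicative, and its value at $p^\alpha$ is $h(p^\alpha)-h(p^{\alpha-1})$. Comparing with \eqref{defg}, this is exactly $g(q)$.

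It remains to check consistency of notation: $|K_{p^\alpha}|$ in \eqref{defg} means $|\Kcal_{p^\alpha}|$, with the convention $|\Kcal_1|=1$. Multiplicativity, meaning $\Kcal_\delta=\prod\Kcal_{p^\beta}$ via the Chinese Remainder Map, gives the product formula for the cardinality directly. The Johnsen--Gallagher condition and consistency are not really needed for this pointwise computation. They are what makes Lemma~\ref{proppsistarq} and the Fourier expansion of $\psi_q$ behave, in particular the step passing from $\Kcal_q$ to $\Kcal_\delta$ in the Fourier coefficients, which I take as given from the lemma.

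The main care point is that $n$ lies in \emph{every} $\Kcal_\delta$ for $\delta|q$, not only in $\Kcal_q$. This holds since the support hypothesis is over all $d|q$; alternatively, consistency gives $\Kcal_q/\delta\mathbb{Z}=\Kcal_\delta$, so membership in $\Kcal_q$ implies membership in each $\Kcal_\delta$. Everything else is formal.
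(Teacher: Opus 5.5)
Your proof is correct and follows the paper's argument: expand $\overline{S(a/q)}$, apply \eqref{eq:28} to reach $\sum_n\overline{u_n}\psi^*_q(n)$, and use $\psi^*_q(n)=g(q)$ on the support. The paper simply asserts this last fact, whereas you justify it by M\"obius inversion of the multiplicative function $\delta\mapsto\delta/|\Kcal_\delta|$; note also that restricting to real $u_n$ is unnecessary, since $\overline{S(a/q)}=\sum_n\overline{u_n}e(-na/q)$ and $\overline{S(0)}=\sum_n\overline{u_n}$ make the identity hold verbatim for complex $u_n$.
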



\begin{proof}

Notice the following important property:
\begin{center}
  \sl When $n\in\Kcal_q$, we have
  $\displaystyle\psi^*_q(n)=
  g(q)$.
\end{center}
We readily compute
\begin{equation*}
  \sum_{a\mode q}\hat{\psi}^*_q(a)\overline{S(a/q)}
  =\sum_{n}\overline{u_n}\sum_{a\mode q}\hat{\psi}^*_q(a) e(-na/q)
    =\sum_{n}\overline{u_n}\psi^*_q(n)=g(q)\overline{S(0)}
\end{equation*}
by Lemma~\ref{proppsistarq}. The proof is complete.
\end{proof}

\section{A special function}
\label{sectionM}

Let us follow the classical paper \cite{Vaaler*85} by J.~Vaaler.
We first introduce some notation and define\footnote{The Fourier
  transform $\hat{f}$ is defined by $\hat{f}(t)=\int_{-\infty}^\infty f(u)e(ut)du$.}\begin{equation}
  \label{defJhat}
  \hat{J}(t)
  =
  \begin{cases}
    1&\text{if $t=0$},\\
    \pi t(1-|t|)\cot\pi t+|t|&\text{if $0<|t|<1$},\\
    0&\text{if $1\le |t|$}.
  \end{cases}
\end{equation}
As per \cite[Theorem 6]{Vaaler*85},
the function $\hat{J}$ is even, non-negative, continuously
differentiable, and strictly decreasing on~$[0,1]$.
\begin{figure}[!h]
  \includegraphics[scale=0.8]{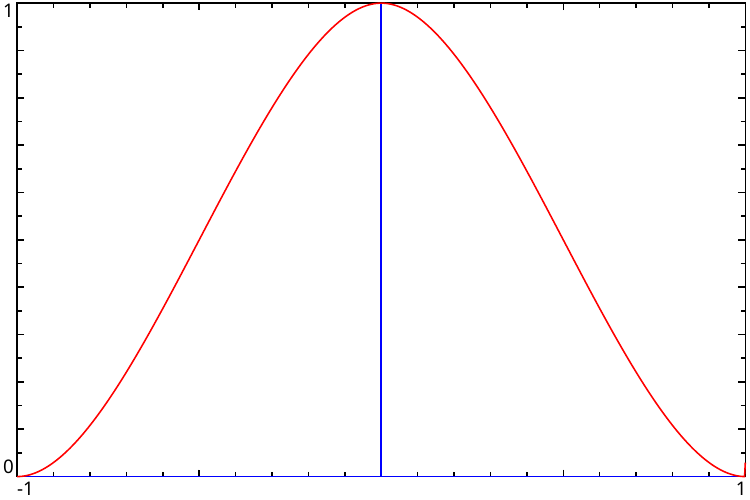}
  \caption{Plot of $\hat{J}(t)$}
  \label{PlotJhat}
\end{figure}
We continue with the definitions from the same paper:
\begin{equation}
  \label{eq:2}
  K(z)=\biggl(\frac{\sin\pi z}{\pi z}\biggr)^2,
  H(z)=\biggl(\frac{\sin\pi z}{\pi }\biggr)^2
  \biggl(\sum_{m\in\mathbb{Z}}\frac{\sgn(m)}{(z-m)^2}
  +\frac{2}{z}\biggr)
  \end{equation}
  (with $\sgn0=1$), and the Beurling-Selberg function
  $B(x)=K(x)+H(x)$. We finally set
  \begin{equation}
    \label{defCabdelta}
  2C_{[a,b],\delta}(x)
  =
  B(\delta(b-x))
  +B(\delta(x-a))\ge2\cdot\1_{x\in[a,b]}.
\end{equation}
\begin{lem}
  \label{BS}
  Let $\delta>0$ and $M\ge0$ be two parameters.
  The function $C_{[-M,M],\delta}$ is an upper bound for the
  characteristic function of $[-M,M]$.
  When $|t|\le \delta$, we have
  \begin{equation*}
    \hat{C}_{[-M,M],\delta}(t)
    =
    \delta^{-1}(1-|\delta^{-1}t|)\cos 2\pi M t
    +\frac{\hat{J}(\delta^{-1}t)}{\pi t}\sin 2\pi M t.
  \end{equation*}
  When $|t|\ge\delta$, we have $\hat{C}_{[-M,M],\delta}(t)=0$.
  We have
  \begin{enumerate}
  \item $\forall t\in\mathbb{R}$,\quad $|\hat{C}_{[-M,M],\delta}(t)|\le
    \hat{C}_{[-M,M],\delta}(0)=2M+\delta^{-1}$.
  \item For any $\xi\in(0,1]$, 
    $\forall t\ge \xi\delta$,\quad
    $|\delta \hat{C}_{[-M,M],\delta}(t)|\le
    c(\xi)=(1-\xi)+\hat{J}(\xi)/(\pi\xi)$.
  \end{enumerate}
\end{lem}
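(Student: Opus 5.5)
The plan is to use Vaaler's description of $\hat B$ and then rescale and translate. Vaaler's Theorem~6 gives the majorant $B(x)\ge \sgn(x)$, with $\hat B$ supported in $[-1,1]$. Away from the origin, in the distributional sense,
\begin{equation*}
  \hat{B}(t)=\hat{K}(t)+\hat{H}(t),
  \qquad
  \hat K(t)=\max(1-|t|,0),
  \qquad
  \hat H(t)=\frac{i\,\hat J(t)}{\pi t}
\end{equation*}
(up to the sign convention for $e(ut)$). The singular part of $\hat H$ is the principal value of $1/(\pi i t)$ that comes from $\sgn$.

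For the majorant property, write $B(\delta(b-x))+B(\delta(x-a))\ge \sgn(b-x)+\sgn(x-a)$. This sum equals $2$ on $[a,b]$ and equals $0$ outside. With $a=-M$, $b=M$, this shows that $C_{[-M,M],\delta}$ majorizes the characteristic function of $[-M,M]$.

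For the Fourier transform I would use $\widehat{f(\delta(y-x))}(t)=\delta^{-1}e(yt)\hat f(-t/\delta)$ and the analogous formula for $f(\delta(x-y))$. Summing the two terms, the even part $\hat K$ contributes
\begin{equation*}
  \tfrac12\delta^{-1}\hat K(t/\delta)\bigl(e(Mt)+e(-Mt)\bigr)=\delta^{-1}(1-|t/\delta|)\cos 2\pi Mt .
\end{equation*}
The odd part $\hat H$ contributes $\tfrac12\delta^{-1}\hat H(t/\delta)\bigl(e(Mt)-e(-Mt)\bigr)$. Since $\delta^{-1}\cdot \delta/(\pi t)=1/(\pi t)$, this equals $\hat J(t/\delta)\sin(2\pi Mt)/(\pi t)$, and the $i$ cancels against $e(Mt)-e(-Mt)=2i\sin 2\pi Mt$. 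Two consequences follow. The singular $1/t$ parts cancel, because $\sin(2\pi Mt)/t$ is regular, so the combination is a genuine $L^1$ function. And since $\hat B$ vanishes for $|t|\ge1$, the transform vanishes for $|t|\ge\delta$. The main obstacle here is bookkeeping: getting the sign conventions right so that one really gets $\cos$ with $\hat K$ and $\sin$ with $\hat J$. I would check this against the known value at $t=0$.

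For (1), the value at $0$ is $\delta^{-1}+\hat J(0)\cdot 2M=2M+\delta^{-1}$, taking the limit of $\sin(2\pi Mt)/(\pi t)$. The inequality $|\hat C(t)|\le \hat C(0)$ holds simply because $C\ge0$, so $|\hat C(t)|\le\int C=\hat C(0)$.

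For (2), write $u=t/\delta\in[\xi,1)$. Then
\begin{equation*}
  |\delta\hat C(t)|\le (1-u)+\hat J(u)\,\frac{|\sin 2\pi Mt|}{\pi u}\le (1-u)+\frac{\hat J(u)}{\pi u},
\end{equation*}
using $|\cos|\le1$ and $|\sin|\le1$. It remains to show that $u\mapsto(1-u)+\hat J(u)/(\pi u)$ is nonincreasing on $(0,1]$. Both $1-u$ and $\hat J(u)/(\pi u)$ are decreasing there, since $\hat J$ is nonnegative and decreasing and $1/u$ is positive and decreasing. Hence the maximum over $u\ge\xi$ is attained at $u=\xi$, which gives $c(\xi)$. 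For $u\ge1$ the transform is zero, so there is nothing to prove.
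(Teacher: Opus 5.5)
Your proposal is correct and follows the paper's route. The paper imports $\hat B=\hat K+\hat H$ from Vaaler (through Lemma~5.1 of the first paper of the series), rescales, and gets (2) from the monotonicity of $\hat J$; you additionally spell out the majorization from $B\ge\sgn$ (at $x=\pm M$ use $B(0)\ge 1$, by continuity) and derive (1) from $C\ge 0$. The only blemish is the sign bookkeeping you anticipated: your displayed $\hat H(t)=i\hat J(t)/(\pi t)$ uses the paper's $e(ut)$ convention, while the factor $e(Mt)-e(-Mt)$ and the principal value of $1/(\pi i t)$ come from the opposite convention, so taken literally the two factors of $i$ give $-\hat J(t/\delta)\sin(2\pi Mt)/(\pi t)$. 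Within either single convention the sign is $+$, as your check at $t=0$ confirms.
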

\noindent
Notice that $\hat{C}_{[-M,M],\delta}(t)=\delta^{-1}\hat{C}_{[-\delta
  M,\delta M],1}(\delta^{-1}t)$ and that
\begin{equation*}
  C_{[0,N],\delta}(t)=C_{[-N/2,N/2],\delta}(t-N/2).
\end{equation*}
\begin{proof}
  The main part of this lemma comes from Lemma~5.1
  of~\cite{Ramare*26-1} which is only a re-telling of part of the
  content of the paper~\cite{Vaaler*85} by J.\,D.~Vaaler.
  Lemma~5 of this latter paper implies in particular that
  \begin{equation*}
    2C_{[a,b],\delta}(x)\le
    2\cdot\1_{x\in[a,b]}+K(\delta(b-x))
    +K(\delta(x-a)).
  \end{equation*}
  The bound in~(2) is readily proved on noticing that $\hat{J}(\xi)$
  is even and non-increasing when $\xi\ge0$.
\end{proof}

\begin{figure}[!h]
  \includegraphics[scale=0.8]{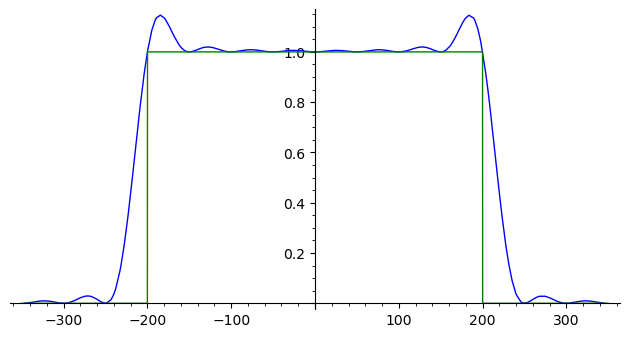}
  \caption{$C_{[-250,250], 1/50}(u)$}
\end{figure}

\section{An $L^2$-norm}

The present section is devoted to proving the next theorem.
\begin{thm}
  \label{myNorm}
  The function $C_{[-M,M],\delta}$ being taken from Lemma~\ref{BS}, we have
  \begin{equation*}
    \int_{-\infty}^{\infty}|C_{[-M,M],\delta}(t)|^2dt
    \le 2M+\myc \delta^{-1}
  \end{equation*}
  where
  \begin{equation}
    \label{defc}
    \myc = \frac43+
      \frac{2}{\pi^2}\int_{0}^1 \hat{J}(v)\hat{J}'(v)
    \frac{dv}{v}
    =0.898\,678\cdots
  \end{equation}
\end{thm}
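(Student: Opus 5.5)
By Plancherel and the support property of Lemma~\ref{BS}, $\int_{\mathbb{R}}|C_{[-M,M],\delta}|^2=\int_{-\delta}^{\delta}|\hat{C}_{[-M,M],\delta}(t)|^2dt$. I would substitute $t=\delta u$ and write $X=2\pi M\delta$, so that the explicit formula of Lemma~\ref{BS} gives
\begin{equation*}
  \int_{-\infty}^{\infty}|C_{[-M,M],\delta}|^2
  =\delta^{-1}\int_{-1}^{1}\Bigl((1-|u|)\cos Xu+\frac{\hat{J}(u)}{\pi u}\sin Xu\Bigr)^2du .
\end{equation*}
The whole task is then to show that this last integral is at most $X/\pi+\myc$, since $\delta^{-1}X/\pi=2M$. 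Because the integrand is even, I would work on $[0,1]$ and double.

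I would expand the square and linearise the trigonometric products with $\cos^2=\tfrac12(1+\cos 2Xu)$, $\sin^2=\tfrac12(1-\cos 2Xu)$ and $2\sin\cos=\sin 2Xu$. This produces three groups of terms.
\begin{enumerate}
\item The non-oscillating polynomial part contributes $\int_{-1}^1(1-|u|)^2/2\,du=1/3$.
\item The term $\hat{J}(u)^2\sin^2(Xu)/(\pi^2u^2)$ carries the main term $2M$. I would split $\hat{J}^2=1-(1-\hat{J}^2)$. The piece with $1$ is $\int_{-1}^1\sin^2(Xu)/(\pi u)^2\,du$, which equals $X/\pi$ minus the tail over $|u|>1$. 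The piece with $1-\hat{J}^2$ is integrable near $0$, because $\hat{J}(0)=1$ and $\hat{J}$ is $C^1$.
\item The remaining terms all oscillate in $X$.
\end{enumerate}

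On the non-oscillating part of $(1-\hat{J}^2)/u^2$, I would integrate by parts against $d(-1/u)$. The boundary terms are harmless since $\hat{J}(1)=0$ and $1-\hat{J}^2=O(u)$ near $0$, and the integral turns into $\int_0^1 2\hat{J}\hat{J}'\,du/u$, which is negative. This is exactly the shape of the second term of $\myc$ in~\eqref{defc}, and it is why I expect this route to reproduce the constant. The rational part $4/3$ should then come from adding up the constant contributions: $1/3$ from the polynomial square, and the contributions from the tail $\int_{|u|>1}$ and from the boundary term at $u=1$ in the integration by parts.

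The main obstacle is the oscillating terms: $\int(1-|u|)^2\cos 2Xu$, the cross term $\int(1-|u|)\hat{J}(u)\sin(2Xu)/(\pi u)$, and $-\int(1-\hat{J}^2)\cos(2Xu)/(2\pi^2u^2)$ together with its tail analogue. These must be shown to sum to something bounded by the leftover constant, uniformly in $X\ge0$. Bounding each one crudely in absolute value would lose too much. My first attempt would be to integrate each of them by parts once in $u$, gaining a factor $1/X$ and exploiting the monotonicity of $\hat{J}$ and of $1-|u|$ to fix signs. I would check that their total is non-positive at $X=0$ and tends to a limit not exceeding the claimed constant as $X\to\infty$; for instance, the cross term tends to $\tfrac12\hat{J}(0)$-type boundary contributions, which is where I expect part of the $4/3$ to arise. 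If the sign argument fails for intermediate $X$, I would fall back on a numerical verification of the bound on a compact $X$-range, combined with the asymptotic $1/X$ bounds beyond it.
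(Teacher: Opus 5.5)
\textbf{There is a genuine gap: your plan cannot establish the bound for large $X$.}

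Your decomposition is the paper's: its $N_1,N_2,N_3$ are your polynomial, cross and $\hat{J}^2$ terms, taken over $[0,1]$. Isolating $\int\sin^2(Xu)/(\pi u)^2\,du$ and integrating $(1-\hat{J}^2)/u^2$ by parts does produce the integral in $\myc$.

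Your accounting of the constant is wrong, however. The non-oscillating part of the tail over $|u|>1$ is $-1/\pi^2$ and the boundary term at $u=1$ is $+1/\pi^2$, so they cancel. The non-oscillating terms therefore total $X/\pi+\frac13+\frac{2}{\pi^2}\int_0^1\hat{J}\hat{J}'\frac{dv}{v}$. The missing $1$ is entirely the limit of the cross term $\int_{-1}^{1}(1-|u|)\hat{J}(u)\frac{\sin 2Xu}{\pi u}\,du$. This is a Dirichlet-kernel integral whose amplitude has a $1/u$ singularity, so it cannot be integrated by parts once to gain $1/X$; it must be handled through $\Si$. The oscillating terms must therefore be shown to be at most $1$, not non-positive; at $X=0$ they equal about $0.77$.

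The genuine gap is your last step. The left side minus $X/\pi$ tends to $\myc$ as $X\to\infty$, so the inequality is an equality in the limit. Hence ``the limit does not exceed the constant'', two-sided $O(1/X)$ bounds and numerics on a compact range cannot conclude. For large $X$ you must show that the deviation is \emph{negative}, with explicit remainders. In fact
\begin{equation*}
  \int_{-1}^{1}\Bigl((1-|u|)\cos Xu+\frac{\hat{J}(u)}{\pi u}\sin Xu\Bigr)^2du
  =\frac{X}{\pi}+\myc-\frac{1}{\pi X}+O(X^{-2}).
\end{equation*}
The polynomial and $\hat{J}^2$ terms exceed their limits by $\frac{1}{2X^2}\bigl(1-\frac{\sin 2X}{2X}\bigr)$ and $\frac{1}{6X^2}+O(X^{-3})$ respectively. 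The whole $-\frac{1}{\pi X}$ comes from the cross term. With $g(u)=(1-u)\hat{J}(u)$, so that $g(0)=1$, $g'(0)=-1$ and $g(1)=g'(1)=0$, the cross term equals $\frac{2}{\pi}\bigl(\Si(2X)+\int_0^1\frac{g(u)-1}{u}\sin(2Xu)\,du\bigr)$. Here $\Si(2X)=\frac{\pi}{2}-\frac{\cos 2X}{2X}+O(X^{-2})$ and the integral is $\frac{\cos 2X-1}{2X}+O(X^{-2})$.

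Bounding the two $\cos 2X$ terms separately turns $1-\frac{1}{\pi X}$ into $1+\frac{1}{\pi X}$. So fixing signs after one integration by parts cannot work. You need the paper's route for the cross term (Lemma~\ref{N2}): integrate by parts twice against $f_2=\frac{\pi}{2}-\Si$ and its primitive $F_2$. Then $g(1)=g'(1)=0$ removes the boundary terms at $u=1$, and the boundary term at $0$ gives the negative $1/X$ term. The other two pieces must be controlled to $O(X^{-2})$.

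If you follow Lemma~\ref{N2}, recheck its constants. Since $\hat{J}'(0)=0$, one has $g_2'(0)=-1$. Moreover $F_2(w)=\cos w-w\bigl(\frac{\pi}{2}-\Si(w)\bigr)$, so $F_2(0)=1$; the displayed formula for $F_2$ has a sign slip. Hence $N_2=\frac12-\frac{1}{4\pi^2\lambda}+O(\lambda^{-2}\log\lambda)$, a gain of about $0.025/\lambda$ rather than $0.518/\lambda$. This does not absorb the $0.236/\lambda$ of Lemma~\ref{N3}. You therefore need $N_3=\lambda+\frac{1}{\pi^2}\int_0^1\hat{J}\hat{J}'\frac{dv}{v}+O(\lambda^{-2})$. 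This holds, but the cut at $v=1$ in that proof creates spurious $1/\lambda$ terms. The remainder in $N_2$ must also use the decay $F_2(v)\ll 1/v$, since $\max|F_2|$ alone already costs at least $1/(4\pi^2\lambda)$.
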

A change of variable swiftly implies that
\begin{equation*}
  \int_{-\infty}^{\infty}|C_{[-M,M],\delta}(t)|^2dt
  =\delta^{-1}N(\lambda)
  \quad\text{where}\quad
  N(\lambda)=\int_{-\infty}^{\infty}|C_{[-\lambda,\lambda],1}(t)|^2dt
\end{equation*}
The direct plot displayed in Figure~\ref{Nlambda} seems to show
that $N(\lambda)-2\lambda$ is an increasing function
of~$\lambda$. Clarifying this question may lead to some interesting property.
\begin{figure}[!h]
  \centering
  \includegraphics[scale=0.8]{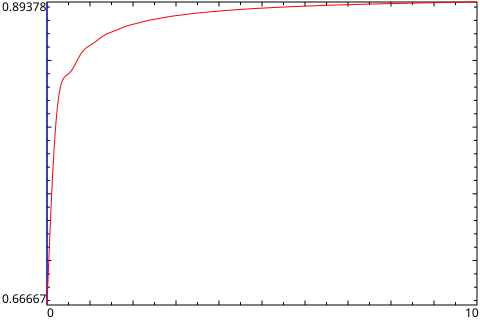}
  \caption{$\lambda\mapsto\int_\infty^{\infty}|C_{[-\lambda,\lambda],
      1}(t)|^2dt-2\lambda$}
  \label{Nlambda}
\end{figure}
This plot has been obtained by Pari/GP and numerical integration, on
setting \texttt{realprecision} to~300. Even like that, the reader
should be aware that numerical integration remains highly unreliable,
so that the above plot should be taken with a pinch of salt.
A detail of Figure~\ref{Nlambda} is displayed in
Figure~\ref{Nlambda-spe}. It shows an inflexion point around~$t=1/2$.
\begin{figure}[!h]
  \centering
  \includegraphics[scale=0.8]{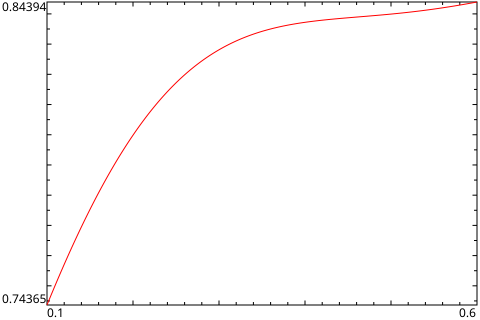}
  \caption{Detail of $\lambda\mapsto\int_\infty^{\infty}|C_{[-\lambda,\lambda],
      1}(t)|^2dt-2\lambda$}
  \label{Nlambda-spe}
\end{figure}

The proof unfolds in several lemmas.
By using the Parseval identity, we get
\begin{align*}
  N(\lambda)
  &=
    \int_0^1\Bigl((1-u)\cos(2\pi \lambda u)
    +\hat{J}(u)\frac{\sin(2\pi \lambda u)}{\pi u}\Bigr)^2du
  \\&=
  \int_0^1[(1-u)\cos(2\pi \lambda u)]^2du
  +\int_0^1(1-u)\hat{J}(u)\frac{\sin(4\pi \lambda u)}{\pi u}du
  \\&\qquad+\int_0^1\Bigl[\hat{J}(u)\frac{\sin(2\pi \lambda
  u)}{\pi u}\Bigr]^2du
  \\&=N_1(\lambda)+N_2(\lambda)+N_3(\lambda)
\end{align*}
say. Our aim is now to evaluate $N_1$, $N_2$ and $N_3$ separately.
It is worth recalling two 
classical functions:
\begin{equation}
  \label{eq:32}
  \sinc t=\frac{\sin t}{t}\quad
  \text{and}\quad
  \Si(t)=\int_0^t(\sinc v)dv.
\end{equation}
Let
us start with some properties on $\hat{J}$.
\begin{lem}
  \label{quantifyJhat}
  When $t\in[0,1]$ we have $\hat{J}(1-t)=1-\hat{J}(t)$.
  
  The function $\hat{J}'(t)$ is convex non-positive: decreasing over $[0,1/2]$, and
  increasing over $[1/2,1]$. We have $\hat{J}'(1-t)=\hat{J}'(t)$ and
  $\hat{J}'(0)=0$. The function $\hat{J}'(t)/t$ is negative increasing
  from $-2\pi^2/3$ to~0.
  
  The function $\hat{J}''(t)$ is increasing over $[0,1]$, with values
  $\hat{J}''(0)=-\hat{J}''(1)=-2\pi^2/3$. 
\end{lem}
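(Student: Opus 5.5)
The plan is to work directly with the closed form \eqref{defJhat}: on $(0,1)$ we have $\hat{J}(t)=\pi t(1-t)\cot\pi t+t$. Everything will be reduced to elementary trigonometric identities and the classical partial-fraction/Laurent expansion of $\cot$.

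\emph{Step 1: the symmetry.} Write $\cot\pi(1-t)=-\cot\pi t$. Then
\begin{equation*}
\hat{J}(1-t)=-\pi(1-t)t\cot\pi t+1-t=1-\hat{J}(t).
\end{equation*}
The endpoint values $\hat{J}(0)=1$ and $\hat{J}(1)=0$ agree with this identity, so it holds on all of $[0,1]$. Differentiating once gives $\hat{J}'(1-t)=\hat{J}'(t)$, and differentiating again gives $\hat{J}''(1-t)=-\hat{J}''(t)$. In particular $\hat{J}''(1/2)=0$, and $\hat{J}''(1)=-\hat{J}''(0)$.

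\emph{Step 2: behaviour at $0$.} Use $\pi t\cot\pi t=1-\pi^2t^2/3-\pi^4t^4/45-\cdots$. This yields
\begin{equation*}
\hat{J}(t)=(1-t)(1-\pi^2t^2/3+\Ocal(t^4))+t=1-\pi^2t^2/3+\pi^2t^3/3+\Ocal(t^4).
\end{equation*}
Hence $\hat{J}'(0)=0$, $\hat{J}''(0)=-2\pi^2/3$, and $\hat{J}''(1)=2\pi^2/3$ by Step 1. It also gives $\hat{J}'(t)/t\to-2\pi^2/3$ as $t\to0^+$. At $t=1$ we get $\hat{J}'(1)=\hat{J}'(0)=0$, so $\hat{J}'(t)/t\to0$ there.

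\emph{Step 3: the core claim, that $\hat{J}''$ is increasing on $[0,1]$.} This is equivalent to convexity of $\hat{J}'$, i.e.\ $\hat{J}'''\ge0$. Set $h(t)=t(1-t)$ and $f(t)=\pi\cot\pi t$. Then
\begin{equation*}
\hat{J}'''=h'''f+3h''f'+3h'f''+hf'''=-6f'+3(1-2t)f''+t(1-t)f'''.
\end{equation*}
First I would try the partial fraction expansion $f(t)=\sum_{m\in\mathbb{Z}}1/(t-m)$ (symmetric summation), which gives $f^{(k)}(t)=(-1)^kk!\sum_m(t-m)^{-k-1}$. Substituting, each term $m$ contributes
\begin{equation*}
\frac{6}{(t-m)^2}+\frac{6(1-2t)}{(t-m)^3}+\frac{6t(1-t)}{(t-m)^4}.
\end{equation*}
Put $u=t-m$ and simplify the numerator. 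It should collapse to $6m(m-1)/u^4$, since $u^2+(1-2t)u+t(1-t)=(u+t)(u+1-t)=m(m+1-\dots)$; more precisely it equals $(t-u)(1-t+u)\cdot$sign. This quantity is $\ge0$ for every integer $m$, because $m(m-1)\ge0$. That gives $\hat{J}'''\ge0$, with the $m=0,1$ terms vanishing as they must for smoothness.

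This is the step I expect to be the main obstacle, and the exact algebra must be checked carefully. If the termwise sign fails, the fallback is to write $\hat{J}'''$ in terms of $s=\sin\pi t$ and $c=\cos\pi t$, and prove positivity on $(0,1/2]$ using the oddness of $\hat{J}'''$ about $1/2$ inherited from Step 1; note it is odd, so only one side needs checking.

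\emph{Step 4: consequences.} Since $\hat{J}''$ is increasing and vanishes at $1/2$, $\hat{J}'$ decreases on $[0,1/2]$ and increases on $[1/2,1]$. Its endpoint values are $\hat{J}'(0)=\hat{J}'(1)=0$, so $\hat{J}'\le0$; it is convex. For $\hat{J}'(t)/t$, take the derivative $(t\hat{J}''(t)-\hat{J}'(t))/t^2$. The numerator $\phi(t)=t\hat{J}''-\hat{J}'$ satisfies $\phi(0)=0$ and $\phi'=t\hat{J}'''\ge0$, so $\phi\ge0$. Thus $\hat{J}'/t$ is increasing, from $-2\pi^2/3$ to $0$. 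It is negative on $(0,1)$ because $\hat{J}'<0$ there: $\hat{J}$ is strictly decreasing by Vaaler, and the convexity of $\hat{J}'$ with zero endpoints forbids interior zeros unless $\hat{J}'\equiv0$.
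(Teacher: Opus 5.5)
Your proof is correct, and it takes a genuinely different route from the paper. The paper writes $\hat J'$ and $\hat J''$ in closed form in terms of $\sin\pi t$ and $\sin 2\pi t$, and reads the symmetry and $\hat J'(0)=0$ off that formula. It then rests the monotonicity of $\hat J''$, and with it the convexity and shape of $\hat J'$, on ``a limited development and a plot''. So the central claim of the lemma is only checked numerically there. You instead prove the key fact $\hat J'''>0$ analytically, via the Mittag-Leffler expansion of $\pi\cot\pi t$. Everything else then follows by elementary calculus, including strictness and the limits of $\hat J'(t)/t$. What the paper's closed forms buy is direct usability in the later Pari/GP computations ($\myc$, and the bounds on $g_2''$, $g_3''$); these need numerics anyway.

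Fix the algebra in Step 3, though. Since $f'''(t)=-6\sum_m (t-m)^{-4}$, the last summand carries a minus sign. The factorisation $(u+t)(u+1-t)$ you wrote is also wrong. The correct computation, with $u=t-m$, is
\begin{equation*}
\frac{6}{u^2}+\frac{6(1-2t)}{u^3}-\frac{6t(1-t)}{u^4}
=\frac{6(u-t)(u-t+1)}{u^4}
=\frac{6m(m-1)}{(t-m)^4},
\end{equation*}
so that $\hat J'''(t)=6\sum_{m\neq 0,1}m(m-1)/(t-m)^4>0$. As a sanity check, both this series and the direct formula give $\hat J'''(1/2)=6\pi^2-\pi^4/2$.

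It is also worth stating that the poles of $\pi\cot\pi t$ at $0$ and $1$ are cancelled by the factor $t(1-t)$. Hence $\hat J$ restricted to $[0,1]$ is the restriction of a function analytic on $(-1,2)$. This justifies differentiating the symmetry identity at the endpoints and using the series on the closed interval.

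Finally, you do not need Vaaler to get $\hat J'<0$ on $(0,1)$. $\hat J''$ is strictly increasing and vanishes at $1/2$, so $\hat J'$ strictly decreases from $0$ and then strictly increases back to $0$.
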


\begin{proof}
  When $t\in[0,1]$, we find that
  \begin{align*}
    \hat{J}'(t)
    &=
      \pi (1-2t)\cot\pi t+1
      -\frac{\pi^2 t(1-t)}{\sin^2\pi t}
    \\&=
    \frac{\pi (1-2t)\sin 2\pi t
    -2\pi^2 t(1-t)}{2\sin^2\pi t}
    +1. 
  \end{align*}
  The first properties follows immediately from this expression.
  Let us examine the second derivative:
  \begin{align*}
    \hat{J}''(t)
    &=
      \frac{-2\pi \sin 2\pi t-2\pi^2 (1-2t)
      +2\pi^2 (1-2t)\cos 2\pi t}{2\sin^2\pi t}
    \\&\qquad
    -2\pi \cos(\pi t)\frac{\pi (1-2t)\sin 2\pi t
    -2\pi^2 t(1-t)}{2\sin^3\pi t}.
  \end{align*}
  A limited development and a plot are enough to justify the
  claimed properties.
  It is interesting to show how we perform these computations. The
  Pari/GP script is the next one.
\begin{verbatim}
{Jhat(u) =
   if(abs(u) > 1,
      return(0),
      return((1-abs(u))*cos(Pi*u)/sinc(Pi*u)+abs(u)));}

{Jhatprime(t) =
   return(1+(Pi*(1-2*t)*sin(2*Pi*t)-2*Pi^2*t*(1-t))/2/sin(Pi*t)^2);}

{Jhatprimedev(t) =
   return(1+(p*(1-2*t)*sin(2*p*t)-2*p^2*t*(1-t))/2/sin(p*t)^2);}
	  
{Jhatsecond(t) =
   my(aux = -2*Pi*sin(2*Pi*t)-2*Pi^2*(1-2*t), auxbis);
   aux += 2*Pi^2*(1-2*t)*cos(2*Pi*t);
   aux = aux /2/sin(Pi*t)^2;
   auxbis = -2*Pi*cos(Pi*t)*(Pi*(1-2*t)*sin(2*Pi*t)-2*Pi^2*t*(1-t));
   aux += auxbis /2/sin(Pi*t)^3;
   return(aux);}
   
{Jhatseconddev(t) =
   my(aux = -2*p*sin(2*p*t)-2*p^2*(1-2*t), auxbis);
   aux += 2*p^2*(1-2*t)*cos(2*p*t);
   aux = aux /2/sin(p*t)^2;
   auxbis = -2*p*cos(p*t)*(p*(1-2*t)*sin(2*p*t)-2*p^2*t*(1-t));
   aux += auxbis /2/sin(p*t)^3;
   return(aux);}
\end{verbatim}
  The limited development of $\hat{J}'(t)$ around $t=0$ is then simply
  obtained by the command \texttt{Jhatprimedev(t)+O(t\^{}4)} where the
  variable~$p$ stands for~$\pi$. We proceed similarly for $\hat{J}''(t)$.
\end{proof}
The case of $N_1$ is straightforward.
\begin{lem}
  \label{N1}
  We have $N_1(\lambda)=
  \frac16
  +\frac{1}{16\pi^2\lambda^2}
  \Bigl(1-\frac{\sin(4\pi \lambda)}{4\pi \lambda}\Bigr)$.
\end{lem}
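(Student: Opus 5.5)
The plan is to compute the integral directly from its definition in the Parseval expansion of $N(\lambda)$:
\begin{equation*}
  N_1(\lambda)=\int_0^1(1-u)^2\cos^2(2\pi\lambda u)\,du .
\end{equation*}
First linearise the square of the cosine with $\cos^2 x=\tfrac12(1+\cos 2x)$. This splits $N_1(\lambda)$ into $\tfrac12\int_0^1(1-u)^2du=\tfrac16$ plus $\tfrac12 I(a)$, where
\begin{equation*}
  I(a)=\int_0^1(1-u)^2\cos(au)\,du,\qquad a=4\pi\lambda .
\end{equation*}
The constant $\tfrac16$ in the statement is therefore immediate, and it remains to show that $\tfrac12 I(a)=a^{-2}(1-\sin a/a)$.

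I will evaluate $I(a)$ by two integrations by parts, each time differentiating the polynomial factor.

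\emph{First integration by parts.} The boundary term $\bigl[(1-u)^2\sin(au)/a\bigr]_0^1$ vanishes at both ends: at $u=1$ because of the factor $(1-u)^2$, and at $u=0$ because $\sin 0=0$. Hence
\begin{equation*}
  I(a)=\frac{2}{a}\int_0^1(1-u)\sin(au)\,du .
\end{equation*}

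\emph{Second integration by parts.} Here the boundary term $\bigl[-(1-u)\cos(au)/a\bigr]_0^1$ equals $1/a$, since only the endpoint $u=0$ contributes. The remaining integral is $-\int_0^1\cos(au)\,du/a=-\sin a/a^2$. Therefore
\begin{equation*}
  I(a)=\frac{2}{a^2}\Bigl(1-\frac{\sin a}{a}\Bigr).
\end{equation*}

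Halving this and substituting $a=4\pi\lambda$ gives $\frac{1}{16\pi^2\lambda^2}\bigl(1-\frac{\sin 4\pi\lambda}{4\pi\lambda}\bigr)$, which is the claimed formula.

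There is no genuine obstacle here. The only points needing care are the signs and the boundary terms in the two integrations by parts: the first boundary term vanishes, while the second contributes exactly $1/a$. Everything else is a routine computation.
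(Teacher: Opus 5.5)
Your proposal is correct and follows the paper's own proof: you linearise $\cos^2$ to split off $\tfrac16$, then integrate by parts twice, with the first boundary term vanishing and the second contributing $1/(4\pi\lambda)$. Your signs and constants agree with the paper's chain of equalities at every step.
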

Notice for later use that $1-\sinc t\le 1.217\cdots$ reached at
$t=4.493\dots$, the positive root of $\tan t=t$ in $[4, 3\pi/2)$.
\begin{proof}
We directly find that
\begin{align*}
  N_1(\lambda)
  &=
    \int_0^1(1-u)^2\frac{1+\cos(4\pi \lambda u)}{2}du
  =
  \frac16+\frac12\int_0^1(1-u)^2\cos(4\pi \lambda u)du
  \\&=
  \frac16+\frac{1}{4\pi\lambda}\int_0^1(1-u)\sin(4\pi \lambda u)du
  \\&=
  \frac16
  +\frac{1}{16\pi^2\lambda^2}
  -\frac{1}{16\pi^2\lambda^2}\int_0^1\cos(4\pi \lambda u)du
\end{align*}
and the claimed expression follows.
\end{proof}

\begin{figure}[!h]
  \centering
  \includegraphics[scale=0.8]{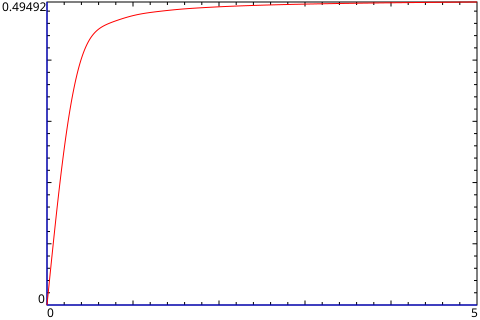}
  \caption{$\lambda\mapsto N_2(\lambda)$}
  \label{N2fig}
\end{figure}
\begin{lem}
  \label{N2}
  When $\lambda\le 1$, we have $N_2(\lambda)\le0.48$, while, when
  $\lambda\ge 0.84$, we have $N_2(\lambda)\le\frac12-\frac{0.518}{\lambda}$.
\end{lem}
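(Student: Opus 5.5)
The plan is to integrate by parts against the sine integral. Set $h(u)=(1-u)\hat{J}(u)$ on $[0,1]$, so that
\begin{equation*}
N_2(\lambda)=\int_0^1 h(u)\frac{\sin(4\pi\lambda u)}{\pi u}\,du .
\end{equation*}
Lemma~\ref{quantifyJhat} gives the facts we need about $h$. We have $h(0)=1$ and $h(1)=0$. The function $h$ is non-increasing, being a product of two non-negative non-increasing factors. Also $h'(u)=-\hat{J}(u)+(1-u)\hat{J}'(u)$, so $h'(0)=-1$ because $\hat{J}'(0)=0$.

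Since $\frac{d}{du}\Si(4\pi\lambda u)=\sin(4\pi\lambda u)/u$, one integration by parts gives
\begin{equation*}
N_2(\lambda)=\frac1\pi\int_0^1 (-h'(u))\,\Si(4\pi\lambda u)\,du ,
\end{equation*}
and the boundary terms vanish. Here $-h'\ge0$ and $\int_0^1(-h')=1$, so $N_2(\lambda)$ is $\frac1\pi$ times a probability average of $\Si(4\pi\lambda u)$. Using $\int_0^1(-h')=1$ once more, we can rewrite
\begin{equation*}
N_2(\lambda)=\frac12-\frac1\pi\int_0^1(-h'(u))\Bigl(\frac\pi2-\Si(4\pi\lambda u)\Bigr)du .
\end{equation*}
This form is the natural starting point for the bound when $\lambda\ge0.84$.

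\emph{Large $\lambda$.} Write $\frac\pi2-\Si(x)=\int_x^\infty \frac{\sin v}{v}dv$. Substituting this and exchanging integrals leads to
\begin{equation*}
\frac1\pi\int_0^\infty \frac{\sin(4\pi\lambda w)}{w}\bigl(1-h(\min(w,1))\bigr)\,dw .
\end{equation*}
Near $w=0$ we have $1-h(w)=w+O(w^2)$. A further integration by parts of $\sin(4\pi\lambda w)\cdot\frac{1-h(w)}{w}$, using $h''$ (which is controlled through Lemma~\ref{quantifyJhat}), should produce a main term of exact size $c/\lambda$ with an explicit $c$, plus an error of size $O(1/\lambda^2)$. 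This error comes from $\int|((1-h(w))/w)''|$, the jump of the derivative at $w=1$, and the tail $\int_1^\infty\sin(4\pi\lambda w)\,dw/w$. Bounding the error explicitly gives $N_2(\lambda)\le\frac12-\frac{0.518}{\lambda}$ for $\lambda$ beyond some explicit $\Lambda_0$.

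\emph{Compact ranges.} On the remaining range $[0.84,\Lambda_0]$, and for the bound $N_2(\lambda)\le0.48$ on $[0,1]$, I would argue numerically but rigorously. The map $\lambda\mapsto N_2(\lambda)$ is Lipschitz, with derivative bounded by $4\int_0^1|h(u)|\,du\le4$. Hence it suffices to evaluate $N_2$ on a fine grid, using the average-of-$\Si$ representation with certified bounds on $\Si$ and on $h'$. For small $\lambda$ the representation also gives $N_2(\lambda)\le\frac1\pi\Si(4\pi\lambda)$, which is small, so only $\lambda$ near $1$ actually needs computation. This is consistent with Figure~\ref{N2fig}, where the maximum on $[0,1]$ appears near $\lambda=1$.

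The main obstacle I expect is the explicit constant $0.518$. The asymptotic main term must be computed exactly, and the error term must be made explicit and small enough that $\Lambda_0$ is low enough for a grid check to be feasible. If the analytic error bound turns out too lossy, I would extend the grid computation to a larger $\Lambda_0$ rather than sharpen the analysis.
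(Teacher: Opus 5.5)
\textbf{The large-$\lambda$ step cannot be completed, because the second inequality of the statement is false.} Your own representation shows this as soon as you compute the constant $c$, which you left undetermined. Put $\phi(w)=(1-h(\min(w,1)))/w$. This function is continuous on $(0,\infty)$, with $\phi(0^+)=-h'(0)=1$ and $\phi(\infty)=0$. Its derivative $\phi'$ is integrable, since it equals $-1/w^2$ for $w\ge1$. One integration by parts together with Riemann--Lebesgue then gives
\[
\int_0^\infty\sin(4\pi\lambda w)\phi(w)\,dw=\frac{1}{4\pi\lambda}+o(\lambda^{-1}),
\qquad\text{hence}\qquad
N_2(\lambda)=\frac12-\frac{1}{4\pi^2\lambda}+o\Bigl(\frac1\lambda\Bigr),
\]
with $\frac{1}{4\pi^2}=0.0253\ldots$. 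Since $0.0253<0.518$, you get $N_2(\lambda)>\frac12-\frac{0.518}{\lambda}$ for every large $\lambda$. The compact range fails too. The right-hand side $\frac12-\frac{0.518}{\lambda}$ is negative for $\lambda<1.036$, whereas $N_2(\lambda)$ is about $0.47$ there; numerically $N_2(1)\approx0.473$, in line with $\frac12-\frac{1}{4\pi^2}$. So the ``main obstacle'' you flag is not a matter of sharpness. No explicit error analysis and no grid check can reach $0.518$. The value of $c$ follows directly from the expansion $1-h(w)=w+O(w^2)$ that you wrote down, and computing it should have been your first step.

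The paper argues along the same lines as you. It integrates by parts against $f_2=\frac\pi2-\Si$ and then against $F_2(w)=\int_w^\infty f_2$. It reaches $0.518\approx\frac{3}{4\pi^2}+\frac12-0.0575$ only through three errors. First, it uses $g_2'(0)=-1-\frac{2\pi^2}{3}$, whereas $\hat J'(0)=0$ (Lemma~\ref{quantifyJhat}) gives $g_2'(0)=-1$. Second, its closed form for $F_2$ has a sign error: correctly $F_2(w)=-\frac{\sin w}{w}+2w\int_w^\infty\frac{\sin u}{u^3}\,du$, so $F_2(0)=\int_0^\infty f_2=1$, not $-3$. Third, the boundary term carries a further sign slip, even relative to its own values. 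With correct values the boundary term is $-\frac{1}{\pi\tau}=-\frac{1}{4\pi^2\lambda}$, in agreement with the expansion above. The first half of the lemma ($N_2\le0.48$ for $\lambda\le1$) is consistent with numerics, and your Lipschitz-plus-grid plan is adequate for it. Note, however, that $N_2(\lambda)\le\frac1\pi\Si(4\pi\lambda)$ holds only for $\lambda\le\frac14$, and it certifies $0.48$ only for $\lambda\lesssim0.14$. The grid must therefore cover essentially all of $[0.14,1]$, with a thin margin near $\lambda=1$.
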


\begin{proof}
  Let us set $\tau=4\pi\lambda$ and $g_2(x)=(1-x)\hat{J}(x)$ and write
\begin{align*}
  N_2(\lambda)
  &=
    \frac{1}{\pi}\int_0^\tau g_2(v/\tau)\frac{\sin v}{v}dv
    =
    \frac{-1}{\pi\tau}\int_0^\tau g_2(v/\tau)f_2'(v)dv
  \\&=
  \frac{1}{2}
  +\frac{1}{\pi\tau}\int_0^\tau g_2'(v/\tau)f_2(v)dv
\end{align*}
where
\begin{equation*}
  f_2(v)=\frac{\pi}{2}-\Si(v)
  =\int_v^{\infty}\frac{\sin u}{u}du
  =\frac{\cos v}{v}-\int_v^{\infty}\frac{\cos u}{u^2}du.
\end{equation*}
Let us further define
\begin{align*}
  F_2(w)&=\int_w^\infty f_2(v)dv
        =\int_w^\infty\frac{\cos v}{v}dv-\int_w^{\infty}(u-w)\frac{\cos u}{u^2}du
  \\&=\frac{-\sin w}{w}+\int_w^\infty\frac{\sin v}{v^2}dv
  -\int_w^{\infty}(\sin u)\frac{u-2w}{u^3}du
  \\&=\frac{-\sin w}{w}-2w\int_w^\infty(\sin u)\frac{du}{u^3}.
\end{align*}
This function may be computed via the script
\begin{verbatim}
F2(w)=-sin(w)/w-2*w*intnum(u=w,[oo,-I], sin(u)/u^3)
\end{verbatim}
It is negative increasing on~$[0,2.859\cdots]$. We have
$F_2(0)=-3$. It then oscillates between $0.285\cdots$ and
$-0.145\cdots$.
We numerically check that
\begin{equation}
  \label{BoundF2}
  \max_{w\ge 5/2}|F_2(w)|\le 3/10.
\end{equation}
It is classical that $|f_2(v)|\le\pi/2$.
Furthermore
$g_2'(t)=-\hat{J}(t)+(1-t)\hat{J}'(t)$ so that
$g_2'(0)=-1-\frac{2\pi^2}{3}$ and $g_2'(1)=0$. Therefore
\begin{align*}
  N_2(\lambda)
  &=
  \frac{1}{2}
  -\frac{1}{\pi\tau}\int_0^\tau g_2'(v/\tau)F_2'(v)dv
  \\&=
  \frac{1}{2}
    -\frac{1}{\pi\tau}[F_2(\tau)g_2'(1)-F_2(0)g_2'(0)]
    +\frac{1}{\pi\tau^2}\int_0^\tau g_2''(v/\tau)F_2(v)dv
  \\&=
  \frac{1}{2}
  -\biggl(1+\frac{2\pi^2}{3}\biggr)\frac{3}{4\pi^2\lambda}
  +\frac{1}{\pi\tau^2}\int_0^\tau g_2''(v/\tau)F_2(v)dv.
\end{align*}
The function $g''_2(t)=(1-t)\hat{J}''(t)-2\hat{J}(t)$ is concave. It
increases from 
$-2\pi^2/3$, reaches 0 at $t_0=0.239617\dots$, keeps increasing then
starts decreasing until $g''_2(1)=0$. 
Therefore, when $t_0\tau\ge 5/2$ by using~\eqref{BoundF2}, we get
\begin{align*}
  \frac{1}{\pi\tau^2}\int_0^\tau |g_2''(v/\tau)F_2(v)|dv
  &\le\frac{1}{4\pi^2\lambda}
  \biggl(3(g_2'(0)-g_2'(t_0))+\frac{3}{10}(g_2'(1)-g_2'(t_0)\biggr)
  \\&\le \frac{0.0575}{\lambda}.
\end{align*}
This proves the lemma when $\lambda\ge 0.83$. A numerical verification
closes the proof.
\end{proof}

\begin{lem}
  \label{N3}
  We have $N_3(\lambda)\le \lambda+\tfrac12\myc+\frac{0.236}{\lambda}$.
\end{lem}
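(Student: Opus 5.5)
The plan is to write $\sin^2(2\pi\lambda u)=\tfrac12(1-\cos(\tau u))$ with $\tau=4\pi\lambda$, set $h(u)=\hat{J}(u)^2$, and split
\begin{equation*}
  N_3(\lambda)=\frac{1}{2\pi^2}\int_0^1 h(u)\frac{1-\cos \tau u}{u^2}\,du
\end{equation*}
into a linear main term, a constant term and an oscillating remainder. For the main term I use the classical evaluation $\int_0^\infty(1-\cos\tau u)u^{-2}du=\pi\tau/2$. Since $h(0)=1$, this gives
\begin{equation*}
  N_3(\lambda)=\lambda+\frac{1}{2\pi^2}\int_0^\infty \varphi(u)\bigl(1-\cos\tau u\bigr)du,
\end{equation*}
where $\varphi(u)=(h(u)-1)/u^2$ on $[0,1]$ and $\varphi(u)=-1/u^2$ for $u>1$. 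By Lemma~\ref{quantifyJhat}, $\hat{J}'(0)=0$ and $\hat{J}''(0)=-2\pi^2/3$. Hence $h$ is smooth near $0$ with $h(u)-1\sim \hat{J}''(0)u^2$, so $\varphi$ is bounded near the origin. Also $\varphi$ is continuous at $u=1$ with value $-1$, because $\hat{J}(1)=0$.

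The non-oscillating part is $\frac{1}{2\pi^2}\int_0^\infty\varphi$, which I compute by integration by parts on $[0,1]$. The boundary term at $0$ vanishes since $(h-1)/u\to 2\hat{J}(0)\hat{J}'(0)=0$. This gives
\begin{equation*}
  \int_0^1\frac{h-1}{u^2}du=1+2\int_0^1\frac{\hat{J}\hat{J}'}{u}du,
\end{equation*}
and $\int_1^\infty\varphi=-1$ cancels the $1$. So the constant term equals $\frac{1}{\pi^2}\int_0^1\hat{J}(v)\hat{J}'(v)\,dv/v$, which is $\tfrac12\myc-\tfrac23$ by~\eqref{defc}. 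This is at most $\tfrac12\myc$, and the slack of $2/3$ is available to absorb losses below. I will double-check this bookkeeping against the author's normalisation of $\myc$. If the $2/3$ is actually needed, it must come from the oscillatory term when $\lambda$ is small, and the numerical check in the last step will cover that.

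For the oscillating part $-\frac{1}{2\pi^2}\int_0^\infty\varphi(u)\cos(\tau u)du$, I integrate by parts once. The boundary terms vanish at $0$ (because of $\sin 0$) and at $\infty$, so the term equals
\begin{equation*}
  \frac{1}{2\pi^2\tau}\int_0^\infty\varphi'(u)\sin(\tau u)\,du,
\end{equation*}
whose absolute value is at most $V/(8\pi^3\lambda)$, where $V=\int_0^\infty|\varphi'|$ is the total variation of $\varphi$. On $(1,\infty)$ the function $\varphi$ increases from $-1$ to $0$, contributing $1$ to $V$. On $[0,1]$ I plan to show that $(h-1)/u^2$ is monotone, decreasing from $2\hat{J}''(0)\hat{J}(0)\cdot\tfrac12=-2\pi^2/3$ at $0$ to $-1$ at $1$. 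This should follow from the monotonicity and convexity information on $\hat{J}'$ and $\hat{J}''$ in Lemma~\ref{quantifyJhat}, together with $\hat{J}'(t)/t$ increasing. If so, the variation on $[0,1]$ is $2\pi^2/3-1$, so $V=2\pi^2/3$ and the oscillatory term is at most $\frac{1}{12\pi\lambda}\approx\frac{0.0266}{\lambda}$, comfortably below $0.236/\lambda$.

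The main obstacle is rigorously establishing the monotonicity of $(\hat{J}(u)^2-1)/u^2$ on $[0,1]$. If this is awkward, I would fall back on bounding $\int_0^1|\varphi'|$ directly from the explicit formula for $\hat{J}'$ in the proof of Lemma~\ref{quantifyJhat}, plus a Pari/GP evaluation in the style used there. Any crude bound on $V$ up to roughly $18$ still gives the claimed $0.236/\lambda$.
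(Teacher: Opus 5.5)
Your argument is correct, and it takes a genuinely different and cleaner route than the paper's.

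\textbf{The paper's route.} It rescales to $v=2\pi\lambda u$ and writes $\sin^2 v/v^2=-f_3'(v)$ with $f_3(w)=\int_w^\infty \sin^2 v\,dv/v^2$. It then integrates by parts to move the derivative onto $g_3=\hat{J}^2$; the boundary term $f_3(0)=\pi/2$ gives $\lambda$. Next it discards the range $v\le 1$ by positivity and splits $f_3(v)=\frac{1}{2v}+f_4(v)$. The piece $\frac{1}{2v}$ gives the constant $\frac{1}{2\pi^2}\int_{1/\tau'}^1 g_3'(v)\,dv/v$, with a truncation loss of order $1/\lambda$. The oscillating piece $f_4$ needs a second integration by parts against its tail $F_4$, which uses a numerical value of $F_4(1)$, a bound $|F_4(v)|\ll v^{-2}$, and the numerically checked bound $|g_3''|\le 4\pi^2/3$.

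\textbf{Your route.} By subtracting $\hat{J}(0)^2=1$ at the outset, you get the exact identity
\begin{equation*}
  N_3(\lambda)=\lambda+\tfrac12\myc-\tfrac23+\frac{1}{2\pi^2\tau}\int_0^\infty\varphi'(u)\sin(\tau u)\,du .
\end{equation*}
You only need first-derivative information and the total variation of $\varphi$, so you avoid $g_3''$ and $F_4$ entirely. You also obtain a two-sided error $\le 1/(12\pi\lambda)$ valid for every $\lambda>0$, far better than $0.236/\lambda$.

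Your bookkeeping is right: the constant is $\frac12\myc-\frac23$. The paper's computation produces the same constant before its last line, where it is weakened to $\myc$. The $\frac23$ you found as slack is exactly what the main terms $\frac16$ and $\frac12$ of Lemmas~\ref{N1} and~\ref{N2} absorb in Theorem~\ref{myNorm}. So the sharper form you obtain is the useful one, and no slack is needed for small $\lambda$.

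\textbf{Closing the step you flagged.} The monotonicity you call the main obstacle follows from exactly the information you cite. Write $1-\hat{J}(u)^2=\int_0^u s\,k(s)\,ds$ with $k(s)=-2\hat{J}(s)\hat{J}'(s)/s$. By Lemma~\ref{quantifyJhat} and the properties of $\hat{J}$, $k$ is the product of the two non-negative non-increasing functions $\hat{J}(s)$ and $-\hat{J}'(s)/s$, hence non-increasing. Therefore
\begin{equation*}
  \frac{d}{du}\,\frac{1-\hat{J}(u)^2}{u^2}=\frac{2}{u^3}\int_0^u s\bigl(k(u)-k(s)\bigr)\,ds\le 0 .
\end{equation*}
So $\varphi$ is non-decreasing on $(0,\infty)$, rising from $-2\pi^2/3$ to $0$. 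You wrote ``decreasing'', but your value $V=2\pi^2/3$ is the correct one, and no numerical fallback is needed. Incidentally, the threshold in your fallback is $V\le 0.236\cdot 8\pi^3\approx 58$, not $18$; this is harmless, since your figure was conservative.
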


\begin{proof}
  Let us set this time $\tau'=2\pi\lambda$ and
  $g_3(x)=\hat{J}(x)^2$. We find that 
\begin{equation*}
  N_3(\lambda)
  =\frac{2\lambda}{\pi}
    \int_0^{\tau'}g_3(v/\tau')
    \frac{\sin^2 v}{v^2}dv
  =-\frac{2\lambda}{\pi}
    \int_0^{\tau'}g_3(v/\tau')
    f'_3(v)dv
\end{equation*}
where
\begin{align}
  \label{eq:30}
  f_3(w)
  &=\int_w^\infty \frac{\sin^2v}{v^2}dv
  =\frac{1}{2w}-\int_w^\infty \frac{\cos(2v)}{2v^2}dv
  \\&=\frac{1}{2w}+\frac{\sin 2w}{4w^2}-\int_w^\infty
  \frac{\sin(2v)}{2v^3}dv
  =\frac{1}{2w}+f_4(w).
\end{align}
Therefore (the second line because $g'_3(w)\le 0$ when $w\ge0$)
\begin{align*}
  N_3(\lambda)
  &=
    \frac{2\lambda}{\pi} f_3(0)
    +\frac{2\lambda}{\pi\tau'}\int_0^{\tau'} g_3'(v/\tau')
    f_3(v)dv
  \\&\le
  \lambda
    +\frac{2\lambda}{\pi\tau'}\int_1^{\tau'} g_3'(v/\tau')
    f_3(v)dv
  \\&\le
  \lambda
  +\frac{2\lambda}{\pi\tau'}\int_1^{\tau'} g_3'(v/\tau')
    \frac{dv}{2v}
  +\frac{2\lambda}{\pi\tau'}\int_1^{\tau'} g_3'(v/\tau')
    f_4(v)dv
  \\&\le
  \lambda
  +\frac{1}{2\pi^2}\int_{1/\tau'}^1 g_3'(v)
    \frac{dv}{v}
  +\frac{2\lambda}{\pi\tau'}\int_1^{\tau'} g_3'(v/\tau')
    f_4(v)dv,
\end{align*}
then
\begin{align*}
  \int_1^{\tau'} g_3'(v/\tau')
    f_4(v)dv
  &=
  g'_3(1/\tau')F_4(1)-g'_3(1)F_4(\tau)
  +\frac{1}{\tau}\int_1^{\tau'} g_3''(v/\tau')
  F_4(v)dv
  \\&=
  g'_3(1/\tau')F_4(1)
  -\frac{1}{\tau'}\int_1^{\tau'} g_3''(v/\tau')
    F_4(v)dv
\end{align*}
where
\begin{align*}
  F_4(v)
  &=\int_v^\infty \frac{\sin 2w}{4w^2}dw
  -\int_v^\infty\int_w^\infty
  \frac{\sin(2u)}{2u^3}dudw
  \\&=
  \int_v^\infty \frac{\sin 2w}{4w^2}dw
  -\int_v^\infty
  \frac{(u-v)\sin(2u)}{2u^3}du
  \\&=
  -\int_v^\infty \frac{\sin 2w}{4w^2}dw
  +v\int_v^\infty
  \frac{\sin(2u)}{2u^3}du
\end{align*}
and therefore
\begin{align*}
  F_4(v)
  &=
  \frac{\cos 2v}{8v^2}-\int_v^\infty \frac{\cos 2w}{4w^3}dw
    +v
    \frac{\sin 2v}{16v^3}-\int_v^\infty \frac{3\sin 2w}{8w^4}dw
\end{align*}
We find that $F_4(1)=0.038\cdots\ge0$ by using

{\smallskip\hfill
  \texttt{intnum(v=1,[oo,-2*I],sin(2*v)*(1/4/v\^{}2-(v-1)/2/v\^{}3))}.\hfill
\smallskip}

\noindent
Furthermore $|F_4(v)|\le
(\tfrac18+\tfrac12+\tfrac{1}{16}+\tfrac{1}{8})/v^2=13/(16v^2)$.
We also find that $g_3''(t)=2\hat{J}(t)\hat{J}''(t)+2\hat{J}'(t)^2$
and, numerically, $|g_3''(t)|\le -g_3''(0)=4\pi^2/3$.
This leads to
\begin{equation*}
  \int_1^{\tau'} g_3'(v/\tau')
  f_4(v)dv
  \le \frac{1}{\tau'}\int_1^{\tau'} |g_3''(v/\tau')|
  |F_4(v)|dv
  \le \frac{4\pi^2/3}{2\pi\lambda}\int_1^{\infty} 
  \frac{13dw}{16w}\le \frac{9}{5\lambda}.
\end{equation*}
We thus find that
\begin{align*}
  N_3(\lambda)
  &\le
  \lambda
  +\frac{1}{2\pi^2}\int_{0}^1 g_3'(v)
    \frac{dv}{v}
   -\frac{1}{2\pi^2}\int_{0}^{1/\tau'} g_3'(v)
    \frac{dv}{v}
  +\frac{9}{5\pi^2\lambda}
  \\&\le
  \lambda
  +\frac{1}{2\pi^2}\int_{0}^1 g_3'(v)
    \frac{dv}{v}
   +\frac{2\pi^2/3}{2\pi^2\tau'}
  +\frac{9}{5\pi^2\lambda}
  \le\lambda+\myc+\frac{0.236}{\lambda}.
\end{align*}
This is what was to be poved.
\end{proof}

\begin{proof}[Proof of Theorem~\ref{myNorm}]
  It is enough to join Lemmas~\ref{N1}, \ref{N2} and~\ref{N3}.
\end{proof}




\section{Proof of Theorem~\ref{WeightedSieve}}

This section is devoted to the proof of
Theorem~\ref{WeightedSieve}. We present the quantities in some greater
generality for clarity, and show how the parameters are chosen when required.
We propose to  investigate the approximation
\begin{equation}
  \label{defJQ}
  J_Q(\alpha)=
  \frac{S(0)}{N}\sum_{q\le Q}\theta_q\sum_{\substack{1\le a\le q\\ (a,q)=1}}\hat{\psi}^*_q(a)
  \hat{C}_{[0,N],\delta_q}\biggl(\alpha-\frac{a}{q}\biggr)
\end{equation}
where ${C}_{[0,N],\delta_q}$ comes from Lemma~\ref{BS}, where the
weights $(\theta_q)$ are to be chosen (they are not assumed to be
non-negative) and where
we recall that
\begin{equation}
  \label{defdeltaq}
 \delta_q^{-1}=q(q+Q).
\end{equation}
The $\psi^*_q$ are defined in~\eqref{defpsistarq}. We expect $\hat{\psi}^*_q(a)
  \hat{C}_{[0,N],\delta_q}(\alpha-{a}/{q})$ to be a
  good approximation of $S(\alpha)$ for $\alpha$ close to $a/q$.

Notice that the function $J_Q(\alpha)$ takes its variable from $\mathbb{R}$ and not
from~$\mathbb{R}/\mathbb{Z}$ as is the case for~$S(\alpha)$. This is
for instance why we selected the variable $a$ by ``$1\le a\le
q,(a,q)=1$'' and not by ``$a\mode q$''. A moment's
thought will reveal to the reader that the support of $J_Q$ lies
inside~$[\delta_1,1+\delta_1]$, where we shall henceforth restrict our
argument.

\begin{lem}
  \label{normJQ}
  We have
  \begin{equation*}
    \int_{\delta_1}^{1+\delta_1} |J_Q(\alpha)|^2d\alpha\le
   \frac{|S(0)|^2}{N}\sum_{q\le Q}
    \theta_q^2g(q)(N+\myc  q(q+Q))
  \end{equation*}
\end{lem}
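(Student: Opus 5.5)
The plan is to exploit the fact that the summands of $J_Q$ have pairwise disjoint supports. Once the cross terms vanish, the $L^2$-norm of $J_Q$ splits into a sum of $L^2$-norms of single pieces. Each piece is then controlled by Plancherel and Theorem~\ref{myNorm}, and the sum over $a$ of $|\hat{\psi}^*_q(a)|^2$ produces $g(q)$.

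\emph{Step 1: disjointness of supports.} By Lemma~\ref{BS} (and the translation remark following it, which only multiplies the Fourier transform by a unimodular phase), $\hat{C}_{[0,N],\delta_q}(t)$ vanishes for $|t|\ge\delta_q$. So the $(q,a)$-summand of $J_Q$ is supported in $(a/q-\delta_q,a/q+\delta_q)$. Take two distinct fractions $a/q\neq a'/q'$ with $q,q'\le Q$ and $(a,q)=(a',q')=1$. Then $|a/q-a'/q'|\ge 1/(qq')$. Since $q'\le Q$ we have $\delta_q=1/(q(q+Q))\le 1/(q(q+q'))$, and symmetrically $\delta_{q'}\le 1/(q'(q+q'))$. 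Hence
\begin{equation*}
  \delta_q+\delta_{q'}\le \frac{1}{q(q+q')}+\frac{1}{q'(q+q')}=\frac{1}{qq'}\le \Bigl|\frac aq-\frac{a'}{q'}\Bigr|,
\end{equation*}
so the open supports are disjoint. The fractions $a/q$ with $1\le a\le q$ lie in $(0,1]$, and $\delta_q\le\delta_1$. Therefore every support lies in $[\delta_1-\delta_1,\,1+\delta_1]$, which up to the harmless endpoint region near~$0$ is handled by the choice of window. I would double-check that the window $[\delta_1,1+\delta_1]$ indeed captures each full support, or else simply integrate over all of $\mathbb{R}$, which only increases the left-hand side.

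\emph{Step 2: expansion.} Expanding $|J_Q(\alpha)|^2$, only the diagonal terms survive:
\begin{equation*}
  \int |J_Q(\alpha)|^2d\alpha=\frac{|S(0)|^2}{N^2}\sum_{q\le Q}\theta_q^2\sum_{\substack{1\le a\le q\\(a,q)=1}}|\hat{\psi}^*_q(a)|^2\int_{-\infty}^{\infty}|\hat{C}_{[0,N],\delta_q}(t)|^2dt .
\end{equation*}
By Plancherel and the relation $C_{[0,N],\delta}(t)=C_{[-N/2,N/2],\delta}(t-N/2)$, the inner integral equals $\int|C_{[-N/2,N/2],\delta_q}|^2$. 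Theorem~\ref{myNorm} with $M=N/2$ bounds this by $N+\myc\,\delta_q^{-1}=N+\myc\, q(q+Q)$.

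\emph{Step 3: the arithmetic sum.} We have $\sum_{a\mode q}|\hat{\psi}^*_q(a)|^2=g(q)$, as noted after Lemma~\ref{proppsistarq}. This follows from \eqref{eq:28} and Parseval modulo~$q$ together with the multiplicative structure. Inserting it into Step~2 gives a bound $\frac{|S(0)|^2}{N^2}\sum_q\theta_q^2 g(q)(N+\myc q(q+Q))$. This is at least as good as the claimed bound with the factor $|S(0)|^2/N$.

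The main obstacle I expect is bookkeeping rather than depth. Specifically, I must check that the restriction to $(a,q)=1$ is compatible with the identity for $\sum_a|\hat\psi^*_q(a)|^2$, i.e. that $\hat\psi^*_q$ as used here is the primitive part. I must also confirm that the normalisation $S(0)/N$ in \eqref{defJQ} yields the stated power of $N$. Everything else is supplied by Lemma~\ref{BS} and Theorem~\ref{myNorm}.
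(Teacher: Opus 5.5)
Your proof is correct and follows the paper's argument. You expand the square, with the Farey-type disjointness you verify made explicit where the paper leaves it implicit. You bound each diagonal piece by its integral over the whole line, then apply Plancherel and Theorem~\ref{myNorm} with $M=N/2$, and use $\sum_{a\mode q}|\hat{\psi}^*_q(a)|^2=g(q)$; there $\mode$ already means reduced residues, so your worry about $(a,q)=1$ does not arise, and, exactly as in the paper's own computation, you in fact obtain the factor $|S(0)|^2/N^2$, which implies the stated bound.
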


\begin{proof}
  We simply develop the square and obtain
  \begin{align*}
    \int_{\delta_1}^{1+\delta_1} |J_Q(\alpha)|^2d\alpha
    &=
      \frac{|S(0)|^2}{N^2}\sum_{q\le Q}
      \theta_q^2\sum_{\substack{1\le a\le q\\ (a,q)=1}}|\hat{\psi}^*_q(a)|^2
      \int_{\delta_1}^{1+\delta_1}
    \biggl|
    \hat{C}_{[0,N],\delta_q}\biggl(\alpha-\frac{a}{q}\biggr)
      \biggr|^2d\alpha
    \\&\le
    \frac{|S(0)|^2}{N^2}\sum_{q\le Q}\theta_q^2\sum_{a\mode
    q}|\hat{\psi}^*_q(a)|^2
    \int_{-\infty}^{\infty}
      \bigl|\hat{C}_{[0,N],\delta_q}(\beta)
    \bigr|^2d\beta
    \\&\le
    \frac{|S(0)|^2}{N^2}\sum_{q\le Q}
    \theta_q^2g(q)(N+\myc q(q+Q))
  \end{align*}
  by Theorem~\ref{myNorm}.
  The lemma is proved.
\end{proof}

\begin{lem}
  \label{scalSJQ}
  We have
   $\displaystyle \int_{\delta_1}^{1+\delta_1} S(\alpha)\overline{J_Q(\alpha)}d\alpha
    \ge
    \frac{|S(0)|^2}{N}
    \sum_{q\le Q}
    \theta_qg(q)$.
\end{lem}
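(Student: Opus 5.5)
The plan is to compute the scalar product exactly, term by term, by unfolding $J_Q$ from~\eqref{defJQ} and using Fourier inversion. Afterwards I only need pointwise properties of $C_{[0,N],\delta_q}$ and the local identity behind Theorem~\ref{L2S0}. As in the rest of the section, I translate the support of $(u_n)$ so that it lies in $[0,N]$; this only multiplies $S$ by a unimodular factor independent of the frequency and leaves everything unchanged. The $u_n$ are non-negative, so $S(0)$ is real and non-negative.

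\emph{Step 1 (support).} I will check that for $q\le Q$ and $1\le a\le q$, the support $(a/q-\delta_q,a/q+\delta_q)$ of $\hat{C}_{[0,N],\delta_q}(\cdot-a/q)$ (Lemma~\ref{BS}) lies inside $[\delta_1,1+\delta_1]$. For $a=q$ this is $\delta_q\le\delta_1$. For $a=1$ it reduces to $(q+Q-1)(1+Q)\ge q(q+Q)$, which holds for $q\le Q$. Consequently each integral over $[\delta_1,1+\delta_1]$ may be replaced by an integral over $\mathbb{R}$.

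\emph{Step 2 (Fourier inversion).} For each integer $n$, the function $C=C_{[0,N],\delta_q}$ is real, so $\overline{\hat C(\beta)}=\int C(u)e(-u\beta)\,du$. The substitution $\beta=\alpha-a/q$ and the inversion formula then give
\[
\int_{\mathbb{R}} e(n\alpha)\,\overline{\hat{C}(\alpha-a/q)}\,d\alpha = e(na/q)\,C(n).
\]
Summing over $a$ against $\overline{\hat\psi^*_q(a)}$ and using~\eqref{eq:28} of Lemma~\ref{proppsistarq} (conjugated, with $\psi^*_q$ real), I get
\[
\sum_{\substack{1\le a\le q\\(a,q)=1}}\overline{\hat\psi^*_q(a)}\,e(na/q)=\psi^*_q(n).
\]
By the property isolated in the proof of Theorem~\ref{L2S0}, $\psi^*_q(n)=g(q)$ whenever $u_n\neq0$. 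Hence the whole scalar product equals exactly
\[
\frac{S(0)}{N}\sum_{q\le Q}\theta_q\,g(q)\sum_n u_n\,C_{[0,N],\delta_q}(n).
\]

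\emph{Step 3 (majorant).} By Lemma~\ref{BS}, $C_{[0,N],\delta_q}(n)\ge 1$ for $n\in[0,N]$, and $u_n\ge0$, so $\sum_n u_nC_{[0,N],\delta_q}(n)\ge S(0)$. This yields the claimed lower bound provided each $\theta_q g(q)\ge 0$.

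The main obstacle is precisely this sign condition, since the statement does not assume $\theta_q\ge0$. I would first show that $g(q)\ge0$, using the consistency of $(\Kcal_q)$: each factor in~\eqref{defg} is non-negative because every point of $\Kcal_{p^{\alpha-1}}$ has at most $p$ preimages in $\Kcal_{p^\alpha}$. I would then restrict to the choice $\theta_q=1/(N+\myc q(q+Q))>0$, which is the one needed to optimise against Lemma~\ref{normJQ} in the proof of Theorem~\ref{WeightedSieve}. If signed $\theta_q$ really are required, this argument fails, and one would keep the exact identity of Step~2 instead of the inequality.
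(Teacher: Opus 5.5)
Your argument is correct and is the paper's proof with the steps reordered. The paper first applies Theorem~\ref{L2S0} to $(u_ne(n\beta))$ to replace $\sum_a\overline{\hat\psi^*_q(a)}S(a/q+\beta)$ by $g(q)S(\beta)$, then uses $\int_{\mathbb{R}}S(\beta)\overline{\hat C_{[0,N],\delta_q}(\beta)}\,d\beta=\sum_nu_nC_{[0,N],\delta_q}(n)$ and the majorant property of Lemma~\ref{BS}; you invert in each $n$ first and collapse the $a$-sum afterwards via $\psi^*_q(n)=g(q)$.

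Your sign caveat is a genuine correction, not a weakness. The paper's final appeal to Lemma~\ref{BS} also tacitly needs $\theta_q\ge0$, and the inequality fails without it: for $Q=1$, $\theta_1=-1$, $u_n=\1_{n=1}$, the left side is $-C_{[0,N],1/2}(1)/N<-1/N$ because $B(1/2)>1$. Only the positive optimal weights are needed for Theorem~\ref{WeightedSieve}. Likewise, your translation step really means moving the sets $\Kcal_q$ together with $(u_n)$, since the factor $e(-M\alpha)$ does depend on $\alpha$.
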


\begin{proof}
  Set $\delta_q=1/(q(q+Q))$.
  We find that
  \begin{align*}
    \int_{\delta_1}^{1+\delta_1} S(\alpha)\overline{J_Q(\alpha)}d\alpha
    &=
      \frac{\overline{S(0)}}{N}\sum_{q\le Q}\theta_q
      \sum_{\substack{1\le a\le q\\ (a,q)=1}}\overline{\hat{\psi}^*_q(a)}
      \int_{\frac{a}{q}-\delta_q}^{\frac{a}{q}+\delta_q}
    S(\alpha)
    \overline{\hat{C}_{[0,N],\delta_q}\biggl(\frac{a}{q}-\alpha\biggr)}
      d\alpha
    \\&=
    \frac{\overline{S(0)}}{N}\sum_{q\le Q}\theta_q
    \sum_{\substack{1\le a\le q\\ (a,q)=1}}\overline{\hat{\psi}^*_q(a)}
    \int_{-\delta_q}^{\delta_q}
    S\biggl(\frac{a}{q}+\beta\biggr)
    \overline{\hat{C}_{[0,N],\delta_q}(\beta)}
      d\beta
    \\&-
      \frac{\overline{S(0)}}{N}\sum_{q\le Q}
    \theta_qg(q)\int_{-\infty}^{\infty}
    S(\beta)
    \overline{\hat{C}_{[0,N],\delta_q}(\beta)}
    d\beta
  \end{align*}
  the third last line following from Theorem~\ref{L2S0} applied to
  $(u_ne(n\beta))$ and the fact that 
  $\delta_q$ does not depend on~$a$.
  We appeal to Lemma~\ref{BS} to infer the lower bound
  \begin{equation*}
    \int_{\delta_1}^{1+\delta_1}
    S(\alpha)\overline{J_Q(\alpha)}d\alpha
    ´\ge
    \frac{S(0)^2}{N}\sum_{q\le Q}
    \theta_qg(q).
  \end{equation*}
    The lemma follows swiftly from there.
\end{proof}

\begin{proof}[Proof of Theorem~\ref{WeightedSieve}]
  For this proof, let us set
\begin{equation}
  \label{defGH}
  G^{(1)}(Q)=\sum_{q\le Q}\theta_q g(q).
\end{equation}
  We start from Lemma~\ref{scalSJQ} to which we apply the Cauchy
  inequality to infer that
  \begin{equation*}
    \frac{|S(0)|^4}{N^2}
    G^{(1)}(Q)^2
    \le
    \biggl|
    \int_{\delta_1}^{1+\delta_1}
    S(\alpha)\overline{J_Q(\alpha)}d\alpha
    \biggr|^2
    \le
    \|S\|_2^2
    \frac{|S(0)|^2}{N}\sum_{q\le Q}
    \theta_q^2g(q)(N+\myc  q(q+Q))
  \end{equation*}
  by Lemma~\ref{normJQ} and the Parseval equality.
  Consequently, we get
  \begin{equation*}
    |S(0)|^2\le N\|S\|_2^2/T
  \end{equation*}
  where
  \begin{equation*}
    T = G^{(1)}(Q)^2/\sum_{q\le Q}
    \theta_q^2g(q)(N+\myc  q(q+Q)).
  \end{equation*}
  In order to optimize the $\theta_q$, we notice that
  \begin{equation*}
    \biggl|
    \sum_{q\le Q}\theta_q g(q)
    \biggr|^2
    \le
    \sum_{q\le Q}\frac{g(q)}{N+\myc  q(q+Q)}
    \sum_{q\le Q}
    \theta_q^2g(q)(N+\myc  q(q+Q))
  \end{equation*}
  with equality when $\theta_q=1/(N+\myc  q(q+Q))$. This choice gives
  our theorem.
\end{proof}

\section{Two case studies}
\label{CaseStudies}
\subsection*{The Brun-Titchmarsh Inequality}
Let us start with the emblematic case of the Brun-Titchmarsh
Inequality. We restrict our attention to the case of primes in an
interval without any congruence condition. We set
\begin{equation}
  \label{eq:3}
  G(Q)=\sum_{q\le Q}\frac{\mu^2(q)}{\varphi(q)}
  =\log Q+c_0^*+\Ocal(1/\sqrt{Q}),
  \qquad
  c_0^*=1.332\,582\cdots
\end{equation}
see for instance~\cite{Ramare*18-9} with and
Eq.\,(2.11) in \cite{Rosser-Schoenfeld*62} by J.\,B.~Rosser \&{} L.~Schoenfeld.
Let us further define
\begin{equation}
  \label{deff}
  f(t)=\frac{1}{N+\myc t(t+Q)}-\frac{1}{N}
  =\frac{-\myc}{N}\frac{(Q+t)t}{N+\myc t(t+Q)}
  =\frac{-\myc}{N}g(t).
\end{equation}
The first derivative of $g$ is swiftly computed:
\begin{equation*}
  g'(t) =
  \frac{(2t+Q)N}{(N+\myc
  t(t+Q))^2}.
\end{equation*}
We therefore obtain
\begin{multline*}
  \frac{N}{\myc}\biggl(\frac{G(Q)}{N}-L\biggr)
  =
    \sum_{q\le Q}\frac{\mu^2(q)}{\varphi(q)}
    \biggl(-\int_q^Qg'(t)dt+g(Q)\biggr)
  =
  -\int_1^Q G(t) g'(t)dt + G(Q)g(Q)
  \\=
  -\int_1^Q (\log t+c_0^*) g'(t)dt + (\Log Q+c_0^*)g(Q)
  +\Ocal\biggl(\int_1^Q  \frac{-g'(t)dt}{\sqrt{t}} + \frac{g(Q)}{\sqrt{Q}}\biggr)
\end{multline*}
so that
\begin{equation*}
  \frac{N}{\myc}\biggl(\frac{G(Q)}{N}-L\biggr)
  =
  c_0^*g(1)+\int_1^Q \frac{g(t)dt}{t}
  +\Ocal\biggl(\frac{Q^2}{N\sqrt{Q}}\biggr)
\end{equation*}
from which we infer that
\begin{equation}
  \label{stepfour}
  \frac{N}{\myc}\biggl(\frac{G(Q)}{N}-L\biggr)
  =\int_1^Q \frac{g(t)dt}{t}
  +\Ocal\bigl(Q^{3/2}/N\bigr).
\end{equation}
Furthermore
\begin{align*}
  \int_1^Q \frac{g(t)dt}{t}
  &=
   \frac{1}{\myc}\int_{1/Q}^1 \frac{1+u}{\theta+ u(u+1)}du
  \\& =
  \frac{1}{\myc}\int_{0}^1 \frac{1+u}{\theta+ u(u+1)}du
  +\Ocal(1/(\theta Q))
\end{align*}
where $\theta=NQ^{-2}\myc^{-1}$.
The next lemma computes the integral.
\begin{lem}
  \label{Int1}
  When $\theta>1/4$, we find that
  \begin{equation*}
    \int_0^1 \frac{(u+1)du}{\theta+ u(1+u)}
    =
    \frac{1}{2}\log(1+2\theta^{-1})
    +\frac{1}{\sqrt{4\theta-1}}
    \biggl(\arctan\frac{3}{\sqrt{4\theta-1}}
    -\arctan\frac{1}{\sqrt{4\theta-1}}
    \biggr).
  \end{equation*}
\end{lem}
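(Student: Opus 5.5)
The integral is elementary: we split the rational integrand into a logarithmic-derivative part and a remainder whose denominator has no real roots. The denominator is $P(u)=u^2+u+\theta$, with $P'(u)=2u+1$. We write
\begin{equation*}
  u+1=\tfrac12(2u+1)+\tfrac12,
\end{equation*}
so that
\begin{equation*}
  \int_0^1\frac{(u+1)\,du}{\theta+u(1+u)}
  =\frac12\int_0^1\frac{P'(u)}{P(u)}\,du+\frac12\int_0^1\frac{du}{P(u)}.
\end{equation*}
The first piece integrates to $\frac12\log\bigl(P(1)/P(0)\bigr)=\frac12\log\bigl((2+\theta)/\theta\bigr)=\frac12\log(1+2\theta^{-1})$. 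This gives the logarithmic term of the statement. We need $P>0$ on $[0,1]$, which holds since $\theta>1/4>0$.

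For the second piece we complete the square:
\begin{equation*}
  P(u)=\bigl(u+\tfrac12\bigr)^2+\frac{4\theta-1}{4}.
\end{equation*}
The hypothesis $\theta>1/4$ is exactly what makes the constant $(4\theta-1)/4$ positive, so the primitive is an arctangent rather than a logarithm. Setting $s=\sqrt{4\theta-1}$, the antiderivative of $1/P$ is
\begin{equation*}
  \frac{2}{s}\arctan\frac{2u+1}{s}.
\end{equation*}
We evaluate it between $u=0$ and $u=1$, where the argument $(2u+1)/s$ takes the values $1/s$ and $3/s$. Multiplying by the factor $\frac12$ gives
\begin{equation*}
  \frac{1}{s}\Bigl(\arctan\frac{3}{s}-\arctan\frac{1}{s}\Bigr),
\end{equation*}
which is the arctangent term of the statement. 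Adding the two pieces proves the lemma.

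There is no genuine obstacle here. The only points needing care are tracking the factor $\frac12$ against the $2/s$ coming from the arctangent primitive, and noting that $P>0$ on $[0,1]$, so the logarithm and the substitution are legitimate. One could check the result by differentiating with respect to $u$, or by comparing the limit $\theta\to\infty$, where both sides behave like $\frac{3}{2\theta}$.
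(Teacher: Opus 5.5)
Your proof is correct and follows the paper's argument exactly. The paper uses the same split of $u+1$ into $\tfrac12(2u+1)+\tfrac12$ to get the logarithmic term, then completes the square $(u+\tfrac12)^2+\theta-\tfrac14$ (with a substitution $w=(u+\tfrac12)/\sqrt{\theta-\tfrac14}$ equivalent to your arctangent primitive) to obtain the arctangent term.
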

To detect possible calculation errors, this lemma has been checked numerically.
\begin{proof}
  Let us call $I(\theta)$ the integral to evaluate.
  We find that
  \begin{align*}
    I(\theta)
    &=
      \frac{1}{2}\int_0^1 \frac{2u+1}{\theta+ u(1+u)}du
      +\frac{1}{2}\int_0^1 \frac{du}{\theta+ u(1+u)}
    \\&=
      \frac{1}{2}\log(1+2\theta^{-1})
      +\frac{1}{2}\int_0^1 \frac{du}{\theta+ u(1+u)}.
  \end{align*}
  We now find, on assuming that $\theta>1/4$ and with
  $w=(u+1/2)/\sqrt{\theta-\frac14}$, that
  \begin{align*}
    \int_0^1\frac{du}{\theta+ u(1+u)}
    &
      =\int_0^1\frac{du}{(u+\frac12)^2+\theta-\frac14}
      =\frac{1}{\sqrt{\theta-\frac14}}
      \int_{1/(2\sqrt{\theta-\frac14})}^{3/(2\sqrt{\theta-\frac14})}
      \frac{dw}{w^2+1}
    \\&=
    \frac{2}{\sqrt{4\theta-1}}
    \biggl(\arctan\frac{3}{\sqrt{4\theta-1}}
    -\arctan\frac{1}{\sqrt{4\theta-1}}
    \biggr).
  \end{align*}
  The lemma is proved.
\end{proof}

Here is the final bound we reach when $\theta\ge1/4$:
\begin{multline*}
  NL=
  \frac12\log \frac{N}{\myc \theta}+c_0^*
  -
  \frac{1}{2}\log(1+2\theta^{-1})
 \\ -
  \frac{1}{\sqrt{4\theta-1}}
    \biggl(\arctan\frac{3}{\sqrt{4\theta-1}}
    -\arctan\frac{1}{\sqrt{4\theta-1}}
    \biggr)
  +\Ocal(N^{-1/4})
  \\\ge \frac12\log N+0.531
\end{multline*}
when $N$ is large enough, 
on choosing $\theta=3/2$.
It is less than the $5/6$
obtained in~\cite{Montgomery-Vaughan*73}.

\subsection*{The number of twin primes in an interval}
As noted in the introduction, Montgomery \& Vaughan's weighted large sieve is
particularly sharp to bound above the number of twin primes in an interval.
Let us apply our theorem on this case to prove Theorem~\ref{p+2}.
Let us define
\begin{equation}
  \label{eq:8}
  G(Q)=\sum_{q\le Q}\mu^2(q)\prod_{\substack{p|q\\ p>2}}\frac{2}{p-2}.
\end{equation}
This function is evaluated in Lemma~2
of~\cite{Riesel-Vaughan*83}, where it is proved that
\begin{multline}
  \label{eq:9}
  2\mathfrak{S}_2^{-1}G(Q)=(\log Q)^2+A_3\log Q+A_4+\Ocal(1/Q^{1/3}),
  \\\text{where $A_3=6.02347\cdots$ and
  $A_4=1.11407\cdots$.}
\end{multline}
On proceeding as in the Brun-Titchmarsh case, we reach the equivalent
of~\eqref{stepfour} but with the restriction $Q\gg\sqrt{N}$, namely
\begin{equation}
  \label{stepfourbis}
  \frac{2\mathfrak{S}_2^{-1}N}{\myc}\biggl(\frac{G(Q)}{N}-L\biggr)
  =\int_1^Q (2\log t+A_3)\frac{Q+t}{N+\myc t(t+Q)}dt
  +\Ocal\biggl({\frac{\log^2 Q}{Q^{1/3}}}\biggr).
\end{equation}
We immediately modify the above in
\begin{equation*}
  \frac{2\mathfrak{S}_2^{-1}N}{\myc}\biggl(\frac{G(Q)}{N}-L\biggr)
  =\frac{1}{\myc}
  \int_{0}^{1} (2\log u+2\log Q+A_3)\frac{1+u}{\theta+ u(u+1)}du
  +\Ocal\biggl({\frac{\log^2 Q}{Q^{1/3}}}\biggr).
\end{equation*}
where again $\theta=NQ^{-2}\myc^{-1}$. We simplify this in
\begin{align*}
  8\mathfrak{S}_2^{-1}NL
  &=\log^2 \frac{N}{\myc\theta}+2A_3\log \frac{N}{\myc \theta}
  -4\log \frac{N}{\myc\theta}
  \int_{0}^{1} \frac{1+u}{\theta+ u(u+1)}du
  +\Ocal(1)
  \\&=(\log N)^2 +2\biggl(A_3-\log(\myc\theta)
  -
  2\int_{0}^{1} \frac{1+u}{\theta+ u(u+1)}du\biggr)\log N+\Ocal(1).
\end{align*}

When $\theta>1/4$, Lemma~\ref{Int1} applies, leading to the constant
in front of the $\log N$ to be
\begin{equation*}
  2\biggl(A_3-\log(\myc\theta)
  -
  \log(1+2\theta^{-1})
    -\frac{2}{\sqrt{4\theta-1}}
    \biggl(\arctan\frac{3}{\sqrt{4\theta-1}}
    -\arctan\frac{1}{\sqrt{4\theta-1}}
  \biggr)\biggr).
\end{equation*}
The choice $\theta=1.19$ leads to a constant $\ge 8.866$.

\section{Proof of Theorem~\ref{better}, generic material}

We first develop the argument in full generality.
We propose to modify the proof of Theorem~\ref{WeightedSieve} and to
investigate the approximation 
\begin{equation}
  \label{defJQb}
  J_Q(\alpha)=
  \frac{S(0)}{N}\sum_{q\le Q}\theta_q\sum_{\substack{1\le a\le q\\ (a,q)=1}}\hat{\psi}^*_q(a)
  \1_{|\alpha-\frac{a}{q}|\le \delta_q}\hat{C}_{[0,N],\xi^{-1}_q\delta_q}\biggl(\alpha-\frac{a}{q}\biggr)
\end{equation}
where ${C}_{[0,N],\delta_q}$ comes from Lemma~\ref{BS}, where the
non-negative weights $(\theta_q)$ are to be chosen, as well as the
parameters $\xi_q\in(0,1]$ and where
we recall that
\begin{equation}
  \label{defdeltaq}
 \delta_q^{-1}= q(q+Q).
\end{equation}
The $\psi^*_q$ are defined in~\eqref{defpsistarq}. 


\begin{lem}
  \label{normJQb}
  We have
  \begin{equation*}
    \int_{\delta_1}^{1+\delta_1} |J_Q(\alpha)|^2d\alpha\le
   \frac{|S(0)|^2}{N}\sum_{q\le Q}
    \theta_q^2g(q)(N+\myc \xi_q \delta_q^{-1})
  \end{equation*}
\end{lem}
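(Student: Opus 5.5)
The plan is to copy the proof of Lemma~\ref{normJQ}, the only new point being the truncation $\1_{|\alpha-a/q|\le\delta_q}$ together with the change of scale $\xi_q^{-1}\delta_q$.

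\textbf{Step 1: orthogonality.} The intervals $[a/q-\delta_q,a/q+\delta_q]$, for $q\le Q$ and $(a,q)=1$, are pairwise disjoint. They sit inside the Farey arcs, since
\[
\Bigl|\frac aq-\frac{a'}{q'}\Bigr|\ge \frac1{qq'}\ge\frac1{q(q+Q)}+\frac1{q'(q'+Q)}
\]
whenever $q,q'\le Q$; indeed $q'(q+Q)\le q(q+Q)+q'(q'+Q)$ holds because $q'Q\le q'(q'+Q)$. Hence, when we expand $|J_Q(\alpha)|^2$, the cross terms between distinct pairs $(a,q)\neq(a',q')$ vanish identically. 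This is the step where the truncating indicator is essential. Without it, the support of $\hat{C}_{[0,N],\xi_q^{-1}\delta_q}$ is $[-\xi_q^{-1}\delta_q,\xi_q^{-1}\delta_q]$, which is larger than the arc, and orthogonality would fail.

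\textbf{Step 2: bound each diagonal term.} We get
\[
\int_{\delta_1}^{1+\delta_1}|J_Q|^2\,d\alpha
=\frac{|S(0)|^2}{N^2}\sum_{q\le Q}\theta_q^2\sum_{\substack{1\le a\le q\\(a,q)=1}}|\hat\psi^*_q(a)|^2
\int_{-\delta_q}^{\delta_q}\bigl|\hat C_{[0,N],\xi_q^{-1}\delta_q}(\beta)\bigr|^2d\beta .
\]
We drop the truncation, which can only increase the inner integral, and extend the range to $\mathbb{R}$. Plancherel then turns this integral into $\int|C_{[0,N],\xi_q^{-1}\delta_q}|^2$. Shifting by $N/2$ identifies it with $\int|C_{[-N/2,N/2],\xi_q^{-1}\delta_q}|^2$, and Theorem~\ref{myNorm} bounds this by $N+\myc\,\xi_q\delta_q^{-1}$.

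\textbf{Step 3: sum over residues.} Extending the sum over $a$ to all residues modulo $q$ and using $\sum_{a\mode q}|\hat\psi^*_q(a)|^2=g(q)$ gives the claimed bound
\[
\frac{|S(0)|^2}{N}\sum_{q\le Q}\theta_q^2g(q)\bigl(N+\myc\,\xi_q\delta_q^{-1}\bigr).
\]

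The main obstacle is the disjointness check in Step~1; everything after it is a direct application of Plancherel, Theorem~\ref{myNorm}, and the identity for $\sum_a|\hat\psi^*_q(a)|^2$.
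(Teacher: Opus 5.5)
Your argument is essentially the paper's proof: expand the square using disjointness of the truncated arcs, drop the truncation and extend the $\beta$-integral to $\mathbb{R}$, apply Plancherel and Theorem~\ref{myNorm} with $M=N/2$, $\delta=\xi_q^{-1}\delta_q$, and finish with $\sum_{a\mode q}|\hat{\psi}^*_q(a)|^2=g(q)$. Your remark that the indicator $\1_{|\alpha-a/q|\le\delta_q}$ is what makes the cross terms vanish is right, since the support of $\hat{C}_{[0,N],\xi_q^{-1}\delta_q}$ is wider than the arc; the paper leaves this implicit.

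There are three small slips.

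\emph{Step~1.} The inequality you display, $q'(q+Q)\le q(q+Q)+q'(q'+Q)$, is true but is not the one required. Clearing denominators in $\frac{1}{qq'}\ge\frac{1}{q(q+Q)}+\frac{1}{q'(q'+Q)}$ gives $(q+Q)(q'+Q)\ge q(q+Q)+q'(q'+Q)$, i.e.\ $Q^2+qq'\ge q^2+q'^2$. For $q'\le q\le Q$ this follows from $Q^2\ge q^2$ and $qq'\ge q'^2$. Alternatively, argue via Farey arcs: the distance from $a/q$ to each adjacent mediant is $1/(q(q+q_\pm))\ge\delta_q$, because $q_\pm\le Q$.

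\emph{Step~3.} Nothing should be extended. The set $\{1\le a\le q:(a,q)=1\}$ is already a system of reduced residues, and $\sum_{a\mode q}$ runs over reduced residues only. Summing $|\hat{\psi}^*_q(a)|^2$ over all classes modulo~$q$ would give $q/|\Kcal_q|=\sum_{d|q}g(d)$ rather than $g(q)$, and hence a weaker bound.

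\emph{Prefactor.} Your Step~2, like the paper's proof, produces the prefactor $|S(0)|^2/N^2$. This implies the stated $|S(0)|^2/N$ version since $N\ge1$, so you should say so rather than silently switching.
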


\begin{proof}
  We simply develop the square and obtain
  \begin{align*}
    \int_{\delta_1}^{1+\delta_1} |J_Q(\alpha)|^2d\alpha
    &=
      \frac{|S(0)|^2}{N^2}\sum_{q\le Q}
      \theta_q^2\sum_{\substack{1\le a\le q\\ (a,q)=1}}|\hat{\psi}^*_q(a)|^2
      \int_{\delta_1}^{1+\delta_1}
    \biggl|\1_{|\alpha-\frac{a}{q}|\le \delta_q}
    \hat{C}_{[0,N],\xi_q^{-1}\delta_q}\biggl(\alpha-\frac{a}{q}\biggr)
      \biggr|^2d\alpha
    \\&\le
    \frac{|S(0)|^2}{N^2}\sum_{q\le Q}\theta_q^2\sum_{a\mode
    q}|\hat{\psi}^*_q(a)|^2
    \int_{-\infty}^{\infty}
      \bigl|\hat{C}_{[0,N],\xi_q^{-1}\delta_q}(\beta)
    \bigr|^2d\beta
    \\&\le
    \frac{|S(0)|^2}{N^2}\sum_{q\le Q}
    \theta_q^2g(q)(N+\myc \xi_q q(q+Q))
  \end{align*}
  by Theorem~\ref{myNorm}.
  The lemma is proved.
\end{proof}

\begin{lem}
  \label{scalSJQb}
  For any even function $F$, we have
  \begin{multline*}
    \int_{\delta_1}^{1+\delta_1} S(\alpha)\overline{J_Q(\alpha)}d\alpha
    \ge
    \frac{|S(0)|^2}{N}
    \sum_{q\le Q}
    \theta_qg(q)
    -
    \frac{2\overline{S(0)}}{N}\sum_{q\le Q}
    \theta_qg(q)
    \int_{\delta_q}^{\xi_q^{-1}\delta_q}
    F(\beta)
    \overline{\hat{C}_{[0,N],\xi_q^{-1}\delta_q}(\beta)}
    d\beta.
    \\-
    \frac{2\overline{S(0)}}{N}\sum_{q\le Q}
    \theta_qg(q)
    \int_{\delta_q}^{\xi_q^{-1}\delta_q}
    |S(\beta)-F(\beta)|\frac{c(\xi_q\beta/\delta_q)}{\xi_q^{-1}\delta_q}
    d\beta.
  \end{multline*}
  The function $c$ is defined in Lemma~\ref{BS}.
\end{lem}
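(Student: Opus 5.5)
The plan is to follow the proof of Lemma~\ref{scalSJQ} line by line, with one change: the truncation $\1_{|\alpha-a/q|\le\delta_q}$ now cuts the support of $\hat{C}_{[0,N],\xi_q^{-1}\delta_q}$, which extends to $|\beta|\le\xi_q^{-1}\delta_q$. First expand $\overline{J_Q}$ and substitute $\alpha=a/q+\beta$. The windows $[a/q-\delta_q,a/q+\delta_q]$ lie in $[\delta_1,1+\delta_1]$ up to periodicity of $S$, so
\begin{equation*}
\int_{\delta_1}^{1+\delta_1}S\overline{J_Q}
=\frac{\overline{S(0)}}{N}\sum_{q\le Q}\theta_q\int_{-\delta_q}^{\delta_q}
\sum_{\substack{1\le a\le q\\(a,q)=1}}\overline{\hat\psi^*_q(a)}\,S\Bigl(\frac aq+\beta\Bigr)
\overline{\hat C_{[0,N],\xi_q^{-1}\delta_q}(\beta)}\,d\beta .
\end{equation*}
We then apply Theorem~\ref{L2S0} to the twisted sequence $(u_ne(n\beta))$, which has the same support. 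The inner sum over $a$ becomes $g(q)S(\beta)$; here one uses that $\hat\psi^*_q(a)$ vanishes unless $(a,q)=1$, as in the earlier proof. This gives
\begin{equation*}
\frac{\overline{S(0)}}{N}\sum_q\theta_qg(q)\int_{-\delta_q}^{\delta_q}S(\beta)\overline{\hat C_{[0,N],\xi_q^{-1}\delta_q}(\beta)}\,d\beta .
\end{equation*}

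Next, write the truncated integral as the full integral over $|\beta|\le\xi_q^{-1}\delta_q$ minus the two tails $\delta_q\le|\beta|\le\xi_q^{-1}\delta_q$. For the full integral, Parseval turns it into $\sum_n u_nC_{[0,N],\xi_q^{-1}\delta_q}(n)$. Since $u_n\ge0$ and $C\ge\1_{[0,N]}$ by Lemma~\ref{BS}, this is at least $S(0)$, which yields the main term $|S(0)|^2\sum_q\theta_qg(q)/N$; here $\theta_q\ge0$ is used.

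For the tails, insert $S=F+(S-F)$. The evenness of $F$, together with the symmetry of the relevant part of $\hat C$, folds the two tails onto $[\delta_q,\xi_q^{-1}\delta_q]$ with a factor $2$. This is the delicate point: $\hat C_{[0,N]}$ carries a phase $e(N\beta/2)$, so the precise pairing needs care. I would handle it either by centring the interval, that is, replacing $S$ by its shift, or by noting that only real parts matter, since the left side is real after combining. The $F$ part is then kept exactly as written. For the $S-F$ part, bound $|\hat C_{[0,N],\xi_q^{-1}\delta_q}(\beta)|$ using Lemma~\ref{BS}(2). The scaling $\hat C_{\delta'}(t)=\delta'^{-1}\hat C_1(t/\delta')$ with $\delta'=\xi_q^{-1}\delta_q$, combined with the bound $c(\cdot)$, gives the factor $c(\xi_q\beta/\delta_q)/(\xi_q^{-1}\delta_q)$.

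The main obstacle is this bookkeeping of the boundary region. It requires checking the symmetry or phase issues so that the factor $2$ and the one-sided range are legitimate. It also requires applying Lemma~\ref{BS}(2) at the point $t=\xi_q\beta/\delta_q$ rather than at a fixed $\xi$; this works because $\hat J$ is monotone, which makes $c$ non-increasing.
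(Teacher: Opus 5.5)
Your plan is the paper's proof, and it is sound: expand $J_Q$, apply Theorem~\ref{L2S0} to $(u_ne(n\beta))$, write the truncated integral as the full one (equal to $\sum_n u_nC_{[0,N],\xi_q^{-1}\delta_q}(n)\ge S(0)$) minus the tails $\delta_q\le|\beta|\le\xi_q^{-1}\delta_q$, and bound those pointwise via \eqref{mb}; the paper only says the lemma ``follows swiftly'' and never addresses the folding you flag. That point closes most cleanly if you fold \emph{before} inserting $F$: since $u_n$ and $C$ are real, $S(-\beta)=\overline{S(\beta)}$ and $\hat C(-\beta)=\overline{\hat C(\beta)}$, so the tails equal $2\Re\int_{\delta_q}^{\xi_q^{-1}\delta_q}S\overline{\hat C}\,d\beta$, and splitting $S=F+(S-F)$ there gives the inequality for \emph{any} $F$, with the $F$-term read as its real part, so evenness of $F$ plays no role (just as well, since the $F$ used in Lemma~\ref{Out2} is not even).
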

With $\xi_q=1$, this is Lemma~\ref{scalSJQ}.
\begin{proof}
  Set $\delta_q=1/(q(q+Q))$.
  We find that
  \begin{align*}
    \int_{\delta_1}^{1+\delta_1} S(\alpha)\overline{J_Q(\alpha)}d\alpha
    &=
      \frac{\overline{S(0)}}{N}\sum_{q\le Q}\theta_q
      \sum_{\substack{1\le a\le q\\ (a,q)=1}}\overline{\hat{\psi}^*_q(a)}
      \int_{\frac{a}{q}-\delta_q}^{\frac{a}{q}+\delta_q}
    S(\alpha)
    \1_{|\alpha-\frac{a}{q}|\le \delta_q}
    \overline{\hat{C}_{[0,N],\xi_q^{-1}\delta_q}\biggl(\frac{a}{q}-\alpha\biggr)}
      d\alpha
    \\&=
    \frac{\overline{S(0)}}{N}\sum_{q\le Q}\theta_q
    \sum_{\substack{1\le a\le q\\ (a,q)=1}}\overline{\hat{\psi}^*_q(a)}
    \int_{-\delta_q}^{\delta_q}
    S\biggl(\frac{a}{q}+\beta\biggr)
    \overline{\hat{C}_{[0,N],\xi_q^{-1}\delta_q}(\beta)}
      d\beta
    \\&=
      \frac{\overline{S(0)}}{N}\sum_{q\le Q}
    \theta_qg(q)\int_{-\infty}^{\infty}
    S(\beta)
    \overline{\hat{C}_{[0,N],\xi_q^{-1}\delta_q}(\beta)}
    d\beta
    \\&\qquad-
    \frac{\overline{S(0)}}{N}\sum_{q\le Q}
    \theta_qg(q)\int_{|\beta|\ge\delta_q}
    S(\beta)
    \overline{\hat{C}_{[0,N],\xi_q^{-1}\delta_q}(\beta)}
    d\beta
  \end{align*}
  the third last line following from Theorem~\ref{L2S0} applied to
  $(u_ne(n\beta))$ and the fact that 
  $\delta_q$ does not depend on~$a$.
  Lemma~\ref{BS} gives us the bound
  \begin{equation*}
    \big|\xi_q^{-1}\delta_q\hat{C}_{[0,N],\xi_q^{-1}\delta_q}(\beta)\bigr|
    \le c(\xi_q)
  \end{equation*}
  when $|\beta|\ge\delta_q$, but it also implies
  \begin{equation}
    \label{mb}
    \big|\xi_q^{-1}\delta_q\hat{C}_{[0,N],\xi_q^{-1}\delta_q}(\beta)\bigr|
    \le c(\xi_q|\beta|/\delta_q).
  \end{equation}
  The lemma follows swiftly from there.
\end{proof}

\section{Auxiliaries}

Let us first recall (a simpler version of)
\cite[Theorem~13.1]{Ramare*22-0}.
\begin{lem}
  \label{order}
  Let $Q\ge2$ be a fixed parameter. Assume that $f$ is a non-negative
  multiplicative function such that
  \begin{equation*}
    \forall D\in[1,Q],\quad\sum_{\substack{p\ge 2,\nu\ge1\\ p^\nu\le
        D}}
    f(p^\nu)\log(p^\nu)\le KD
  \end{equation*}
  for some constant $K\ge0$. Then we have
  \begin{equation*}
    \sum_{d\le Q}f(d)\le \frac{(K+1)Q}{\log Q}\sum_{d\le Q}f(d)/d.
  \end{equation*}
\end{lem}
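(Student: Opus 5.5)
The plan is to prove the inequality by a Rankin-type comparison that converts the sum of $f(d)$ into a sum of $f(d)\log d$. We write $f=\1\star h$ in the sense of the additive decomposition $\log d=\sum_{p^\nu\| d}\log p^\nu$. The starting point is the identity
\begin{equation*}
  \sum_{d\le Q}f(d)\log d=\sum_{d\le Q}f(d)\sum_{p^\nu\|d}\log(p^\nu)
  =\sum_{\substack{p^\nu\le Q}}f(p^\nu)\log(p^\nu)\sum_{\substack{m\le Q/p^\nu\\ (m,p)=1}}f(m),
\end{equation*}
which holds because $f$ is multiplicative and $d=mp^\nu$ with $(m,p)=1$. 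We drop the coprimality condition, which is legitimate since $f\ge0$, and swap the order of summation to get
\begin{equation*}
  \sum_{d\le Q}f(d)\log d\le\sum_{m\le Q}f(m)\sum_{p^\nu\le Q/m}f(p^\nu)\log(p^\nu)\le KQ\sum_{m\le Q}f(m)/m,
\end{equation*}
by the hypothesis applied with $D=Q/m\in[1,Q]$.

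It then remains to compare $\sum f(d)\log Q$ with $\sum f(d)\log d$. We write $\log Q=\log d+\log(Q/d)$ and use the elementary bound $\log(Q/d)\le Q/d$. This is valid for $d\le Q$ since $\log x\le x$ for $x\ge1$. It gives
\begin{equation*}
  \log Q\sum_{d\le Q}f(d)\le\sum_{d\le Q}f(d)\log d+Q\sum_{d\le Q}f(d)/d\le (K+1)Q\sum_{d\le Q}f(d)/d.
\end{equation*}
Dividing by $\log Q>0$, which is allowed since $Q\ge2$, yields the claim.

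The only delicate step is the interchange in the first display. One has to check that every pair $(m,p^\nu)$ with $mp^\nu\le Q$ is counted, that the constraint $p^\nu\le Q/m$ keeps $D$ in $[1,Q]$, and that positivity justifies dropping $(m,p)=1$. Everything else is routine. Should the factor come out as $K+1$ with a slightly different normalisation, one can instead use $\log(Q/d)\le Q/d-1$ to absorb constants.
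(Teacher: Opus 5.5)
Your proof is correct and complete. Splitting $\log d=\sum_{p^\nu\|d}\log(p^\nu)$ gives $\sum_{d\le Q}f(d)\log d\le KQ\sum_{m\le Q}f(m)/m$, and adding $\sum_{d\le Q}f(d)\log(Q/d)\le Q\sum_{d\le Q}f(d)/d$ then yields the bound. This is the standard argument behind Theorem~13.1 of \cite{Ramare*22-0}, which the paper quotes rather than reproves, so your route is essentially the intended one. The opening remark about a convolution decomposition of $f$ plays no role and can be deleted. The closing hedge is also unnecessary, since the constant comes out as exactly $K+1$.
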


\begin{lem}
  \label{orderp}
  Under the hypotheses of Lemma~\ref{order}, we have
  \begin{equation*}
    \sum_{d\le Q}\frac{f(d)}{\sqrt{d}}
    \ll_K \frac{\sqrt{Q}}{\log Q}\sum_{d\le Q}f(d)/d.
  \end{equation*}
\end{lem}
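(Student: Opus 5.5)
The plan is to deduce Lemma~\ref{orderp} from Lemma~\ref{order} by partial summation, using a dyadic decomposition of $[1,Q]$. Set
\begin{equation*}
A(t)=\sum_{d\le t}f(d),\qquad B(t)=\sum_{d\le t}f(d)/d .
\end{equation*}
The hypothesis of Lemma~\ref{order} is required for every $D\in[1,Q]$. It is therefore also available for every $D\in[1,t]$ when $t\le Q$, and Lemma~\ref{order} may be applied with $Q$ replaced by any $t\in[2,Q]$. This gives
\begin{equation*}
A(t)\le (K+1)\,t\,B(t)/\log t\le (K+1)\,t\,B(Q)/\log t
\end{equation*}
since $f\ge0$ makes $B$ non-decreasing. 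For $t\in[1,2)$ we use the trivial bound $A(t)\le A(2)\ll_K B(Q)$.

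By partial summation,
\begin{equation*}
\sum_{d\le Q}\frac{f(d)}{\sqrt d}=\frac{A(Q)}{\sqrt Q}+\frac12\int_1^Q A(t)\,t^{-3/2}\,dt .
\end{equation*}
The boundary term is at most $(K+1)\sqrt Q\,B(Q)/\log Q$, which already has the desired shape. Inserting the bound on $A(t)$ into the integral reduces it to $\ll_K B(Q)\bigl(1+\int_2^Q t^{-1/2}(\log t)^{-1}\,dt\bigr)$.

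The only point needing care is that this last integral is $\ll\sqrt Q/\log Q$, and not merely $\ll\sqrt Q$. I would split it at $\sqrt Q$. On $[2,\sqrt Q]$ the integral is at most $2Q^{1/4}$. On $[\sqrt Q,Q]$ we have $\log t\ge\frac12\log Q$, so the contribution is at most $4\sqrt Q/\log Q$. Since $Q^{1/4}\ll\sqrt Q/\log Q$, the two pieces together are $\ll\sqrt Q/\log Q$. The constant term $B(Q)$ is absorbed for the same reason, because $\sqrt Q/\log Q\gg1$ for $Q\ge2$. Summing the three contributions yields the claim with an implied constant depending only on $K$.
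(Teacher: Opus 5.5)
Your proof is correct and follows the same route as the paper. Both write $1/\sqrt d=1/\sqrt Q+\int_d^Q t^{-3/2}\,dt/2$, apply Lemma~\ref{order} at each $t\le Q$, and use that $\sum_{d\le t}f(d)/d$ is non-decreasing in $t$. You also make explicit two steps the paper skips: you treat $t\in[1,2)$ trivially, where inserting $t/\log t$ would make the integral diverge, and you split $\int_2^Q t^{-1/2}(\log t)^{-1}\,dt$ at $\sqrt Q$ (your argument never actually uses the dyadic decomposition announced in its opening sentence).
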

\begin{proof}
  We readily find that
  \begin{align*}
    \sum_{d\le Q}\frac{f(d)}{\sqrt{d}}
    &=
    \sum_{d\le Q}f(d)\biggl(\int_{d}^Q\frac{dt}{2t^{3/2}}+\frac{1}{\sqrt{Q}}\biggr)
    \\&\le
    \int_1^Q  \sum_{d\le t}f(d)
    \frac{dt}{2t^{3/2}}+\frac{1}{\sqrt{Q}}
    \sum_{d\le Q}f(d)\ll \frac{\sqrt{Q}}{\log Q}
    \sum_{d\le Q}f(d)/d
  \end{align*}
  by Lemma~\ref{order} and since $\sum_{d\le t}f(d)/d\le \sum_{d\le
    D}f(d)/d$. The lemma follows readily.
\end{proof}

\begin{lem}\label{density}
Let $h$ be a non-negative multiplicative function.
Let $\kappa$, $L$ and $A$ be three non-negative real parameters
such that
\begin{equation*}
  \left\{
    \begin{array}{l}
      \displaystyle
      \sum_{\substack{ p\ge2, \nu\ge1\\ p^{\nu}\le Q}}
      h\bigl(p^{\nu}\bigr)\Log\bigl(p^{\nu}\bigr)=
      \kappa\Log Q+\Oc^*(L)\qquad(Q\ge1),
      \\\displaystyle
      \sum_{p\ge2}
      \sum_{\substack{\nu,k\ge1}}
      h\bigl(p^k\bigr)h\bigl(p^{\nu}\bigr)\Log\bigl(p^{\nu}\bigr)
      \le A.
    \end{array}
  \right.
\end{equation*}
Then, when $D\ge\exp(2(L+A))$, we have
$$
\sum_{d\le D}h(d)= C\left(\Log D\right)^{\kappa}
\left(1+\Oc^*(B/\Log D)\right)
$$
with $B=\displaystyle2(L+A)\bigl(1+2(\kappa+1)e^{\kappa+1}\bigr)$ and
\begin{equation}
  C=\displaystyle\frac{1}{\Gamma(\kappa+1)}
     \prod_{p\ge2}\biggl\{
     \biggl(1-\frac1p\biggr)^{\kappa}
      \sum_{\nu\ge0}h\bigl(p^{\nu}\bigr)
      \biggr\}.\label{defC}
\end{equation}
\end{lem}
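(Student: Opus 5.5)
We follow the classical Levin--Fainleib route: apply the Chebyshev-type identity $\log d=\sum_{p^\nu\|d}\log p^\nu$ to $M(D)=\sum_{d\le D}h(d)$, turn it into an integral equation, and compare its solution with the model $C(\log D)^\kappa$. Multiplicativity gives
\begin{equation*}
  \sum_{d\le D}h(d)\log d
  =\sum_{\substack{p^\nu\le D}}h(p^\nu)\log(p^\nu)\sum_{\substack{m\le D/p^\nu\\ p\nmid m}}h(m).
\end{equation*}
We then drop the condition $p\nmid m$. Writing $m=p^k m'$, the correction is bounded by $\sum_p\sum_{\nu,k\ge1}h(p^k)h(p^\nu)\log p^\nu\, M(D)\le A\,M(D)$. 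So
\begin{equation*}
  \sum_{d\le D}h(d)\log d=\sum_{p^\nu\le D}h(p^\nu)\log(p^\nu)\,M(D/p^\nu)+\Oc^*(A\,M(D)).
\end{equation*}
Set $R(t)=\sum_{p^\nu\le t}h(p^\nu)\log p^\nu=\kappa\log t+\Oc^*(L)$. By Stieltjes summation the main term becomes
\begin{equation*}
  \int_1^D M(D/t)\,dR(t)=\kappa\int_1^D M(D/t)\frac{dt}{t}+\Oc^*(L\,M(D)),
\end{equation*}
after integrating the error part by parts and using that $M$ is non-decreasing. The left side equals $M(D)\log D-\int_1^D M(t)\,dt/t$. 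Writing $x=\log D$ and $m(x)=M(e^x)$, we obtain
\begin{equation*}
  x\,m(x)=(\kappa+1)\int_0^x m(y)\,dy+\Oc^*\bigl((L+A)\,m(x)\bigr).
\end{equation*}

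Next we solve this integral inequality. Put $I(x)=\int_0^x m$, so $m=I'$ and $I'(x)(x+\Oc^*(L+A))=(\kappa+1)I(x)$. For $x\ge 2(L+A)$ this reads
\begin{equation*}
  \frac{I'}{I}=\frac{\kappa+1}{x}\bigl(1+\Oc^*(2(L+A)/x)\bigr),
\end{equation*}
which integrates to $I(x)=c\,x^{\kappa+1}(1+\Oc(\ldots/x))$. Feeding this back gives $m(x)=(\kappa+1)c\,x^\kappa(1+\Oc^*(B/x))$, with a constant $c$ not yet identified. To get the explicit $B=2(L+A)(1+2(\kappa+1)e^{\kappa+1})$, we must keep track of how the relative error $2(L+A)/x$ propagates through the exponential of its integral, from the starting point $x_0=2(L+A)$ up to $x$. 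The factor $e^{\kappa+1}$ should come from bounding $(x/x_0)^{\kappa+1}$-type ratios and $\exp\bigl((\kappa+1)\cdot 2(L+A)/x_0\bigr)$ on the initial range.

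Finally we identify the constant $C$. We compare Dirichlet series: $\sum h(d)d^{-s}$ at $s=1+\sigma$ behaves like $\Gamma(\kappa+1)\,C\,\sigma^{-\kappa}$, by Abelian summation of the asymptotic just obtained. It also equals $\prod_p\sum_\nu h(p^\nu)p^{-\nu s}$, which we compare with $\zeta(1+\sigma)^\kappa\sim\sigma^{-\kappa}$. The product $\prod_p(1-1/p)^\kappa\sum_\nu h(p^\nu)$ converges, because the first hypothesis makes $\sum h(p^\nu)\log p^\nu$ behave like $\kappa\log$ on average. Letting $\sigma\to0$ yields \eqref{defC}. Convergence of the Euler product and the Abelian limit are routine once the asymptotic is known.

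The main obstacle is the second step: not the asymptotic itself, but getting the fully explicit error term with the stated $B$, valid for all $D\ge\exp(2(L+A))$. This requires careful bookkeeping of the initial range $x\le x_0$, where only the crude bound $m(x)\le m(x_0)$ is available, and of the Grönwall-type integration.
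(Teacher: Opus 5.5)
The paper gives no argument for this lemma beyond a pointer to Wirsing, so your proposal has to stand on its own.

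\textbf{Steps 1 and 2 are sound.} The identity for $\sum_{d\le D}h(d)\log d$, the removal of $p\nmid m$ at cost at most $A\,M(D)$, and the main term (most cleanly written $\sum_{d\le D}h(d)R(D/d)$) do give $x\,m(x)=(\kappa+1)I(x)+\Oc^*((L+A)m(x))$. No bookkeeping on $[0,x_0]$ is needed. For $x\ge 2(L+A)$ one has $\bigl(\log(I(x)/x^{\kappa+1})\bigr)'=\Oc^*\bigl(2(\kappa+1)(L+A)/x^2\bigr)$. Integrating from $x$ to $\infty$, not from $x_0$, gives $I(x)=c\,x^{\kappa+1}\bigl(1+\Oc^*(2(\kappa+1)(L+A)e^{\kappa+1}/x)\bigr)$. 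Then $m(x)=(\kappa+1)I(x)/(x-\theta(L+A))$ with $|\theta|\le1$ produces exactly $B=2(L+A)(1+2(\kappa+1)e^{\kappa+1})$, with constant $(\kappa+1)c=\lim_{x\to\infty}m(x)/x^\kappa$.

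\textbf{The gap is the identification of this constant, which you call routine.} First a slip: since $h(p)\approx\kappa/p$, the Dirichlet series must be taken at $s=\sigma\to0^+$, not at $1+\sigma$. More seriously, the interchange
\[
\lim_{\sigma\to0^+}\prod_p(1-p^{-1-\sigma})^\kappa\sum_{\nu\ge0}h(p^\nu)p^{-\nu\sigma}
=\prod_p(1-1/p)^\kappa\sum_{\nu\ge0}h(p^\nu)
\]
does not follow from the hypotheses. The first hypothesis only controls $h$ summed over $p^\nu\le Q$, whereas in the Euler factor $h(p^2)$ is damped by $p^{-2\sigma}$ rather than $p^{-\sigma}$.

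Concretely, take $h(p^2)=1/p$ and $h(p^\nu)=0$ for $\nu\neq2$. Both hypotheses hold with $\kappa=1$, the first by Mertens. Yet $\sum_{d\le D}h(d)=\sum_{m\le\sqrt D}\mu^2(m)/m\sim(3/\pi^2)\log D$, while \eqref{defC} gives $\prod_p(1-p^{-2})=6/\pi^2$. So the statement as written is not implied by its hypotheses. Letting the density of $h(p^2)$ oscillate slowly, compensated in $h(p)$ so the first hypothesis survives, one can even make the product in \eqref{defC} diverge. Your claim that it converges therefore fails too.

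\textbf{What your method does prove, and how to repair it.} Your argument gives the lemma with $C=\lim_{\sigma\to0^+}\sigma^\kappa\sum_d h(d)d^{-\sigma}/\Gamma(\kappa+1)$. To reach \eqref{defC} you must add a hypothesis keeping higher prime powers out of the density, for instance $\sum_p\sum_{\nu\ge2}h(p^\nu)\log(p^\nu)<\infty$. Under it:
\begin{enumerate}
\item $\sum_{p\le Q}h(p)\log p=\kappa\log Q+\Oc(1)$;
\item $\sum_p(h(p)-\kappa/p)$ converges by partial summation;
\item Abel's theorem for Dirichlet series, together with $\sum_p\bigl(\sum_{\nu\ge1}h(p^\nu)\bigr)^2\le A/\log 2$, justifies the interchange.
\end{enumerate}
Nothing is lost in the paper's two case studies, where the relevant function is supported on squarefree integers.
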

It is not difficult by following Wirsing in~\cite{Wirsing*61}

\begin{lem}\label{density+}
  Let $f$ be a non-negative multiplicative function
and  $\kappa$ be non-negative real parameter
such that
\begin{equation*}
  \left\{
    \begin{array}{l}
      \displaystyle
      \sum_{\substack{ p\ge2, \nu\ge1\\ p^{\nu}\le Q}}
      f\bigl(p^{\nu}\bigr)\Log\bigl(p^{\nu}\bigr)=
      \kappa Q+\Oc(Q/\Log(2Q))\qquad(Q\ge1),
      \\\displaystyle
      \sum_{p\ge2}
      \sum_{\substack{\nu,k\ge 1\\ p^{\nu+k}\le Q}}
      f\bigl(p^k\bigr)f\bigl(p^{\nu}\bigr)\Log\bigl(p^{\nu}\bigr)
      \ll \sqrt{Q},
    \end{array}
  \right.
\end{equation*}
then, for any $C^1$-function $F$ with a bounded derivative and any $\eta\in[0,1]$, we have
\begin{multline*}
    \sum_{\eta Q\le q\le Q}f(q)F(q/Q)
  =
    \kappa C \cdot Q(\log Q)^{\kappa-1}\int_\eta^1 F(t)dt
    \\+\Ocal\biggl(\int_\eta^1 t(1+|\log t|)|F'(t)|dt
    (\log Q)^{\kappa-2}Q\biggr)
\end{multline*}
where~$C$ is as in Lemma~\ref{density}.
\end{lem}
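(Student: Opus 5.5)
We prove Lemma~\ref{density+} by partial summation from a smooth asymptotic for the unweighted partial sums $\Sigma(x)=\sum_{q\le x}f(q)$. We obtain that asymptotic through the Wirsing/Levin--Fa\u\i nle\u\i b device. Write $\log q=\sum_{p^\nu\|q}\log p^\nu$ and use multiplicativity in the form $f(q)=f(p^\nu)f(m)$ with $q=p^\nu m$, $p\nmid m$. This gives
\begin{equation*}
  \sum_{q\le x}f(q)\log q=\sum_{p^\nu\le x}f(p^\nu)\log(p^\nu)\,\Sigma(x/p^\nu)
  -\sum_{p}\sum_{\nu,k\ge1}f(p^\nu)\log(p^\nu)f(p^k)\,\Sigma'_{p}(x/p^{\nu+k}),
\end{equation*}
where the second sum corrects for $p\mid m$. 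Its size is controlled by the second hypothesis. Since the summands are non-negative, the first hypothesis yields crude bounds such as $\Sigma(x)\ll x(\log x)^{\kappa-1}$ via Lemma~\ref{order}. With these bounds, the correction term is $\Ocal(x(\log x)^{\kappa-3/2})$ or better.

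Next we insert the first hypothesis into the main term, writing $\sum_{p^\nu\le y}f(p^\nu)\log p^\nu=\kappa y+R(y)$ with $R(y)\ll y/\log(2y)$. This produces the integral equation
\begin{equation*}
  \Sigma(x)\log x-\int_1^x\frac{\Sigma(t)}{t}\,dt=\kappa\int_1^x\Sigma(x/y)\,dy+\Ocal\bigl(x(\log x)^{\kappa-2}\bigr).
\end{equation*}
Solving this by the standard Wirsing bootstrap, after dividing by $x$ and using the generating Dirichlet series to identify the constant, gives
\begin{equation*}
  \Sigma(x)=\kappa C\,x(\log x)^{\kappa-1}\bigl(1+\Ocal(1/\log x)\bigr).
\end{equation*}
The constant is the one of \eqref{defC}: compare with Lemma~\ref{density} applied to $h(d)=f(d)/d$. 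The first hypothesis on $f$ transfers to the log-type hypothesis on $h$ with $L\ll1$ by partial summation, and the second hypothesis transfers as well. Then $\sum_{d\le D}f(d)/d=C(\log D)^\kappa(1+\Ocal(1/\log D))$. Partial summation in reverse forces the leading coefficient of $\Sigma$ to be $\kappa C$. Establishing this remainder $\Ocal(x(\log x)^{\kappa-2})$ uniformly is the main obstacle. The $Q/\log Q$ error in the prime-power hypothesis is exactly borderline, so we must check that it contributes only $x(\log x)^{\kappa-2}$ after convolution with $\Sigma(x/y)\ll (x/y)\log^{\kappa-1}$. This uses $\int_1^x y^{-1}(\log 2y)^{-1}(\log x/y)^{\kappa-1}dy\ll(\log x)^{\kappa-1}\log\log x$ in general. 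A $\log\log$ loss must be avoided by splitting $y\le\sqrt x$ and $y>\sqrt x$, treating the second range with the crude bound.

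Finally we perform Abel summation on $[\eta Q,Q]$:
\begin{equation*}
  \sum_{\eta Q\le q\le Q}f(q)F(q/Q)=F(1)\Sigma(Q)-F(\eta)\Sigma(\eta Q^-)-\int_\eta^1\Sigma(tQ)F'(t)\,dt.
\end{equation*}
Into this we insert $\Sigma(tQ)=\kappa C\,tQ(\log Q+\log t)^{\kappa-1}+\Ocal(tQ(\log tQ)^{\kappa-2})$. We then expand $(\log Q+\log t)^{\kappa-1}=(\log Q)^{\kappa-1}+\Ocal(|\log t|(\log Q)^{\kappa-2})$, valid while $tQ\ge2$. The tiny range $tQ<2$ is handled trivially. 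Integrating back by parts, the main terms reassemble to $\kappa CQ(\log Q)^{\kappa-1}\int_\eta^1F(t)\,dt$. The error terms are bounded by $Q(\log Q)^{\kappa-2}\int_\eta^1t(1+|\log t|)|F'(t)|\,dt$. The boundary errors at $t=\eta$ and $t=1$ must be absorbed the same way, by writing $F(\eta)=F(1)-\int_\eta^1F'$, which is the claimed bound.
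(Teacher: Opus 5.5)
Your route is the paper's: an asymptotic $\Sigma(x)=\kappa C\,x(\log x)^{\kappa-1}\bigl(1+\Ocal(1/\log x)\bigr)$, then Abel summation on $[\eta Q,Q]$ and expansion of $(\log tQ)^{\kappa-1}$. The paper does not prove that asymptotic. It asserts it as \eqref{eq:18}, claiming that the $o(1)$ of the cited Theorem~13.4 is $\Ocal(1/\log(D+2))$. You try to derive it, and it fails precisely where you pass from $R(y)\ll y/\log(2y)$ to $\Ocal(1)$-type control. In fact it is false for $\kappa=2$.

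Partial summation gives $\sum_{p^\nu\le D}f(p^\nu)\log(p^\nu)/p^\nu=\kappa\log D+\Ocal(1)+\int_1^D R(y)y^{-2}\,dy$. The integral is only $\Ocal(\log\log D)$, so Lemma~\ref{density} is not available with $L\ll1$. Likewise, the $\log\log x$ in $\sum_{m\le x}f(m)R(x/m)$ comes from the range $y=x/m\le\sqrt{x}$. Splitting cannot remove it when $R(y)$ keeps a constant sign of size $y/\log y$.

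Concretely, let $f$ be supported on square-free integers with $f(p)=2+2/\log p$. Then $\sum_{p\le Q}f(p)\log p=2\vartheta(Q)+2\pi(Q)=2Q+\Ocal(Q/\log 2Q)$, and the second hypothesis is $\ll\vartheta(\sqrt Q)$. Yet $\sum_{p\le D}f(p)\log p/p=2\log D+2\log\log D+\Ocal(1)$. Write $f=\tau*g$ with $\tau$ the divisor function, so that $g(p)=2/\log p$, $\sum_m|g(m)|/m<\infty$ and $\sum_m g(m)/m=\kappa C$. Summing $\sum_m g(m)\sum_{n\le x/m}\tau(n)$ produces the term $-x\sum_{m\le x}g(m)\log m/m=-2\kappa C\,x\log\log x+\Ocal(x)$, because $g(p)\log p/p=2/p$. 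Hence
\begin{equation*}
  \sum_{n\le x}f(n)=\kappa C\,x\log x-2\kappa C\,x\log\log x+\Ocal(x).
\end{equation*}
So neither \eqref{eq:18} nor the lemma itself holds as stated; take $F(t)=t$, $\eta=0$. A remainder $\Ocal(Q/(\log 2Q)^{1+\varepsilon})$ makes both of your steps work, and the prime number theorem supplies it in the paper's two applications.

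Note also that your integral equation only needs a remainder $\Ocal(x(\log x)^{\kappa-1})$, since $\Sigma$ is multiplied by $\log x$. Your $p\mid m$ correction is of exactly that order (already the term $p=2$, $\nu=k=1$), not $\Ocal(x(\log x)^{\kappa-3/2})$, and that is harmless.

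Your last step fails independently. Abel summation leaves $F(1)\Sigma(Q)$, whose error $|F(1)|\,Q(\log Q)^{\kappa-2}$ cannot be absorbed into $\int_\eta^1t(1+|\log t|)|F'(t)|\,dt$. For $F\equiv1$ the claimed error term vanishes. For $\eta=1$ and integer $Q$, the lemma would assert $f(Q)F(1)=0$.

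Writing $F(\eta)=F(1)-\int_\eta^1F'$ does dispose of the endpoint $\eta$, since $t(1-\log t)$ increases on $(0,1]$, but only up to a further multiple of $|F(1)|$. The conclusion must therefore carry an extra $\Ocal\bigl(|F(1)|\,Q(\log Q)^{\kappa-2}\bigr)$. Moreover, your expansion of $(\log Q+\log t)^{\kappa-1}$ for $tQ\ge2$ needs $\kappa\ge1$, which is all the paper's sketch treats. The paper's ``the reader will readily close the proof'' hides the same omission; in its applications the extra term is absorbed in the $o(1)$.
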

In essence, this lemma is rather usual but we need the precision of
the error term in our two instances of application as we may not
assume that $F'(t)$ is bounded around~0. 
\begin{proof}
  In general, let us set
  $G(D)=\sum_{ q\le D}f(q)$. 
  On following the proof of
  \cite[Theorem 13.4]{Ramare*22-0}, the reader will readily see that
  the error term stated there as $o(1)$ is in fact
  $\Ocal(1/\log(D+2))$, i.e.
  \begin{equation}
    \label{eq:18}
    G(D)=C\kappa D(\log D)^{\kappa-1}(1+\Ocal(1/\log(D+2)).
  \end{equation}
  We may always assume that $\eta\ge1/Q$.
  We find that
\begin{align*}
  \sum_{\eta Q \le q\le Q}f(q)F(q/Q)
  &=
  -\sum_{\eta Q\le q\le Q}f(q)\int_{q/Q}^1 F'(t)dt
  +F(1)(G(Q)-G(\eta Q))
  \\&=-\int_\eta^1 (G(tQ)-G(\eta Q)) F'(t)dt
  +F(1)(G(Q)-G(\eta Q))
  \\&=
  -\int_\eta^1 G(tQ) F'(t)dt
  +F(1)G(Q)-F(\eta)G(\eta Q).
\end{align*}
We may continue as follows:
\begin{equation*}
  -\int_\eta^1 G(tQ) F'(t)dt
  =
    -\int_\eta^1 C\kappa(\log tQ)^{\kappa-1}tQ F'(t)dt
    +\int_\eta^1 \Ocal\bigl((\log (tQ+2))^{\kappa-2}\bigr)tQ F'(t)dt.
  \end{equation*}
  Since $t\ge \eta\ge 1/Q$, we find that
  \begin{equation*}
    (\log tQ)^{\kappa-1}
    =(\log Q)^{\kappa-1} +\Ocal(|\log t|(\log Q)^{\kappa-2}).
  \end{equation*}
  Consequently, when $\kappa\ge2$, we find that
  \begin{multline*}
    -\int_\eta^1 G(tQ) F'(t)dt
    =-\int_\eta^1 C\kappa(\log Q)^{\kappa-1}tQ F'(t)dt
    \\+\Ocal\biggl(Q(\log Q)^{\kappa-2}\int_\eta^1t|F'(t)(1+|\log t|)|dt\biggr).
  \end{multline*}
  The reader will readily close the proof from there in this case. Let
  us next consider the case $\kappa\in[1,2]$. When $\eta Q\ge
  \sqrt{Q}$, we may use $(\log (2+tQ))^{\kappa-2}\ll(\log
  Q)^{\kappa-2}$ and the proof gets readily closed. Otherwise, the
  part from $\eta$ to $1/\sqrt{Q}$ is easily shown to be negligible. 
  The proof of lemma, or more precisely our sketch of it, is now complete.
\end{proof}

\section{Proof of Theorem~\ref{better} and initiating the proof of Theorem~\ref{bettere}}
\label{sketch}

Let us first recall Corollary~2.6 of \cite{Ramare*26-1}.
\begin{lem}[Regularity of large sifted sets]
  \label{ManypsL20}
  Let $Z$ be the number of points $n$ that belong to every $\Kcal_q$,
  where $(\Kcal_q)_{q\le Q}$ is
  a consistent multiplicative sequence
  that satisfies the Johnsen-Gallagher
  condition. Assume that $L(Q)$ satisfies
  $L(Q)=C (\log Q)^\kappa(1+\Ocal(1/\log Q))$ when $Q\ge2$, for some
  $C>0$ and $\kappa\ge0$.
  For any parameter $A\ge1$, we have 
  \begin{equation*}
    \int_{-(\log N)^A/N}^{(\log N)^A/N}\biggl|S(\beta)-\frac{Z}{N}
    \sum_{n\le N} e(n\beta)\biggr|^2d\beta
    \ll_A
    \biggl(\frac{N}{L(\sqrt{N})}-Z
    \biggr)\frac{1}{(\log N)^{\kappa}}+
    \frac{N\log\log N}{(\log N)^{2\kappa+1}}
  \end{equation*}
  where $S(\beta)=\sum_n e(n\beta)$.
\end{lem}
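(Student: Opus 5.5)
The plan is to expand the square and compare each piece with $Z$ via the Parseval identity, using Theorem~\ref{L2S0} on a family of disjoint Farey arcs. Shift the interval so that the points lie in $[1,N]$; this changes $S(\beta)$ only by a unimodular factor, and we apply the same factor to the model $T(\beta)=\frac{Z}{N}\sum_{n\le N}e(n\beta)$. Put $\eta=(\log N)^A/N$. Expanding,
\begin{equation*}
  \int_{-\eta}^{\eta}|S-T|^2d\beta
  =\int_{-\eta}^{\eta}|S|^2d\beta
  -2\Re\int_{-\eta}^{\eta}S\overline{T}d\beta
  +\int_{-\eta}^{\eta}|T|^2d\beta .
\end{equation*}
The two last integrals are handled by near-orthogonality. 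For $1\le n,m\le N$ we have $\int_{-\eta}^{\eta}e((n-m)\beta)d\beta=\1_{n=m}\cdot 2\eta+\Ocal\bigl(\min(\eta,1/|n-m|)\bigr)$. A cleaner route is to use $\int_{-1/2}^{1/2}$ exactly and bound the tail $\eta\le|\beta|\le1/2$ by $|D_N(\beta)|\ll1/|\beta|$, where $D_N(\beta)=\sum_{n\le N}e(n\beta)$. This gives
\begin{equation*}
  \int_{-\eta}^{\eta}S\overline{T}d\beta=\frac{Z^2}{N}\bigl(1+\Ocal((\log N)^{-A})\bigr),
  \qquad
  \int_{-\eta}^{\eta}|T|^2d\beta=\frac{Z^2}{N}\bigl(1+\Ocal((\log N)^{-A})\bigr).
\end{equation*}
The first estimate uses $\int_{|\beta|\ge\eta}|S||D_N|\,d\beta\le\|S\|_2\,\|D_N\1_{|\beta|\ge\eta}\|_2\ll\sqrt{Z}\sqrt{1/\eta}$ and Cauchy--Schwarz; I will check that this is $\ll Z(\log N)^{-A/2}$, which is enough because $Z\gg N/(\log N)^{\kappa}$ may be assumed (otherwise the bound is trivial). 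Since $Z/N\le 1/L\ll(\log N)^{-\kappa}$, these error terms are absorbed for $A\ge1$, after adjusting $A$.

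The main step is the upper bound for $\int_{-\eta}^{\eta}|S|^2d\beta$. Set $Q'=\sqrt{N}/(\log N)^{A+1}$. For $q\le Q'$ and $(a,q)=1$, the arcs $|\alpha-a/q|\le\eta$ are pairwise disjoint modulo~$1$, because distinct Farey fractions are at distance $\ge1/(qq')\ge1/Q'^2>2\eta$. Apply Theorem~\ref{L2S0} to the twisted sequence $(u_ne(n\beta))$ and then Cauchy--Schwarz with $\sum_{a\mode q}|\hat{\psi}^*_q(a)|^2=g(q)$. This gives $\sum_{a\mode q}|S(a/q+\beta)|^2\ge g(q)|S(\beta)|^2$. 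Integrating over $|\beta|\le\eta$, summing over $q\le Q'$, and using disjointness together with Parseval $\int_0^1|S|^2=Z$, we get
\begin{equation*}
  L(Q')\int_{-\eta}^{\eta}|S(\beta)|^2d\beta\le Z .
\end{equation*}
Combining with the previous paragraph,
\begin{equation*}
  \int_{-\eta}^{\eta}|S-T|^2d\beta\le \frac{Z}{N}\Bigl(\frac{N}{L(Q')}-Z\Bigr)+\text{(negligible)} .
\end{equation*}

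It remains to replace $L(Q')$ by $L(\sqrt N)$. From $L(Q)=C(\log Q)^{\kappa}(1+\Ocal(1/\log Q))$ one needs, more precisely, $L(\sqrt N)-L(Q')\ll(\log N)^{\kappa-1}\log\log N$. This is the point I expect to be delicate: the hypothesis as stated only gives an $\Ocal((\log N)^{\kappa-1})$ error in each term, which is what produces the extra $\log\log N$ factor if handled via $\log\sqrt N-\log Q'\asymp\log\log N$. I would first try to get the difference from monotonicity and the asymptotic in the form $L(\sqrt N)-L(Q')\ll(\log N)^{\kappa-1}\log\log N$. This yields
\begin{equation*}
  \frac{N}{L(Q')}-\frac{N}{L(\sqrt N)}\ll\frac{N\log\log N}{(\log N)^{\kappa+1}} .
\end{equation*}
Multiplying by $Z/N\ll(\log N)^{-\kappa}$ gives the term $N\log\log N/(\log N)^{2\kappa+1}$, while the main term $\frac ZN\bigl(\frac N{L(\sqrt N)}-Z\bigr)$ is bounded by $(\log N)^{-\kappa}\bigl(\frac N{L(\sqrt N)}-Z\bigr)$. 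If that factor is negative, the whole right-hand side is dominated by the second term anyway.
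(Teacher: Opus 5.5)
Your proof is correct. The paper gives no argument of its own for this statement: it recalls it as Corollary~2.6 of \cite{Ramare*26-1}. It describes that work as coupling a local lower bound modulo~$q$ with the Parseval identity over a Farey dissection, and your proof is a self-contained instance of that mechanism. Theorem~\ref{L2S0} applied to $(u_ne(n\beta))$, together with Cauchy--Schwarz, gives $\sum_{a\mode q}|S(a/q+\beta)|^2\ge g(q)|S(\beta)|^2$. The arcs of radius $\eta=(\log N)^A/N$ around the fractions of denominator $q\le Q'=\sqrt N/(\log N)^{A+1}$ are disjoint, so Parseval yields $L(Q')\int_{-\eta}^{\eta}|S|^2\le Z$, and expanding $|S-T|^2$ concludes. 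What your write-up adds to the bare citation is that it shows where the term $N\log\log N/(\log N)^{2\kappa+1}$ comes from: it is the cost of lowering $\sqrt N$ to $Q'$ so that arcs of width $(\log N)^A/N$ stay disjoint.

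A few points need tidying, none of them a gap.

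The cross term is not $\frac{Z^2}{N}\bigl(1+\Ocal((\log N)^{-A})\bigr)$. Your Cauchy--Schwarz estimate gives an absolute error $\ll\frac{Z}{N}\sqrt{Z/\eta}=Z^{3/2}N^{-1/2}(\log N)^{-A/2}$. Using only the upper bound $Z\ll N/(\log N)^{\kappa}$ (e.g.\ Theorem~\ref{WeightedSieve} at level $Q'$), this is $\ll N(\log N)^{-(3\kappa+A)/2}\le N(\log N)^{-2\kappa-1}$ once $A\ge\kappa+2$. Since the left-hand side is nondecreasing in $A$, proving the lemma for $\max(A,\kappa+2)$ is legitimate, and the case distinction on the size of $Z$ is unnecessary.

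The step you call delicate is immediate. Since $\log Q'=\frac12\log N-(A+1)\log\log N$, the hypothesis gives $L(\sqrt N)-L(Q')=C\bigl((\tfrac12\log N)^{\kappa}-(\log Q')^{\kappa}\bigr)+\Ocal((\log N)^{\kappa-1})\ll_A(\log N)^{\kappa-1}\log\log N$. The $\log\log N$ comes from the main term, not from the remainders, and no monotonicity is needed.

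Lastly, you tacitly need the sifting range to contain all $q\le Q'$, which the appearance of $L(\sqrt N)$ presupposes. Your handling of the case $Z>N/L(\sqrt N)$ is right under the natural reading of the statement, namely with the positive part of its first term, since a nonnegative left-hand side cannot be $\ll$ a negative quantity.
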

Here is the stronger version of Lemma~\ref{scalSJQ} that we will use.
\begin{lem}
  \label{Out2}
  Let $A\ge \frac52+\kappa$ be a constant. Let us assume that $Q\le
  \sqrt{N}$ and that
  \begin{equation}
    \tag{Hyp}
    S(0)=Z\ge \frac{N}{L(\sqrt{N}(\log N)^A)}.
  \end{equation}
  When $0\le \theta_q\le 1$, $Q\le \pi\sqrt{N}/2$ and
  $\xi_q=\min(1,N/[q(q+Q) (\log N)^A])$, we have 
  \begin{equation*}
    \int_{\delta_1}^{1+\delta_1} S(\alpha)\overline{J_Q(\alpha)}d\alpha
    \ge
    \frac{Z^2}{N}
    \sum_{q\le Q}
    \theta_qg(q)\biggl(1-
    \1_{q\ge Q_0}
    \frac{2q(q+Q)}{\pi^2N}\biggr)
    +
    \Ocal_{A}\biggl(
    \frac{\sqrt{\log\log N}}{(\log N)^{3/2}}Z
    \biggr)
  \end{equation*}
  where $Q_0=N/(2Q(\log N)^A)$.
\end{lem}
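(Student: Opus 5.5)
We start from Lemma~\ref{scalSJQb} with the choice $F(\beta)=\frac{Z}{N}\sum_{n\le N}e(n\beta)$, shifted appropriately so that it is the Fourier model of the interval $[M,M+N]$. We may take $M=0$ after multiplying $u_n$ by a phase. The first term of Lemma~\ref{scalSJQb} already gives $\frac{Z^2}{N}\sum_q\theta_qg(q)$. We then bound the two correction terms separately, for each $q\le Q$.

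\emph{Main-term correction.} Here $F$ is the Fourier transform of $\frac{Z}{N}\1_{[0,N]}$, up to discretisation. The integral $\int_{\delta_q}^{\xi_q^{-1}\delta_q}F(\beta)\overline{\hat{C}_{[0,N],\xi_q^{-1}\delta_q}(\beta)}d\beta$ can therefore be handled by Parseval. When $q<Q_0$, we have $\xi_q=1$ and the range is empty. When $q\ge Q_0$, the function $C$ majorises $\1_{[0,N]}$ and the full integral over $\mathbb{R}$ equals essentially $\frac{Z}{N}\int C\,\1_{[0,N]}=Z$. The part $|\beta|\le\delta_q$ therefore accounts for everything except a tail. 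Since $\hat{\1}_{[0,N]}(\beta)$ behaves like $\sin(\pi N\beta)/(\pi\beta)$, its square integrated over $|\beta|\ge\delta_q$ contributes at most about $\int_{|\beta|\ge\delta_q}\frac{d\beta}{\pi^2\beta^2}\cdot(\text{mean of }\sin^2)=\frac{1}{\pi^2\delta_q}$ after averaging. Pairing this with the majorant gives the loss $\frac{Z^2}{N}\cdot\frac{2q(q+Q)}{\pi^2N}$, where the factor $2$ comes from both signs of $\beta$. The condition $Q\le\pi\sqrt N/2$ keeps $1-2q(q+Q)/(\pi^2N)$ nonnegative. I would make this precise by writing $\hat{C}=\hat{\1}+(\hat C-\hat{\1})$ and using that the $L^2$ mass of $C-\1_{[0,N]}$ is $O(\xi_q\delta_q^{-1})=O(N/(\log N)^A)$, which is negligible.

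\emph{Error term.} I must bound
\[
\frac{Z}{N}\sum_q\theta_qg(q)\int_{\delta_q}^{\xi_q^{-1}\delta_q}|S(\beta)-F(\beta)|\,\xi_q\delta_q^{-1}c(\xi_q\beta/\delta_q)\,d\beta .
\]
The upper limit satisfies $\xi_q^{-1}\delta_q\le(\log N)^A/N$, so Cauchy--Schwarz together with Lemma~\ref{ManypsL20} applies. The $L^2$ weight is $\int(\xi_q\delta_q^{-1}c)^2d\beta\ll\xi_q\delta_q^{-1}\le N/(\log N)^A$. By (Hyp) and $L(Q)\asymp(\log Q)^\kappa$, the quantity $N/L(\sqrt N)-Z$ is at most $N/L(\sqrt N)-N/L(\sqrt N(\log N)^A)\ll N\log\log N/(\log N)^{\kappa+1}$. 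Lemma~\ref{ManypsL20} then bounds the $L^2$ norm of $S-F$ by $(N\log\log N)^{1/2}(\log N)^{-\kappa-1/2}$. Each $q$ thus contributes
\[
\ll\frac{Z}{N}\,g(q)\,\frac{\sqrt{N\log\log N}}{(\log N)^{\kappa+1/2}}\cdot\frac{\sqrt{q(q+Q)}}{\dots}.
\]
Here I use the sharper weight $\int(\delta_q^{-1}\xi_q c)^2\le\delta_q^{-1}\xi_q\ll q(q+Q)$.

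Summing $g(q)\sqrt{q(q+Q)}\ll\sum g(q)\,q\sqrt{Q}/\sqrt q$ over $q\le Q$ uses Lemma~\ref{orderp} and the hypothesis $\sum dg(d)/D\ll1$. This yields $\ll Q(\log Q)^{\kappa-1}\ll\sqrt N(\log N)^{\kappa-1}$. The total error is then $\ll Z\sqrt{\log\log N}/(\log N)^{3/2}$, as claimed.

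\emph{Main obstacle.} I expect the delicate step to be the exact constant $2/\pi^2$ in the main-term correction. It needs an honest evaluation of $\int_{|\beta|>\delta_q}|\hat{\1}_{[0,N]}(\beta)|^2$ against the oscillating $\sin^2(\pi N\beta)$, with the averaging $\frac12$ justified because $N\delta_q\gg1$. The $\cos$-terms give only lower-order errors of size $1/(N\delta_q)$ relative to the main term.
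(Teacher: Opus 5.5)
Your overall route is the paper's. You start from Lemma~\ref{scalSJQb} with $F(\beta)=\frac{Z}{N}\sum_{n\le N}e(n\beta)$ and note that the $\beta$-range is empty for $q<Q_0$. You then apply Cauchy--Schwarz with Lemma~\ref{ManypsL20} and (Hyp) to the $|S-F|$ term, and sum over $q$ with Lemma~\ref{orderp} applied to $f(d)=dg(d)$. Your error-term arithmetic does give $Z\sqrt{\log\log N}/(\log N)^{3/2}$. One intermediate claim there is false, however. Since $c(t)\sim 1/(\pi t)$ as $t\to0$, the substitution $t=\xi_q\beta/\delta_q$ gives $\int_{\delta_q}^{\xi_q^{-1}\delta_q}\bigl(\xi_q\delta_q^{-1}c(\xi_q\beta/\delta_q)\bigr)^2d\beta=\xi_q\delta_q^{-1}\int_{\xi_q}^1c(t)^2dt$. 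This is $\sim 1/(\pi^2\delta_q)$ when $\xi_q\to0$, not $\ll\xi_q\delta_q^{-1}\le N/(\log N)^A$. The bound you actually feed into the sum, $\ll q(q+Q)=\delta_q^{-1}$, is the correct one, so nothing downstream breaks.

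The real problem is the constant $2/\pi^2$, which you reach through two errors that happen to cancel.
\begin{enumerate}
\item Averaging $\sin^2$ to $\frac12$ is not justified. For $q\asymp Q\asymp\sqrt N$, which is where the correction carries its weight, $N\delta_q=N/(q(q+Q))$ is merely bounded below (by $2/\pi^2$). So the cosine terms of relative size $1/(N\delta_q)$ are not of lower order. In fact $\int_{\delta_q}^\infty\frac{\sin^2\pi N\beta}{\pi^2\beta^2}d\beta=M(N\delta_q)/(\pi^2\delta_q)$ with $M$ as in \eqref{defM}, and $M$ exceeds $\frac12$ for arguments of order $1$; its maximum is $0.674$.
\item Your integral over $|\beta|\ge\delta_q$ already contains both signs of $\beta$. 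That symmetry is exactly the factor $2$ in Lemma~\ref{scalSJQb}, so multiplying by $2$ again double counts.
\end{enumerate}

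The step you call the main obstacle is not needed. With $\sin^2\le1$, each side contributes at most $\frac{Z}{N}\cdot\frac{1}{\pi^2\delta_q}$, and the factor $2$ of Lemma~\ref{scalSJQb} then gives exactly $\frac{Z^2}{N}\theta_qg(q)\frac{2q(q+Q)}{\pi^2N}$. This is the paper's argument, done pointwise: it uses $|F(\beta)|\le Z/(\pi N\beta)$, \eqref{mb}, and $c(t)\le 1-t+1/(\pi t)$, where the part $1-t$ costs a relative $O(\xi_q\log(1/\xi_q))$.

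Your split $\hat{C}=\hat{\1}+(\hat{C}-\hat{\1})$ is a legitimate alternative for the secondary part. Theorem~\ref{myNorm} together with $C\ge\1_{[0,N]}$ gives $\|C-\1_{[0,N]}\|_2^2\le\myc\xi_q\delta_q^{-1}$. Cauchy--Schwarz against $\|F\|_{L^2(\delta_q,\infty)}\le Z/(\pi N\sqrt{\delta_q})$ then gives a relative error $O(\sqrt{\xi_q})$, which sums to $\ll\frac{Z^2}{N}(\log N)^{\kappa-1-A/2}$ and is negligible. This uses the global $L^2$ bound where the paper uses the pointwise envelope.

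The exact evaluation through $M$ that you were reaching for is what the paper does later, in proving Theorem~\ref{bettere}, to push the constant below $2/\pi^2$. It is not needed for this lemma.
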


\begin{proof}
  We start with Lemma~\ref{scalSJQb} in which we plug the estimate of
  Lemma~\ref{ManypsL20}. Let us first notice that
  \begin{align*}
    \int_{\delta_q}^{\xi_q^{-1}\delta_q}
    \biggl(\frac{c(\xi_q\beta/\delta_q)}{\xi_q^{-1}\delta_q}\biggr)^2
    d\beta
    &=
      \int_{\xi_q}^{1}
    \frac{\xi_qc(t)^2}{\delta_q}
      dt
      \le
      \frac{\xi_q}{\delta_q}\int_{\xi_q}^{1}
    (1-t+1/(\pi t))^2
      dt
    \\&\le
    \frac{1}{\pi^2\delta_q}(1+\Ocal(\xi_q\log\xi_q))\ll1/\delta_q.
    \end{align*}
    Our second preparatory estimate is the next one  with $Z=S(0)$.
  \begin{align*}
    \int_{\delta_q}^{\xi_q^{-1}\delta_q}
    \biggl|  \frac{Z}{N}
    \int_{0}^{N} e(\beta t)dt
    \biggr|
    \frac{c(\xi_q\beta/\delta_q)}{\xi_q^{-1}\delta_q}
    d\beta
    &=
    \frac{Z}{N}
    \int_{\delta_q}^{\xi_q^{-1}\delta_q}\frac{|\sin\pi\beta
      N|}{\pi\beta}
      \frac{c(\xi_q\beta/\delta_q)}{\xi_q^{-1}\delta_q}d\beta
    \\&=
    \frac{Z}{N}
    \int_{\xi_q}^{1}\frac{|\sin\pi t
      \delta_q N/\xi_q|}{\pi t}
      \frac{c(t)}{\xi_q^{-1}\delta_q}dt
     \\&\le
     \frac{Z}{N}
       \int_{\xi_q}^{1}\frac{1}{\pi t}
    \frac{1-t+\frac{1}{\pi t}}{\xi_q^{-1}\delta_q}dt
    \le \frac{Z(1+\Ocal(\xi_q\log\xi_q))}{\pi^2\delta_q N}.
  \end{align*}
  We may now use the inequality established in Lemma~\ref{scalSJQb}
  together with~\eqref{mb}. We obtain:
  \begin{multline}
    \label{innerstep}
    \int_{\delta_1}^{1+\delta_1} S(\alpha)\overline{J_Q(\alpha)}d\alpha
    \ge
    \frac{Z^2}{N}
    \sum_{q\le Q}
    \theta_qg(q)
    \\-
    \frac{2Z}{N}\sum_{q\le Q}
    \theta_qg(q)
    \int_{\delta_q}^{\xi_q^{-1}\delta_q}
    \biggl|S(\beta)-\frac{Z}{N}\sum_{n\le N}e(n\beta)\biggr|
    \frac{c(\xi_q\beta/\delta_q)}{\xi_q^{-1}\delta_q}d\beta
    \\-
    \frac{2Z}{N}\sum_{q\le Q}
    \theta_qg(q)
    \int_{\delta_q}^{\xi_q^{-1}\delta_q}
    \biggl|\frac{Z}{N}\sum_{n\le N}e(n\beta)\biggr|
    \frac{c(\xi_q\beta/\delta_q)}{\xi_q^{-1}\delta_q}d\beta.
  \end{multline}
  By using the Cauchy inequality and on plugging there the estimate of
  Lemma~\ref{ManypsL20}, we find that 
  \begin{equation*}
    \Biggl|\int_{\delta_q}^{\xi_q^{-1}\delta_q}
    \biggl|S(\beta)-\frac{Z}{N}\sum_{n\le N}e(n\beta)\biggr|
    \frac{c(\xi_q\beta/\delta_q)}{\xi_q^{-1}\delta_q}d\beta\Biggr|^2
    \ll
    \biggl(\frac{N}{L(\sqrt{N})}-Z
    \biggr)\frac{1}{\delta_q(\log N)^{\kappa}}+
    \frac{N\log\log N}{\delta_q(\log N)^{2\kappa+1}}
  \end{equation*}
  since $\xi_q^{-1}\delta_q= (\log N)^A/N$ when $q\ge Q_1$ and
  $\xi_q=1$ otherwise (for some $Q_1$ that can be explicitely determined).
  Let us assume recall Hypothesis~$(Hyp)$ and set
  $T=A(\log N)^{\kappa-1}\log\log N$.
  We get
  \begin{equation*}
    \frac{N}{L(\sqrt{N})}-Z\le
    \frac{NT}{L(\sqrt{N})^2}\ll_A \frac{NT}{(\log N)^{2\kappa}}
    \ll_A \frac{ZT}{(\log N)^\kappa}.
  \end{equation*}
  Consequently, we find that, when $q\ge Q_1$,
  \begin{align*}
    \Biggl|\int_{\delta_q}^{\xi_q^{-1}\delta_q}
    \biggl|S(\beta)-\frac{Z}{N}\sum_{n\le N}e(n\beta)\biggr|
    \frac{c(\xi_q\beta/\delta_q)}{\xi_q^{-1}\delta_q}d\beta\Biggr|^2
    &\ll
    \frac{Z}{\delta_q(\log N)^{\kappa}}
    \biggl(\frac{T}{(\log N)^{\kappa}}+
    \frac{\log\log N}{\log N}\biggr)
    \\&\ll
    \frac{Z\log\log N}{\delta_q(\log N)^{1+\kappa}}.
  \end{align*}
  We have thus reached (on using $|\theta_q| \ll 1$)
  \begin{multline*}
    \int_{\delta_1}^{1+\delta_1} S(\alpha)\overline{J_Q(\alpha)}d\alpha
    \ge
    \frac{Z^2}{N}
    \sum_{q\le Q}
    \theta_qg(q)
    -
    \Ocal(1)\frac{2Z}{N}\sum_{Q_1\le q\le Q}g(q)
    \sqrt{\frac{Z\log\log N}{\delta_q(\log N)^{1+\kappa}}}
    \\-
    \frac{2Z^2}{\pi^2N^2}\sum_{Q_1\le q\le Q}
    \frac{\theta(q)g(q)}{\delta_q}
    -\Ocal(1)\frac{Z^2}{\pi^2N^2}\sum_{Q_1\le q\le Q}
    \frac{g(q)\xi_q(\log\xi_q)}{\delta_q}.
  \end{multline*}
  This simplifies in
  \begin{multline*}
    \int_{\delta_1}^{1+\delta_1} S(\alpha)\overline{J_Q(\alpha)}d\alpha
    \ge
    \frac{Z^2}{N}
    \sum_{q\le Q}
    \theta_qg(q)
    -
    \Ocal\biggl(\frac{Z}{N}\sum_{q\le Q}g(q)\sqrt{q}
    \sqrt{\frac{QZ\log\log N}{(\log N)^{1+\kappa}}}\biggr)
    \\-
    \frac{2Z^2}{\pi^2N^2}\sum_{Q_1\le q\le Q}
    \theta(q)g(q)q(q+Q)
    -\Ocal\biggl(\frac{Z^2}{N(\log N)^{A-1-\kappa}}\biggr).
  \end{multline*}
  Lemma~\ref{orderp} applied to $f(d)=dg(d)$ gives us the next bound:
  \begin{equation*}
    \sum_{q\le Q}g(q)\sqrt{q}
    \ll \sqrt{Q}(\log Q)^{\kappa-1}
  \end{equation*}
  and therefore
   \begin{multline*}
    \int_{\delta_1}^{1+\delta_1} S(\alpha)\overline{J_Q(\alpha)}d\alpha
    \ge
    \frac{Z^2}{N}
    \sum_{q\le Q}
    \theta_qg(q)
    -
    \frac{2Z^2}{\pi^2N^2}\sum_{q\le Q}
    \theta(q)g(q)q(q+Q)
    \\-
    \Ocal\biggl(\frac{Z}{N}\sqrt{Q}(\log N)^{\kappa-1}
    \sqrt{\frac{QZ\log\log N}{(\log N)^{1+\kappa}}}\biggr)
    -\Ocal\biggl(\frac{Z^2}{N(\log N)^{A-1-\kappa}}\biggr).
  \end{multline*}
  The error term reads
  \begin{equation*}
    \Ocal\biggl(
    \biggl(\frac{Q}{\sqrt{Z}(\log N)^{\frac{\kappa+3}{2}}}
    \sqrt{\log\log N}
    +\frac{1}{(\log N)^{A-1-2\kappa}}\biggr)\frac{Z^2}{N}(\log N)^\kappa
    \biggr)
  \end{equation*}
  or also
  \begin{equation*}
    \Ocal\biggl(
    \biggl(\frac{Q\sqrt{\log\log N}}{\sqrt{N}(\log N)^{\frac{3}{2}}}
    +\frac{1}{(\log N)^{A-1-2\kappa}}\biggr)Z
    \biggr).
  \end{equation*}
  The condition $q\ge Q_1$ is to ensure that $q(q+Q)(\log N)^A\ge N$,
  and can be degraded in $2q Q(\log N)^A\ge N$. 
  The lemma follows readily.
\end{proof}

\begin{proof}[Proof of Theorem~\ref{better}]
  By using Cauchy's inequality on
  $\int_{\delta_1}^{1+\delta_1}
  S(\alpha)\overline{J_Q(\alpha)}d\alpha$, and on using
  Lemmas~\ref{normJQb} and~\ref{Out2}, we find that
  \begin{multline*}
    \biggl|
    \frac{Z^2}{N}
    \sum_{q\le Q}
    \theta_qg(q)
    \biggl(1-
    \1_{q\ge Q_0}
    \frac{2 q(q+Q)}{\pi^2 N}\biggr)
    -\Ocal\biggl(
    \frac{\sqrt{\log\log N}}{(\log N)^{3/2}}Z\biggr)
    \biggr|^2
    \\\le
    Z\biggl(\frac{Z^2}{N}\sum_{q\le Q}
    \theta_q^2g(q)
    +\Ocal\biggl(\frac{Z^2Q^2}{N(\log N)^3}\biggr)\biggr).
  \end{multline*}
  Since $Q^2\ll N$, this simplifies in
  \begin{multline*}
    Z
    \biggl|
    \sum_{q\le Q}
    \theta_qg(q)
    \biggl(1-
    \1_{q\ge Q_0}
    \frac{2 q(q+Q)}{\pi^2 N}\biggr)
    -\Ocal\biggl(
    \frac{\sqrt{\log\log N}}{(\log N)^{3/2}}(\log N)^{\kappa}\biggr)
    \biggr|^2
    \\\le
    N\bigl(1+\Ocal((\log N)^{-2})\bigr)\sum_{q\le Q}
    \theta_q^2g(q).
  \end{multline*}
  This further simplifies into
  \begin{multline*}
    Z\biggl(1+\Ocal\biggl(\frac{\sqrt{\log\log N}}{(\log N)^{3/2}}\biggr)\biggr)
    \biggl|
    \sum_{q\le Q}
    \theta_qg(q)
    \biggl(1-
    \1_{q\ge Q_0}
    \frac{2 q(q+Q)}{\pi^2 N}\biggr)
    \biggr|^2
    \\\le
    N\bigl(1+\Ocal((\log N)^{-2})\bigr)\sum_{q\le Q}
    \theta_q^2g(q)
  \end{multline*}
  i.e.
  \begin{equation*}
    Z
    \biggl|
    \sum_{q\le Q}
    \theta_qg(q)
    \biggl(1-
    \1_{q\ge Q_0}
    \frac{2 q(q+Q)}{\pi^2  N}\biggr)
    \biggr|^2
    \le
    N\biggl(1+\Ocal\biggl(\frac{\sqrt{\log\log N}}{(\log
      N)^{3/2}}\biggr)\biggr)
    \sum_{q\le Q}
    \theta_q^2g(q).
  \end{equation*}
  We select
  \begin{equation}
    \theta_q=1-
    \frac{2 q(q+Q)}{\pi^2 N}
  \end{equation}
  and $Q\le \pi\sqrt{N}/2$, so that this quantity remains non-negative.
  We have reached
  \begin{equation*}
    Z
    \sum_{q\le Q}
    g(q)
    \biggl(1-
    \1_{q\ge Q_0}
    \frac{2 q(q+Q)}{\pi^2 N}\biggr)^2
    \le
    N\biggl(1+\Ocal\biggl(\frac{\sqrt{\log\log N}}{(\log
      N)^{3/2}}\biggr)\biggr).
  \end{equation*}
  Forgetting the condition $q\ge Q_0$ ends the proof of our theorem.
\end{proof}
Let us now turn towards the proof of a more explicit version of
Theorem~\ref{better}, i.e. Theorem~\ref{bettere}. We first prove a
simplified version before in the next two sections in a numerically
more refined proof.
\begin{proof}[Proof of a simplified version of Theorem~\ref{bettere}]
We now find that, with $Y=2/(\pi^2N)$,
  \begin{equation*}
    \biggl(1-
    \frac{2 q(q+Q)}{\pi^2 N}\biggr)^2
    =
    Y^2q^4 + 2Y^2Qq^3 + (Y^2Q^2 - 2Y)q^2 - 2YQq + 1.
  \end{equation*}
  Let us set
  \begin{equation}
    \label{eq:14}
    H=\sum_{q\le Q}
    g(q)
    \biggl(1-
    \frac{2 q(q+Q)}{\pi^2 N}\biggr)^2.
  \end{equation}
  On using Lemma~\ref{density+}, we find that
  \begin{align*}
    H
    &=
    \sum_{q\le Q}
    g(q)
    \bigl(Y^2q^4 + 2Y^2Qq^3 + (Y^2Q^2 - 2Y)q^2 - 2YQq + 1\bigr)
    \\&=
    \biggl(Y^2\frac{\kappa C}{4}Q^4 + 2Y^2Q\frac{\kappa C}{3}Q^3 +
    Y(YQ^2 - 2)\frac{\kappa C}{2}Q^2 - 2YQ\kappa C
    \\&\qquad+
    C \log Q+C\cdot c_0^*+o(1)\biggr)(\log Q)^{\kappa-1}.
  \end{align*}
  We set $x=Q/\sqrt{N}$ and get
  \begin{multline*}
    \frac{H}{C(\log Q)^{\kappa-1}}-\frac{\log N}2
    \\=
    \frac{4\kappa}{\pi^4}\biggl(\frac{1}{4} +
    \frac{2}{3}
    +\frac{1}{2}
    \biggr)
    x^4 -
    \frac{4\kappa}{\pi^2}\biggl(\frac{1}{2}+1\biggr)x^2
    +
    \log x+c_0^*+o(1),
  \end{multline*}
  i.e.
  \begin{equation}
    \label{getH}
    \frac{H}{C(\log Q)^{\kappa-1}}-\frac{\log N}2
    =
    r(x)=\frac{17\kappa}{3\pi^4} x^4 -
    \frac{6\kappa}{\pi^2}x^2
    +
    \log x+c_0+o(1).
  \end{equation}
  Remember also that $\theta_q$ had to be non-negative, so that we
  have
  \begin{equation*}
    1-\frac{4Q^2}{\pi^2N}=1-4(x/\pi)^2\ge0.
  \end{equation*}
  Notice that
  \begin{equation*}
    (\log Q)^{\kappa-1}=(\tfrac12\log N)^{\kappa-1}
    +(\kappa-1)(\log x)(\tfrac12\log N)^{\kappa-2}(1+o(1))
  \end{equation*}
  so that
  \begin{align*}
    H/C
    &=(\tfrac12\log N)^{\kappa}
    +(r(x)+(\kappa-1)\log x)(\tfrac12\log N)^{\kappa-1}(1+o(1)).
    \\&=(\tfrac12\log N)^{\kappa}
    +\biggl(\frac{17\kappa}{3\pi^4} x^4 -
    \frac{6\kappa}{\pi^2}x^2
    +
    \kappa \log x+c_0\biggr)(\tfrac12\log N)^{\kappa-1}(1+o(1)).
  \end{align*}
  The derivative in $x$ of the constant term  reads
  \begin{equation*}
    \frac{68}{3\pi^4} x^3 -
    \frac{12}{\pi^2}x
    +
    \frac1x
  \end{equation*}
  so that it vanishes when
  \begin{equation*}
    \frac{68}{3} (x/\pi)^4 -
    12 (x/\pi)^2
    +
    1=0.
  \end{equation*}
  Let us set
  \begin{equation*}
    (x_{\pm}/\pi)^2=\frac{6\pm\sqrt{36-68/3}}{68/3}.
  \end{equation*}
  The above function of~$x$ increases until $x_-$, then decreases
  until $x_+$ and increases after this value.
  We find that
  \begin{equation*}
    (x_-/\pi)^2=0.103\,611\cdots
    \le 1/4\le
    0.425\,800\cdots=(x_+/\pi)^2.
  \end{equation*}
  So
  choosing $x=x_-$ is optimal. We obtain
  \begin{equation*}
    \frac{17\kappa}{3\pi^4} x_-^4 -
    \frac{6\kappa}{\pi^2}x_-^2
    +
    \kappa \log x_-+c_0
    \ge -0.753\kappa+c_0.
  \end{equation*}
\end{proof}

\section{A special function}

Let us define
\begin{equation}
  \label{defM}
    M(t)
    =
  t\pi^2\int_{t}^\infty
  \biggl(\frac{\sin \pi  \beta}{\pi \beta}\biggr)^2
  d\beta.
\end{equation}
By integration by parts, we readily discover that
\begin{align*}
  M(t)
  &=
    t\int_t^\infty\frac{1-\cos2\pi \beta}{2\beta^2}d\beta
  =
  \frac{1}{2}+\frac{\sin 2\pi t}{4\pi t}
  -\frac{t}{2\pi}\int_t^{\infty}\frac{\sin 2\pi \beta}{\beta^3}d\beta
  \\&=
  \frac{1}{2}+\frac{\sin 2\pi t}{4\pi t}
  -\frac{\cos 2\pi t}{4\pi^2 t^2}
  +\frac{3t}{4\pi^2}\int_t^{\infty}\frac{\cos 2\pi \beta}{\beta^4}d\beta
  \\&=
  \frac{1}{2}+\frac{\sin 2\pi t}{4\pi t}
  -\frac{\cos 2\pi t}{4\pi^2 t^2}
  -\frac{3\sin 2\pi t}{8\pi^3t^3}
  +\frac{3t}{2\pi^3}\int_t^{\infty}\frac{\sin 2\pi \beta}{\beta^5}d\beta.
\end{align*}
We ran the next GP-Pari script:
\begin{verbatim}
{Moft0(t) =
   return(1/2 + sinc(2*Pi*t)/2 - cos(2*Pi*t)/4/Pi^2/t^2
          - 3*sin(2*Pi*t)/8/Pi^3/t^3);}

{Moft(t) =
   if(t == 0,
      return(0),
      return(Moft0(t)
             + intnum(y = t, [+oo, -2*I*Pi], sin(2*Pi*y)/y^5)*t*3/2/Pi^3));}
	   
s = plothexport("svg", t = 0.1, 4, Moft(t));
write("Moft.svg", s)
\\ and: inkscape --export-type=pdf Moft.svg
\end{verbatim}
to discover that the function $M(t)$ is non-decreasing for
$t\in[0,r_0]$ where $r_0=0.306\,603\,7\cdots$ giving
\begin{equation}
  \label{eq:16}
  M(t)\le 0.6741006.
\end{equation}
The plots are presented in
Figures~\ref{fig20} and~\ref{fig21}.
\begin{figure}
  \centering
  \includegraphics[scale=1]{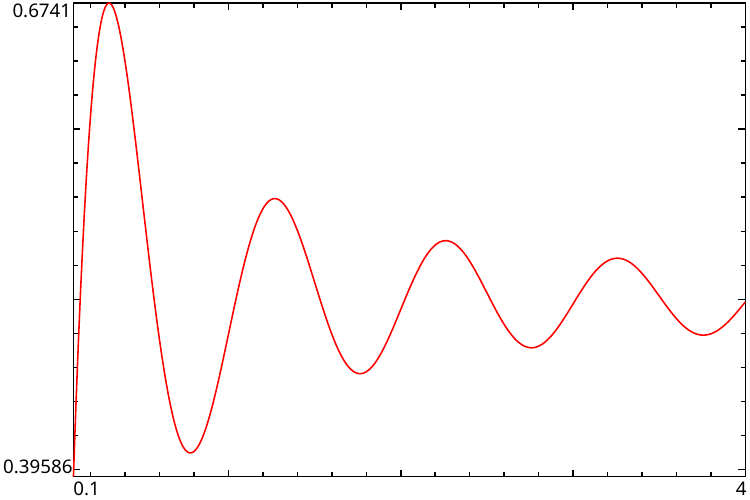}
  \caption{$M(t)$ for $t\in[0.1,4]$}
  \label{fig20}
\end{figure}

\begin{figure}
  \centering
  \includegraphics[scale=1]{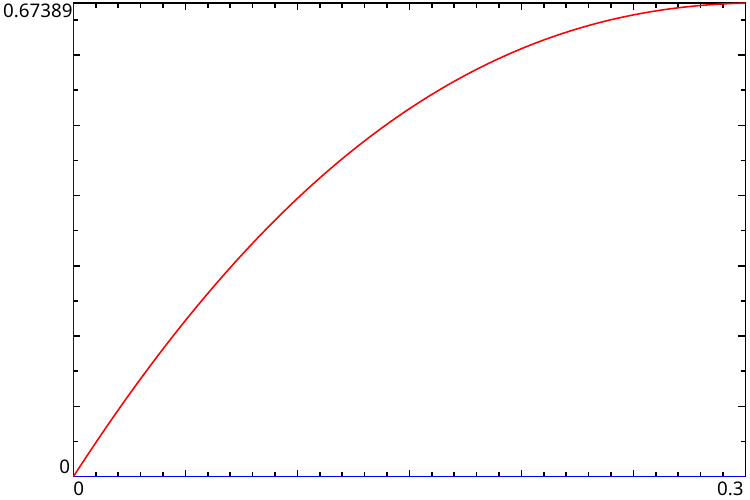}
  \caption{$M(t)$ for $t\in[0,0.3]$}
  \label{fig21}
\end{figure}

We further find that
\begin{equation}
  \label{eq:17}
  M'(t)=\pi^2\int_t^\infty\biggl(\frac{\sin 2\pi
    \beta}{\pi\beta}\biggr)^2d\gamma
  -t\pi^2\biggl(\frac{\sin 2\pi
    t}{\pi t}\biggr)^2
\end{equation}
which shows in passing that $M'(t)$ is bounded over $[0,\infty)$.
To prepare the ground for next section, let us momentarily define
\begin{equation*}
  F_1(t)=(1+t)M\biggl(\frac{1/x^2}{t(1+t)}\biggr).
\end{equation*}
We readily compute that
\begin{equation*}
  F'_1(t)=-(1/x^2)
  \frac{2t+1}{t^2(1+t)^2}
  M'\biggl(\frac{1/x^2}{t(1+t)}\biggr)
  +
  M\biggl(\frac{1/x^2}{t(1+t)}\biggr)
\end{equation*}
and therefore, when $\eta>0$, we obtain:
\begin{equation}
  \label{boundFprime1}
  \int_\eta^1 t(1+|\log t|)|F'_1(t)|dt
  \ll_x
  \int_\eta^1 t(1+|\log t|)\frac{dt}{t^2}\ll (\log \eta)^2.
\end{equation}

\section{Proof  of Theorem~\ref{bettere}}
We provided in Section~\ref{sketch} a proof of a simplified version of
Theorem~\ref{bettere}, proof that countains the most important
ingredients. We aim here at being more precise.
In Lemma~\ref{scalSJQb}, we could be slightly more precise. This
corresponds to replacing the last summand in~\eqref{innerstep} by
\begin{equation}
  \label{eq:5}
  -
    \frac{2Z}{N}\sum_{Q_0\le q\le Q}
    \theta_qg(q)
    R(q)
\end{equation}
where
\begin{equation}
  \label{defRq}
  R(q)=\int_{\delta_q}^{\infty}\int_0^Ne(\beta t)dt
  \overline{\hat{C}_{[0,N],\delta_{\infty}}(\beta)}d\beta,
\end{equation}
with the definition
\begin{equation}
  \label{eq:4}
  \delta_{\infty}=(\log N)^A/N
\end{equation}
and when $\delta_q\delta_\infty^{-1}\le 1$.
The bound for $R(q)$ we have used up to now is
\begin{equation*}
  R(q)\le \frac{1+o(1)}{\pi^2\delta_q}.
\end{equation*}
We find that
\begin{align*}
  R(q)
  &=\int_{\delta_q}^{\infty}\int_0^Ne(\beta t)dt
  \overline{\hat{C}_{[-N/2,N/2],\delta_{\infty}}(\beta)}e(-\frac N2\beta)
    d\beta
  \\&=
  \int_{\delta_q}^{\infty}\int_{-N/2}^{N/2}e(\beta t)dt
  \overline{\hat{C}_{[-N/2,N/2],\delta_{\infty}}(\beta)}
    d\beta
  \\&=
  \int_{\delta_q}^{\delta_\infty}\frac{\sin \pi N\beta}{\pi \beta}
  \delta_\infty^{-1}\biggl(
  (1-|\delta_\infty^{-1}\beta|)\cos \pi N \beta
    +\frac{\hat{J}(\delta_\infty^{-1}\beta)}{\pi \delta_\infty^{-1}\beta}\sin \pi N \beta
  \biggr)
    d\beta
  \\&=
  \delta_\infty^{-1}
  \int_{\delta_\infty^{-1}\delta_q}^{1}\frac{\sin \pi U\beta}{\pi \beta}
  \biggl(
  (1-\beta)\cos \pi U \beta
    +\frac{\hat{J}(\beta)}{\pi \beta}\sin \pi U \beta
  \biggr)
    d\beta
\end{align*}
where $U=N\delta_\infty=(\log N)^A$.
This gives us
\begin{equation*}
  R(q)
  =
  \delta_\infty^{-1}
  \int_{\delta_\infty^{-1}\delta_q}^{1}
  \biggl(
  (1-\beta)\frac{\sin 2\pi U\beta}{2\pi \beta}
    +\hat{J}(\beta)\biggl(\frac{\sin \pi U \beta}{\pi \beta}\biggr)^2
  \biggr)
    d\beta.
  \end{equation*}
  We readily find that
  \begin{equation*}
    \int_{x}^\infty \frac{\sin 2\pi U\beta}{2\pi \beta}d\beta
    =\int_{2\pi Ux}^\infty \frac{\sin \beta}{2\pi \beta}d\beta
    =\frac{\cos 2\pi Ux}{(2\pi)^2Ux}
    -\int_{2\pi Ux}^\infty \frac{\cos \beta}{2\pi \beta^2}d\beta
    \ll 1/(Ux)
  \end{equation*}
  so that
  \begin{align*}
    R(q)
    &=\delta_\infty^{-1}
  \int_{\delta_\infty^{-1}\delta_q}^{1}
  \hat{J}(\beta)\biggl(\frac{\sin \pi U \beta}{\pi \beta}\biggr)^2
  d\beta
  +
  \Ocal\biggl(\frac{1}{U\delta_\infty}+\frac{1}{U\delta_q}\biggr)
    \\&=U\delta_\infty^{-1}
  \int_{U\delta_\infty^{-1}\delta_q}^{U}
  \hat{J}(\beta/U)\biggl(\frac{\sin \pi  \beta}{\pi \beta}\biggr)^2
  d\beta
  +
  \Ocal\biggl(\frac{1}{U\delta_q}\biggr).
  \end{align*}
  At the place where it matters most, we have $\delta_q^{-1}\simeq
  2Q^2\order N$ so that $U\delta^{-1}_\infty \delta_q$ is about
  constant. Having this in mind and on recalling~\eqref{defM}, we
  reduce the above expression of  $R(q)$ to 
  \begin{equation}
    \label{eq:6}
    R(q)\le \pi^2\delta_q^{-1} M(U\delta_\infty^{-1}\delta_q)
    +\Ocal\biggl(\frac{1}{U\delta_q}\biggr),
\end{equation}

We have
\begin{equation*}
  t=t(q)=U\delta_\infty^{-1}\delta_q=\frac{N}{q(q+Q)}=
  \frac{1/x^2}{(q/Q)(1+(q/Q))},
  \quad
  x=Q/\sqrt{N}.
\end{equation*}
We find
that $t(q)\ge 1/(2x^2)$.
On simply taking the maximum, we may use
\begin{equation*}
  R(q)\le \frac{0.6742}{\pi^2\delta_q}.
\end{equation*}
This would already yield and improvement, but we may go one step
further in precision.
We have to bound below
\begin{equation*}
  H=\sum_{ q\le Q}g(q)
  \biggl(1-
  \1_{q\ge Q_0}\frac{2x^2(q/Q)(1+q/Q)}{\pi^2}
  M\biggl(\frac{1/x^2}{(q/Q)(1+q/Q)}\biggr)\biggr)^2
\end{equation*}
on choosing
\begin{equation*}
  \theta_q=1-\1_{q\ge Q_0}\frac{2q(q+Q)}{\pi^2N}
  M\biggl(\frac{1/x^2}{(q/Q)(1+q/Q)}\biggr).
\end{equation*}
We readily find that
\begin{equation*}
  H
  =
  \sum_{ q\le Q}g(q)
  -\frac{4x^2}{\pi^2Q}
  H_1
  +\frac{4x^4}{\pi^4Q}
  H_2
\end{equation*}
where
\begin{align}
  \label{eq:10}
  H_1&=\sum_{ Q_0\le q\le Q}qg(q)
  (1+q/Q)
       M\biggl(\frac{1/x^2}{(q/Q)(1+q/Q)}\biggr),\\
  H_2&=
       \sum_{Q_0\le q\le Q}qg(q)(q/Q)(1+q/Q)^2
  M\biggl(\frac{1/x^2}{(q/Q)(1+q/Q)}\biggr)^2.
\end{align}
We use Lemma~\ref{density+} with $\eta=1/(\log N)^A$.
So we have to compute
\begin{align}
  \label{eq:11}
  K_1(x)&=
       \int_\eta^1(1+t)M\biggl(\frac{1/x^2}{t(1+t)}\biggr)dt,\\
  K_2(x)&=
       \int_\eta^1t(1+t)^2M\biggl(\frac{1/x^2}{t(1+t)}\biggr)^2dt.
\end{align}
The end of Section~\ref{sectionM} (and for
instance~\eqref{boundFprime1}) helps in showing that the relevant
error terms in evaluating $H_1$ and $H_2$ are $\Ocal((\log\log N)^2(\log N)^{\kappa-2})$, and we therefore find that
\begin{equation*}
  \frac{H}{C}
  =
  \bigl(\tfrac12\log N+\log x\bigr)^{\kappa-1}
  \biggl(\tfrac12\log N+\log x
  +c_0
  -\kappa\frac{4x^2}{\pi^2}K_1(x)
  +\kappa\frac{4x^4}{\pi^4}K_2(x)+o(1)\biggr)
\end{equation*}
so
\begin{equation*}
  \frac{2^\kappa H}{C(\log N)^{\kappa-1}}
  =\log N+2\biggl(c_0
  +\kappa\biggl(-\frac{4x^2}{\pi^2}K_1(x)
  +\frac{4x^4}{\pi^4}K_2(x)+\log x\biggr)\biggr)+o(1)
\end{equation*}
provided that
\begin{equation*}
  \forall  t\in(0,1],\quad
  1-\frac{2t(1+t)}{\pi^2}M\biggl(\frac{1/x^2}{t(1+t)}\biggr)\ge0.
\end{equation*}
Since $t(1+t)$ takes any and every values in $(0,2]$, this last
condition is
equivalent to
\begin{equation*}
  \forall  u\in(0,2],\quad
  \frac{\pi^2}{2u}\ge M\biggl(\frac{1/x^2}{u}\biggr)
\end{equation*}
i.e.
\begin{equation*}
  \forall  v\ge 1/(2x^2),\quad
  \frac{\pi^2x^2v}{2}\ge M(v).
\end{equation*}
We choose $x=1.25$ and find that
\begin{equation}
  \label{eq:15}
  x=1.25:\quad -\frac{4x^2}{\pi^2}K_1(x)
  +\frac{4x^4}{\pi^4}K_2(x)+\log x=-0.237\,30\cdots
\end{equation}

The constant for Brun-Titchmarsh becomes
$2(1.332582-0.238)=2.1891\cdots$ while
the constant for the prime twins becomes
$2(A_3-2\times0.238)=11.570\cdots$

\bibliographystyle{plain}

\end{document}